\documentclass[english,11pt,a4paper,leqno]{amsart}
\usepackage{amsmath,amstext, amsthm, amssymb}
\usepackage{latexsym, amsfonts, graphicx, xcolor}
\usepackage{mathrsfs}
\usepackage[latin1]{inputenc} 
\usepackage[T1]{fontenc} 
\usepackage{enumerate}
\usepackage{mathtools}
\usepackage[english]{babel} 
\usepackage{enumitem}
\usepackage{hyperref}

\makeatletter
\def\namedlabel#1#2{\begingroup
	#2%
	\def\@currentlabel{#2}%
	\phantomsection\label{#1}\endgroup
}

\makeatother

\hypersetup{
	colorlinks   = true, 
	urlcolor     = blue, 
	linkcolor    = blue, 
	citecolor   = red 
}

\numberwithin{equation}{section}

\newtheorem{thm}{Theorem}[section]
\newtheorem{prop}[thm]{Proposition}
\newtheorem{lem}[thm]{Lemma}
\newtheorem{coro}[thm]{Corollary} 

\newtheorem{conj}[thm]{Conjecture}
\newtheorem*{prop*}{Proposition}
\newtheorem*{thm*}{Theorem}

\theoremstyle{theorem}
\newtheorem*{warning*}{Warning}
\newtheorem*{rem*}{Remark}

\theoremstyle{definition} 

\newtheorem{rem}[thm]{Remark}

\newtheorem*{ex*}{Example}

\newtheorem*{nota*}{Notation}

\newcommand{\I}{\mathcal I}
\newcommand{\J}{\mathcal J}
\newcommand{\Q}{\overline{\mathbb Q}}
\newcommand{\C}{\mathbb{C}}
\newcommand{\Z}{\mathbb{Z}}

\newcommand{\K}{\mathbb{K}}

\newcommand{\OK}{{\mathbb Z_{\mathbb K}}}

\newcommand{\X}{\boldsymbol{X}}

\newcommand{\omeg}{{\boldsymbol \omega}}

\newcommand{\bbeta}{\boldsymbol \beta}

\newcommand{\p}{{\mathfrak{p}}}
\newcommand{\hgt}{{\rm ht}}

\newcommand{\val}{\operatorname{val}_{z}}

\title[A Liouville-type inequality for values of $M$-functions]{A  Liouville-type inequality for values of Mahler $M$-functions}

\author{Boris Adamczewski}
\address{
	Univ Lyon, Universit\'e Claude Bernard Lyon 1\\
	CNRS UMR 5208, Institut Camille Jordan, F-69622 Villeurbanne Cedex, France, and
Centre International de Rencontres Math\'ematiques (CIRM), 163 Av.\ de Luminy, 13009
Marseille, France}
\email{Boris.Adamczewski@math.cnrs.fr}

\author{Colin Faverjon}
\address{
	Univ Lyon, Universit\'e Claude Bernard Lyon 1\\
	CNRS UMR 5208, Institut Camille Jordan \\
	F-69622 Villeurbanne Cedex, France}
\email{colin.faverjon@math.cnrs.fr}
\date{\today}

\date{}
\thanks{}
\begin{document}
\begin{abstract}  
We establish a Liouville-type inequality for the values, at a common nonzero algebraic point, of arbitrary Mahler $M_q$-functions. As an application, we prove that no such value is a Liouville number, or even a $U$-number. This solves a long-standing problem in the field.  
 \end{abstract}

	\bibliographystyle{abbvr}
	\maketitle

\section{Introduction}\label{sec:introduction}

Let $q\ge 2$ be an integer, and let $\Q\subset\C$ denote the field of algebraic numbers. An $M_q$-function is a power series
$f(z)\in \Q[[z]]$
satisfying a functional equation of the form
\begin{equation}\label{eq:def:M-func}
a_0(z)f(z)+a_1(z)f(z^q)+\cdots+a_m(z)f(z^{q^m})=0\,,
\end{equation}
where $a_0,\ldots,a_m\in \Q[z]$ and $a_0a_m\neq 0$. We call $f$ an \emph{$M$-function} if it is an $M_q$-function for some integer $q\ge 2$.

Typical examples of $M$-functions are
\[
\sum_{n=0}^{\infty} z^{3^n},\qquad
\prod_{n=0}^\infty \frac{1}{1-z^{5^n}},\qquad
\sum_{n=0}^\infty \lfloor \log_2 n \rfloor z^n.
\]
Most importantly, generating series of automatic and regular sequences provide a large supply of examples; see, for instance, \cite{ABS}. 
For background on automatic sequences and their applications, we refer the reader to \cite{AS03}. 
An $M$-function is always meromorphic in the open unit disk, and the unit circle is a natural boundary unless the function is rational; see \cite{Ra92,BCR}.    
This explains why the assumption
$
0<|\alpha|<1$ 
appears implicitly throughout our results.

In this paper, we study Diophantine properties of the so-called $M$-values, that is, of elements of the set
\[
\mathbf M
:=
\{f(\alpha)\,:\, \alpha\in\Q,\ f \text{ is an } M\text{-function well-defined at } \alpha\}\, .
\]
Recall that a \emph{Liouville number} is an irrational real number $\xi$ such that, for every integer $w\ge 1$, there exists a rational number $p/q$ with
\[
\left|\xi-\frac{p}{q}\right|\le \frac{1}{q^w}\,\cdot
\]

Our initial motivation was to prove the following statement.

\begin{thm}{\, }
\label{thm:Lnumbers}
No $\xi\in \mathbf M$ is a Liouville number.
\end{thm}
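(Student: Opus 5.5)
The plan is to derive the theorem from a Liouville-type lower bound. Fix $\xi=f(\alpha)\in\mathbf M$, where $f$ is an $M_q$-function and $\alpha\in\Q$ with $0<|\alpha|<1$. The target estimate is: there is a constant $c=c(f,\alpha)>0$ such that for every polynomial $P\in\Z[X]$ of degree at most $d$ and height at most $H$ with $P(\xi)\neq 0$,
\[
|P(\xi)|\ge c^{\,d}\,H^{-C d}
\]
for some $C$ depending only on $f$ and $\alpha$. Applying this with $d=1$ and $P(X)=bX-a$ gives $|\xi-a/b|\ge c\,b^{-1}H^{-C}\ge c\,b^{-C'}$ whenever $\xi\neq a/b$. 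This is a finite irrationality measure, so $\xi$ cannot be Liouville. Allowing all degrees $d$ would also rule out $U$-numbers, as the abstract announces.

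To get the lower bound, I would reduce to a system. Write $f_i(z)=f(z^{q^i})$ for $0\le i<m$. These satisfy a $q$-Mahler system $Y(z)=A(z)Y(z^q)$ with $A\in GL_m(\Q(z))$. Any polynomial $P(f(z))$ of degree $d$ is then a linear form in the monomials of degree $\le d$ in $f_0,\dots,f_{m-1}$. These monomials form the solution vector of another Mahler system, namely the $d$-th symmetric power. The key tool is the purity and lifting theorem of Philippon and Adamczewski--Faverjon, a Nishioka-type result. It says that every $\Q$-linear relation among the values at $\alpha$ (for $\alpha$ regular) comes from a $\Q(z)$-linear relation among the functions, specialized at $\alpha$. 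So if $P(\xi)\neq0$, the corresponding function $R(z)=P(f(z))$ is nonzero, and we must bound $|R(\alpha)|$ from below in terms of $d$ and $H$.

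For that, I would iterate the functional equation. Choose $k$ so that $\alpha^{q^k}$ is tiny, and express $R(\alpha)$ through values at $\alpha^{q^{k}}$ using the matrices $A(\alpha)A(\alpha^q)\cdots$. Near $0$, the nonzero function $R$ has a bounded vanishing order $\le \mathrm{ord}(d)$, with a zero estimate polynomial in $d$, again from Nishioka. So $|R(\alpha^{q^k})|\gtrsim |\alpha|^{q^k\,\mathrm{ord}}$ times a leading coefficient bounded below by a Liouville (product-formula) estimate on an algebraic number of controlled height. We then transport this back to $\alpha$, using that the product of the $A$'s has algebraic entries of height $O(q^k)$, and we avoid singularities by starting at a large enough $k$.

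The main obstacle is uniformity. The constants must depend polynomially on $d$ and logarithmically on $H$, and $k$ must be chosen as roughly $\log\log H$ so that $q^k\asymp \log H$. For Liouville numbers only $d=1$ is needed, and then everything is linear in $(1,f_0,\dots,f_{m-1})$ with a fixed system. I would handle that case first, using the lifting theorem to guarantee nonvanishing of $R$ and a fixed order bound. The delicate step is keeping the loss from inverting $A(\alpha^{q^j})$ polynomial in $H$. I would control it via heights and Liouville's inequality applied to each factor.
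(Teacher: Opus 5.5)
There is a genuine gap: the step that is supposed to give the lower bound for $|R(\alpha)|$ does not work, and the idea that replaces it is missing.

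\textbf{The transport step.} Iterating the system gives $R(\alpha)=\Lambda_k\cdot Y(\alpha^{q^k})$, where $\Lambda_k$ is a $k$-dependent vector built from $a,b$ and the product $A(\alpha)A(\alpha^q)\cdots A(\alpha^{q^{k-1}})$. So what you must bound from below is the value at $\alpha^{q^k}$ of the function $S_k(z)=\Lambda_k\cdot Y(z)$. It is not $R(\alpha^{q^k})$: a lower bound for the fixed form $R$ at $\alpha^{q^k}$ says nothing about $R(\alpha)$.

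\textbf{Why the bound cannot close.} Bounding $|S_k(\alpha^{q^k})|$ through its bounded order $v$ at $0$ and Liouville's inequality for its leading Taylor coefficient $c_k$ fails in general.
\begin{itemize}
\item The coefficients of $\Lambda_k$ have height about $\log H+\kappa q^k$. Here $\kappa$ is roughly $s\,h(\alpha)/(q-1)$, with $h$ the logarithmic Weil height and $s$ bounding the degrees of the entries of $A$ after clearing denominators.
\item Liouville therefore only gives $\log|c_k|\ge-\delta(\log H+\kappa q^k)$, where $\delta$ is the degree of the number field.
\item The main term $|c_k||\alpha|^{vq^k}$ dominates the tail, which is of order $|\Lambda_k||\alpha|^{(v+1)q^k}$, only when roughly $\log(1/|\alpha|)>\delta\kappa$.
\end{itemize}
A bounded vanishing order gains a single factor $|\alpha|^{q^k}$. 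The arithmetic cost of passing from $\alpha$ to $\alpha^{q^k}$ is also linear in $q^k$, generally with a larger constant. Already for $\alpha=1/b$ the available bounds give the required inequality only when $q-1>s$. This is exactly why no argument of this kind settled the question.

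\textbf{The missing idea.} You need an auxiliary construction whose vanishing order grows faster than its degree.
\begin{itemize}
\item In the paper, Siegel's lemma gives $R_0(z,\X)$ of degree at most $N$ in $z$ and in $\X$ with $\val(R)\ge\kappa_1N^{t+1}$.
\item The multiplicity lemma (Lemma~\ref{lem:Multiplicity}) gives the matching upper bound $\val(R)=O(N^{t+1})$.
\item Iterating the system then yields $Q_{N,k}\in\Z[\X]$ with $\log|Q_{N,k}(\omeg_0)|\asymp-N^{t+1}q^k$ but only $\log H(Q_{N,k})\ll Nq^k$. For large $N$, the gain $N^{t+1}$ beats the cost $N$.
\end{itemize}
These polynomials involve all of $f_1(\alpha),\dots,f_m(\alpha)$, not only $\xi$. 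So one still needs elimination theory to turn them into a lower bound for $|P(\xi)|$: the arithmetic B\'ezout theorem (Proposition~\ref{prop:Bezout}) and the induction on height in Lemma~\ref{lem:Ideal_Mahler2}. Restricting to $d=1$ does not let you bypass this.

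\textbf{Two smaller points.}
\begin{itemize}
\item The implication $P(\xi)\neq0\Rightarrow R\neq0$ is trivial and does not need the lifting theorem.
\item Choosing $k$ large does not remove singularities. If, say, $a_0(\alpha)=0$, the equation gives no expression of $f(\alpha)$ through later values at all. The paper obtains a system for which $\alpha$ is regular from good Mahler operators (Proposition~\ref{prop:good_eq}), and that is where the lifting theorem actually enters.
\end{itemize}
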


The question addressed in Theorem~\ref{thm:Lnumbers} has a long history, which we briefly recalled in Section~\ref{sec:introbis}. Let us simply mention here that, in his first paper on the subject, published in 1929, Mahler \cite{Ma29} already observed, although without proof, that some of the numbers he had just proved to be transcendental should not be Liouville numbers. The corresponding statement for general elements of $\mathbf M$ seems to belong to the folklore of the subject, although it is difficult to trace a precise source.

Liouville's inequality was not only used, in its original form, to construct Liouville numbers and thereby provide the first examples of transcendental numbers, but also became, in its more general form, a fundamental and ubiquitous tool in transcendental number theory.   It asserts that, given 
$\alpha_1,\ldots,\alpha_m\in\Q$, there exists a positive real number $C_1$ such that, for every polynomial $P\in\Z[X_1,\ldots,X_m]$, one has
\[
\text{either } P(\alpha_1,\ldots,\alpha_m)=0 \quad
\text{or } \quad  |P(\alpha_1,\ldots,\alpha_m)|\ge H(P)^{-C_1}C_2(\deg(P)),
\]
where $C_2(d):=e^{-C_1d}$ and $C_1$ depends only on $\alpha_1,\ldots,\alpha_m$ (see, for instance, \cite[p.~84--85]{Wa_Liv}).  
Here $\deg(P)$ denotes the total degree of $P$, and $H(P)$ its height, that is, the maximum of the absolute values of its coefficients.

Theorem~\ref{thm:Lnumbers} is a straightforward consequence of the following Liouville-type inequality, in which the field $\Q$ is replaced by the ring
\begin{equation}\label{eq:mqalpha}
\mathbf M_{q,\alpha}
:=
\{f(\alpha)\,:\, f \text{ is an } M_q\text{-function well-defined at } \alpha\}\,, 
\end{equation}
where the parameters $q\geq 2$ and $\alpha\in\Q$ are fixed. 
This ring contains $\Q$ and its field of fractions has infinite transcendence degree.

\begin{thm}
\label{thm:Liouville}
Let $\xi_1,\ldots,\xi_m\in \mathbf M_{q,\alpha}$. Then there exist two positive real numbers $C_1$ and $\tau$ such that, for every polynomial $P\in\Z[X_1,\ldots,X_m]$, one has the following alternative:
\[
\text{either } P(\xi_1,\ldots,\xi_m)=0\,,
\]
or
\[
|P(\xi_1,\ldots,\xi_m)|
\ge
H(P)^{-C_1\deg(P)^\tau}C_2(\deg(P))\,,
\]
where 
$
C_2(d):=e^{-C_1d^{2\tau+2}}$, 
and where $C_1$ and $\tau$ depend only on $\xi_1,\ldots,\xi_m$.
\end{thm}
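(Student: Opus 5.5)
We reduce to a single $M_q$-function system, apply Nishioka/Philippon-type algebraic independence results, and control the degree and height of the polynomial relations.

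\emph{Reduction to one linear system.} Each $\xi_i=f_i(\alpha)$ with $f_i$ an $M_q$-function. The functions $f_i(z^{q^k})$, $k\ge 0$, span a finite-dimensional $\Q(z)$-space. Collecting bases for all $i$ gives a vector $\boldsymbol f=(f_1,\dots,f_n)^{\top}$ satisfying a system $\boldsymbol f(z)=A(z)\boldsymbol f(z^q)$ with $A\in{\rm GL}_n(\Q(z))$. Replacing $\alpha$ by $\alpha^{q^k}$ for $k$ large, we may assume that $\alpha^{q^j}$ avoids the poles of $A$ and $A^{-1}$ for all $j\ge0$. After this change, each $\xi_i$ is a fixed polynomial, with algebraic coefficients, in the coordinates of $\boldsymbol f(\alpha^{q^k})$ and in finitely many algebraic numbers. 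Composing $P$ with this map changes $\deg P$ by a bounded factor and $\log H(P)$ by $O(\deg P)$. It therefore suffices to treat the case $P(\boldsymbol f(\alpha))$, with $P$ having coefficients in a fixed number field $\K$.

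\emph{Reduction to transcendence-degree data.} By the Philippon--Adamczewski--Faverjon lifting theorem, every algebraic relation among the $f_i(\alpha)$ is the specialization of a functional relation over $\Q(z)$. Let $\mathfrak P\subset \Q(z)[\X]$ be the ideal of functional relations, and let $t$ be the transcendence degree. We fix polynomials $Q_1,\dots,Q_s\in\Q[z,\X]$ that generate $\mathfrak P$ and a transcendence basis $f_1,\dots,f_t$. If $P(\boldsymbol f(\alpha))\ne 0$, then $P(\boldsymbol f(z))\ne0$. By the standard elimination argument, $P$ has a ``norm'' $R=N(P)\in\Q[z][X_1,\dots,X_t]$ relative to the finite extension $\Q(z)(f_1,\dots,f_t)\subset\Q(z)(\boldsymbol f)$. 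This norm satisfies $\deg R\ll \deg P$, $h(R)\ll h(P)+\deg P$, and $R=P\cdot U$ modulo $\mathfrak P$, where $U$ is controlled. This reduces the problem to lower bounds for $|R(z,f_1(\alpha),\dots,f_t(\alpha))|$ with $f_1,\dots,f_t$ algebraically independent over $\Q(z)$, together with an upper bound for $|U(\boldsymbol f(\alpha))|$. The upper bound on $U$ is trivial.

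\emph{Core estimate.} With $f_1,\dots,f_t$ algebraically independent, we need a quantitative measure of algebraic independence of Nishioka type. Nishioka's theorem gives, for $P$ of degree $d$ and height $H$,
\[
|P(f_1(\alpha),\dots,f_t(\alpha))|\ge \exp\bigl(-c\,d^{\,\tau'}(\log H + d)\bigr),
\]
where $\tau'$ depends on $t$. We reprove this as follows. We construct auxiliary polynomials $E(z,\X)$ of degree $N$ in $\X$ and degree $\asymp N^{t+1}/d^{t}$ in $z$ whose specializations vanish at $z=0$ to high order. A Nesterenko multiplicity estimate for Mahler systems, which rests on the functional independence, provides the zero estimate. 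We iterate $z\mapsto z^{q}$ through $\boldsymbol f(\alpha^{q^k})=A(\alpha^{q^k})^{-1}\cdots$ to obtain many small values $|E(\alpha^{q^k},\boldsymbol f(\alpha^{q^k}))|$. Philippon's criterion, in its ideal-theoretic form via Chow forms, then converts these small values, together with the smallness of $P$, into a contradiction whenever $|P(\boldsymbol f(\alpha))|$ is below the claimed bound. Tracking degrees shows the exponent is polynomial in $d$ with $\log H$ entering linearly. This gives the shape $H^{-C_1d^\tau}e^{-C_1d^{2\tau+2}}$ after absorbing the losses from the norm step.

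\emph{Main obstacle.} The main obstacle is the zero/multiplicity estimate, which must be uniform in the functional system. Uniformity is needed because we pass to the transcendence basis over $\Q(z)$ rather than working with the original generators. I would first try the Nishioka multiplicity lemma, using $q$-invariance of the ideal generated by the solutions. A secondary danger is the norm step when $\mathfrak P$ is not prime. This is handled by choosing a prime component containing $P(\boldsymbol f)$'s annihilator, via the irreducibility of the Zariski closure of the orbit of $\boldsymbol f$.
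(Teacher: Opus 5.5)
Your overall architecture (put everything into one Mahler system at a regular point, then prove a measure that tolerates $t<m$) matches the paper's. However, both load-bearing steps are only asserted.

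The first is the regular-point reduction. The only identity at hand is $\boldsymbol f(z)=A(z)A(z^q)\cdots A(z^{q^{k-1}})\boldsymbol f(z^{q^k})$. When $A$ has a pole at some $\alpha^{q^j}$ with $j<k$, this cannot be evaluated at $z=\alpha$, and you give no reason why the rows producing the $f_i$ should be regular at $\alpha$. So the claim that each $\xi_i$ is a polynomial in $\boldsymbol f(\alpha^{q^k})$ is unsupported. This is exactly the obstruction the paper isolates (Theorem~\ref{thm:measure_system} needs $\alpha$ regular) and removes with good operators (Proposition~\ref{prop:good_eq}). There, each $f_i$ satisfies a $q^\ell$-Mahler equation for which $\alpha$ itself is regular, and the block-diagonal companion system has the $f_i$ as coordinates. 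Your step can be rescued. Take for $\boldsymbol f$ a $\Q(z)$-basis of the span of all $f_i(z^{q^j})$ and write $f_i(z)=c_i(z)\cdot\boldsymbol f(z^{q^k})$ with $c_i\in\Q(z)^n$. A pole of $c_i$ at $\alpha$ would, through its leading coefficient, give a nontrivial $\Q$-linear relation among the coordinates of $\boldsymbol f(\alpha^{q^k})$, which the lifting theorem at the regular point $\alpha^{q^k}$ forbids. But this uses the lifting theorem in an essential way, just as Proposition~\ref{prop:good_eq} does, and it is absent from your argument.

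The second gap is more serious: the passage to a transcendence basis $f_1,\ldots,f_t$ does not connect to any available estimate. Nishioka's quantitative theorem requires the coordinates of the Mahler system to be algebraically independent. Any system having $f_1,\ldots,f_t$ among its coordinates will in general have more than $t$ coordinates, which is precisely the excluded situation. In fact, the measure you need for $(f_1(\alpha),\ldots,f_t(\alpha))$ is the special case of Theorem~\ref{thm:measure_system} recorded in the remark following it, so the reduction is circular. Your sketched re-proof does not break the circle. It manufactures small values by transporting auxiliary polynomials through the full matrix $A$, which produces polynomials in all $n$ coordinates, not in $X_1,\ldots,X_t$. The norm $R$ and your auxiliary polynomials therefore never live in the same space. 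You would also still have to justify $R(\alpha,f_1(\alpha),\ldots,f_t(\alpha))\neq 0$, since $U$ is only a rational expression in $z$ and $\boldsymbol f$.

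The missing idea is to stay in $\mathbb P^m$ with all coordinates and let $t$ enter through an ideal:
\begin{itemize}
\item The prime ideal $\p$ of $\omeg_0=(1,f_1(\alpha),\ldots,f_m(\alpha))$ has $\hgt(\p)=m-t$ by Nishioka's qualitative theorem.
\item The arithmetic B\'ezout theorem (Proposition~\ref{prop:Bezout}), applied to $\p$ and the homogenization of $P$, gives an unmixed ideal $\I$ of height $m+1-t$ with $\log|\I(\omeg_0)|\le\frac12\log|P(f_1(\alpha),\ldots,f_m(\alpha))|+O(\deg P+\log H(P))$.
\item The lower bound of Lemma~\ref{lem:Ideal_Mahler} for all unmixed ideals of height at least $m+1-t$ is then proved by induction on the rank (Lemma~\ref{lem:Ideal_Mahler2}).
\item That induction uses Pad\'e approximants built on $1,f_1,\ldots,f_t$ only but transported by the full matrix $B(z)$. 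Their values at $\omeg_0$ are bounded from below thanks to the refined multiplicity lemma $\val(R)=\mathcal O(MN^t)$ (Lemma~\ref{lem:Multiplicity}).
\end{itemize}
Your sentence invoking Philippon's criterion is where all of this has to happen, and none of it is supplied.
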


Theorem~\ref{thm:Liouville} implies the more general statement that $\mathbf M$ contains no $U$-number in the sense of Mahler's classification; see Section~\ref{sec: mahler}, Corollary~\ref{coro:U_numbers}. Theorem~\ref{thm:Liouville} will be proved using tools from elimination theory, first introduced in this framework by Nesterenko~\cite{Ne85} and later used by Becker~\cite{Bec91} and Nishioka~\cite{Ni91}. The new input, as compared with previous results of Becker and Nishioka, lies in the role played by the ideal of homogeneous relations over $\Q(z)$ among the underlying $M_q$-functions, together with the existence of \emph{good Mahler operators}. Most of the results proved in this paper were already contained in the unpublished preprint~\cite{AF25}.

The paper is organized as follows. In Section~\ref{sec:introbis}, we briefly review the state of the art surrounding Theorem~\ref{thm:Lnumbers} and state a more precise form of Theorem~\ref{thm:Liouville}, namely Theorem~\ref{thm:measure_sans_system}. We also state the more restrictive Theorem~\ref{thm:measure_system}, which concerns $M_q$-functions related by a linear Mahler system and evaluated at regular points, and is one of the key ingredients in the proof of Theorem~\ref{thm:Liouville}. In Section~\ref{sec:M_func}, we recall the necessary background on elimination theory and prove Theorem~\ref{thm:measure_system}. We then deduce Theorem~\ref{thm:measure_sans_system} in Section~\ref{sec:remove_sing}. In Section~\ref{sec: mahler}, we recall Mahler's classification and prove Corollary~\ref{coro:U_numbers}. Finally, in Section~\ref{sec: last}, we give an application \emph{\`a la Liouville} of our main result, showing that sufficiently lacunary series evaluated at $M$-values are transcendental.


\section{Statement of Theorems~\ref{thm:measure_system} and \ref{thm:measure_sans_system}}\label{sec:introbis}

Let us briefly review the state of the art surrounding Theorems~\ref{thm:Lnumbers} and \ref{thm:Liouville}. 
After the pioneering work of Mahler~\cite{Ma29}, several special cases of Theorem~\ref{thm:Lnumbers} were later established explicitly. In 1980, Galoch\-kin~\cite{Gal80} proved the result for values of $M$-functions satisfying a homogeneous or inhomogeneous Mahler equation of order~$1$. In 1993, Shallit conjectured that no automatic irrational real number is a Liouville number; this conjecture later appeared in print in \cite{Sh99}. Adamczewski and Cassaigne  \cite{AC06} proved Shallit's conjecture in 2006, using an approach inspired by  \cite{ABL,AB07} rather than Mahler's method. Finally, in 2015, Bell, Bugeaud, and Coons extended this result to values at points of the form $1/b$ of arbitrary Mahler functions with rational coefficients.

On the other hand, techniques from elimination theory, introduced in this area by Nesterenko \cite{Ne85}, have proved to be a very powerful tool for obtaining strong measures of algebraic independence for values of $M$-functions.  
Assume that the following conditions are satisfied: 
\begin{enumerate}[label=(\roman*)]

\item \label{Assump_i}
The vector
$
(f_1=f,\ldots,f_m)\in \Q[[z]]^m
$ 
is a solution of a linear Mahler system of the form
\begin{equation}\label{eq:mahlersystem}
Y(z^q)=A(z)Y(z),
\qquad
A(z)\in {\rm GL}_m(\Q(z)).
\end{equation}

\medskip

\item \label{Assump_ii}
The functions $f_1,\ldots,f_m$ are algebraically independent over $\Q(z)$.

\medskip

\item \label{Assump_iii}
The point $\alpha$ is a \emph{regular point} for  \eqref{eq:mahlersystem} meaning that
$A(z)$ is defined and invertible at $\alpha^{q^k}$ for every integer $k\ge 0$.

\end{enumerate}

\medskip

\noindent Then there exist a positive real number $C_1$ and, for every integer $d$, a positive real number $C_2(d)$ such that
\[
|P(f_1(\alpha),\ldots,f_m(\alpha))|
\ge
H(P)^{-C_1\deg(P)^m}C_2(\deg(P))
\]
for every nonzero polynomial $P\in \Z[X_1,\ldots,X_m]$. Here $\deg(P)$ denotes the total degree of $P$, and $H(P)$ its height, that is, the maximum of the absolute values of its coefficients.

Such a bound immediately implies that $f(\alpha)$ is not a Liouville number. 
Results of this type were first obtained by Becker~\cite{Bec91} and later refined by Nishioka~\cite[Thm.~1]{Ni91}, where she established that one may take
\[
C_2(d)=e^{-C_1d^{2m+2}}.
\]
In principle, the constant $C_1$ can be made effective, although no explicit bound seems to have been worked out in the literature. 

More generally, algebraic independence measures have been established in broader settings, for instance for certain $M$-functions that do not necessarily satisfy Assumption~\ref{Assump_ii}. In this direction, Philippon~\cite[Thm.~6]{Ph98} obtained an algebraic independence measure for linear Mahler systems with polynomial coefficients. In 2013, Zorin~\cite{Zo13} released a preprint on certain generalizations of $M$-functions and on algebraic independence measures for their values. In the original version, the main result concerning $M$-functions was incorrect. After a discussion with the first author, Zorin released revised versions of his preprint in 2016 and 2017, in which he announced, in particular, Corollary~\ref{coro:U_numbers}. His general result also implies an algebraic independence measure similar to Theorem~\ref{thm:measure_system}, although weaker than the one obtained here. However, this revised version was never published in a refereed journal, so the status of its proof remains unclear.
 In her PhD thesis, Fernandes~\cite{Fe19} proposed a result in the spirit of Theorem~\ref{thm:measure_system}, with weaker quantitative bounds, relying on an unpublished result of Jadot~\cite{Ja96}.

In order to prove Theorem~\ref{thm:Lnumbers} and Theorem~\ref{thm:Liouville}, we follow the approach based on elimination theory. Our first  result is a generalization of Nishioka's theorem in which condition~\ref{Assump_ii} is removed. The trade-off is that one can no longer exclude the possibility that
\[
P(f_1(\alpha),\ldots,f_m(\alpha))=0\,.
\] 
Accordingly, the result takes the form of an alternative, such as in  Liouville's inequality and Theorem~\ref{thm:Liouville}. 

\begin{thm}
\label{thm:measure_system}
Let $q\ge 2$, $m\ge 1$ be  integers, and let $f_1,\ldots,f_m\in\Q[[z]]$ be related by a Mahler system
\[
Y(z^q)=A(z)Y(z)\,,
\qquad
A(z)\in {\rm GL}_m(\Q(z))\,.
\]
Let $\alpha\in\Q$ with $0<|\alpha|<1$, and assume that $\alpha$ is regular for this system. Then there exists a positive real number $C_1$ such that, for every polynomial $P\in\Z[X_1,\ldots,X_m]$, one has the following alternative:
\[
\text{either } P(f_1(\alpha),\ldots,f_m(\alpha))=0,
\]
or
\[
|P(f_1(\alpha),\ldots,f_m(\alpha))|
\ge
H(P)^{-C_1\deg(P)^t}C_2(\deg(P))\,,
\]
where
\[
t:={\rm tr.deg}_{\Q(z)}(f_1,\ldots,f_m)
\qquad\text{and}\qquad
C_2(d):=e^{-C_1d^{2t+2}}.
\]
\end{thm}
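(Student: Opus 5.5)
The plan is to reduce to Nishioka's theorem by passing to the variety of relations, and then to run her elimination-theoretic argument on that variety instead of on affine space. Let $\mathfrak P\subset \Q(z)[X_0,\ldots,X_m]$ be the homogeneous ideal of relations over $\Q(z)$ among $1,f_1,\ldots,f_m$. It is prime of dimension $t+1$ (projective dimension $t$). Since $(f_i)$ satisfies $Y(z^q)=A(z)Y(z)$, the ideal $\mathfrak P$ is stable under the twisted substitution $X\mapsto A(z)X$ combined with $z\mapsto z^q$. Clearing denominators gives a prime ideal $\mathfrak P_0$ over $\Q[z]$.

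First I would specialize at $\alpha^{q^k}$. The goal is a good Mahler operator: replace $q$ by a power $q^r$ and $A$ by the iterated matrix $A_r(z)=A(z^{q^{r-1}})\cdots A(z)$, and show that for $k$ large the specialization $\mathfrak P(\alpha^{q^k})$ of $\mathfrak P_0$ at $z=\alpha^{q^k}$ is still an ideal of dimension $t+1$. It defines the Zariski closure of the point $\boldsymbol\xi_k=(1,f_1(\alpha^{q^k}),\ldots,f_m(\alpha^{q^k}))$ up to finitely many components, all of controlled degree and height. Regularity of $\alpha$ ensures that $A_r(\alpha^{q^k})$ is invertible, so the map $X\mapsto A_r X$ transports these specialized varieties between levels $k$ and $k+r$ and preserves degrees.

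Next, with $P$ given and $P(\boldsymbol\xi_0)\neq 0$, I would separate two cases. If $P\in \mathfrak P$ after homogenizing, then $P(\boldsymbol\xi_0)=0$, which is the first alternative. Otherwise $P$ is not in $\mathfrak P$. Then, following Nesterenko--Nishioka, I would build an auxiliary polynomial $R\in\Q[z][X]$ of degree $N$ in $X$, small modulo $\mathfrak P$ at $z=0$ (Siegel's lemma in the quotient ring $\Q[z][X]/\mathfrak P_0$, whose Hilbert function is $\asymp N^{t}$, which is why the exponent $t$ replaces $m$). Transporting by the functional equation gives polynomials $R_k$ with $|R_k(\boldsymbol\xi_0)|$ tiny, while $R_k\notin\mathfrak P(\alpha^{q^k})$ by a zero estimate. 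The zero estimate is the key input: I would use Nishioka's multiplicity estimate relative to $\mathfrak P$, available because $\mathfrak P$ is Mahler-stable. Then I would apply Philippon's criterion, i.e. Nesterenko's elimination on the $t$-dimensional variety $V(\mathfrak P(\alpha))$ intersected with $V(P)$. The unmixed ideal $(\mathfrak P(\alpha),P)$ has dimension $t$ and does not contain $\boldsymbol\xi_0$. Cutting successively with the $R_k$ and taking Chow forms reduces the dimension down to $0$, and the resulting nonzero algebraic number has absolute value at least $H^{-\deg^{\cdot}}$ by Liouville's inequality. The degree/height bookkeeping over $t$ steps yields $H(P)^{-C_1 d^t}e^{-C_1 d^{2t+2}}$.

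The main obstacle I expect is the specialization step. The property $P\notin\mathfrak P$ over $\Q(z)$ does not immediately give that $P$ avoids the relevant components of $\mathfrak P(\alpha)$ containing $\boldsymbol\xi_0$, and $\mathfrak P(\alpha)$ may fail to be prime or may acquire extra components. To handle this, I would first prove, using the good Mahler operator, that for suitable $k$ the only component of the specialized variety passing through $\boldsymbol\xi_k$ is the one corresponding to the generic point. This would use a lifting result of Philippon/Nishioka type (values at regular points have transcendence degree $t$). I would then work at $\alpha^{q^k}$ and pull back via $A_r$, which is harmless since the matrix is invertible at every $\alpha^{q^j}$.
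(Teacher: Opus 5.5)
Your architecture matches the paper's: cut the $t$-dimensional variety through $\omeg_0$ by $V(P)$, then eliminate using auxiliary polynomials built from a transcendence basis and controlled by a multiplicity estimate with exponent $t$. Your Siegel-lemma construction is equivalent to the paper's Pad\'e approximant in $1,f_1,\dots,f_t$. However, the step you single out as the main obstacle is an unnecessary detour, and the step that actually carries the proof is missing.

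The paper never specializes $\mathfrak P$, never iterates the system, and uses no good Mahler operator here; those only serve to deduce Theorem~\ref{thm:measure_sans_system}. It takes $\p\subset\Z[\X]$ to be the prime ideal of homogeneous polynomials vanishing at $\omeg_0$. Its degree and height are constants, $\hgt(\p)=m-t$ by Nishioka's theorem, and ${\rm dist}(\omeg_0,\p)=|\p(\omeg_0)|=0$. Apply the arithmetic B\'ezout theorem to $\p$ and the homogenization of $P$, with $X=-\log|P(\omeg_0)|$ and $\sigma=1$. This directly produces an unmixed ideal $\I$, not the ideal $(\mathfrak P(\alpha),P)$, which need be neither unmixed nor of the right dimension. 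Writing $d=\deg(P)$, it satisfies $\hgt(\I)=m+1-t$, $\deg(\I)\ll d$, $\log H(\I)\ll d+\log H(P)$ and $\log|\I(\omeg_0)|\le\tfrac12\log|P(\omeg_0)|+O(d+\log H(P))$. Also, your dichotomy must be $P(\omeg_0)=0$ versus $P(\omeg_0)\neq 0$, since a polynomial outside $\mathfrak P$ can still vanish at $\omeg_0$.

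The gap is the descent. You need $\log|\I(\omeg_0)|\ge-\lambda D^t\log H$ for every unmixed $\I$ of height at least $m+1-t$ (Lemma~\ref{lem:Ideal_Mahler}), and ``cutting successively with the $R_k$'' does not give it. After the first cut the components are proper subvarieties not containing $\omeg_0$. Knowing that $R_k$ does not vanish identically on the specialized variety says nothing about whether it vanishes on them, so a qualitative zero estimate is the wrong tool.

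What is needed is the following quantitative mechanism:
\begin{enumerate}
\item A two-parameter family $Q_{N,k}$ with $\deg\ll N$, $\log H\ll Nq^k$ and $-\gamma_5N^{t+1}q^k\le\log|Q_{N,k}(\omeg_0)|\le-\gamma_6N^{t+1}q^k$ whenever $N\ge\gamma_1$ and $q^k\ge\gamma_2N^{t+1}$. The lower bound is exactly what the multiplicity estimate $\val(R)=O(N^{t+1})$ provides.
\item At each step, extract a prime component $\p$ very close to $\omeg_0$ via Proposition~\ref{prop:decompo_primaire}.
\item Choose $(N,k)$ so that $-\log|Q_{N,k}(\omeg_0)|$ lies within a bounded factor below $\min\{X,-\tfrac12\log{\rm dist}(\omeg_0,\p)\}$. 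This is the hypothesis of the arithmetic B\'ezout theorem, and it is what forces $Q_{N,k}\notin\p$.
\end{enumerate}
Closing the estimates requires an induction by contradiction on $r=m+1-\hgt(\I)$, with $N\approx\lambda_1^{1/(2t+2)}D_1^{1/(t-r+1)}$ and the calibrated exponents $\nu(r),\mu(r),\tau(r)$.

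Finally, the exponent $2t+2$ is not mere bookkeeping. This scheme only gives $e^{-C_1d^{3t+3}}$ (Theorem~\ref{thm:measure_1st_regime}). The sharp bound needs a second variant (Lemma~\ref{prop:majo_ideal_bis}) with a free parameter $\delta$ and auxiliary polynomials with $q^k\asymp s^{\delta-1}$. It is used when $\log H(P)$ is small compared with $d$.
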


\begin{rem}
Theorem~\ref{thm:measure_system} has the following consequence: if $i_1,\ldots,i_r\in\{1,\ldots,m\}$ are such that
$
f_{i_1}(\alpha),\ldots,f_{i_r}(\alpha)$ 
are algebraically independent, then
\[
|P(f_{i_1}(\alpha),\ldots,f_{i_r}(\alpha))|
\ge
H(P)^{-C_1\deg(P)^t}C_2(\deg(P))
\]
for every polynomial $P\in\Z[X_1,\ldots,X_r]$. 
 Moreover, if $t=m$, then the non-quantitative version of Nishioka's theorem \cite[Thm.~4.2.1]{Ni_Liv} implies that $f_1(\alpha),\ldots,f_m(\alpha)$ are algebraically independent.
In this case, Theorem~\ref{thm:measure_system} recovers the quantitative version of Nishioka's theorem \cite[Thm.~1]{Ni91} as a special case. 
 Recently, the authors  \cite{AF23,AF24_Annals} gave a new proof of \cite[Thm.~4.2.1]{Ni_Liv}  by a different method. That approach should also yield an analogue of Theorem~\ref{thm:measure_system}, albeit with weaker quantitative bounds.  
\end{rem}

Theorem~\ref{thm:measure_system} alone is not sufficient to derive Theorem~\ref{thm:Lnumbers}, since it requires the point 
$\alpha$ to be regular for the underlying Mahler system. The key to overcoming this difficulty lies in the existence of suitable \emph{good operators} attached to arbitrary $M$-functions. More precisely, all we need is that, for every algebraic point $\alpha$ and every $M$-function $f$ that is well-defined at $\alpha$, there exists a Mahler operator for which $f$ is a solution and for which $\alpha$ is a regular point. The existence of such operators was established recently by the authors in \cite{AF24_EM}. We shall deduce from this the following result, from which Theorem~\ref{thm:Liouville} follows immediately.

\begin{thm}
	\label{thm:measure_sans_system} 
	 Let $r\geq 1$ be an integer and $f_1,\ldots,f_r \in \Q[[z]]$ be $M_q$-functions for some $q\in \mathbb N_{\geq 2}$. Let $\alpha \in \Q$,  $0<\vert \alpha \vert < 1$, such that  $f_1,\ldots,f_r$ are well-defined at $\alpha$. Set
	$$\tau = {\rm tr.deg}_{\Q(z)}(f_i(z^{q^\ell}) \,:\, 1\leq i\leq r,\; \ell \geq 0)\,.$$
Then, there exists a positive real number $C_1$ such that the 
following alternative holds  for all polynomials $P\in\mathbb Z[X_1,\ldots,X_r]$:  
	\begin{eqnarray*}	
	\text{ either } & P(f_1(\alpha),\ldots,f_r(\alpha))=0 \\
\text{ or } & \vert P(f_1(\alpha),\ldots,f_r(\alpha)) \vert \geq H(P)^{-C_1\deg(P)^\tau}C_2(\deg(P)) \,.
	\end{eqnarray*}  
where $C_2(d)=e^{-C_1d^{2\tau+2}}$.
\end{thm}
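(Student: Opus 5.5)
We deduce Theorem~\ref{thm:measure_sans_system} from Theorem~\ref{thm:measure_system}, using the good Mahler operators of \cite{AF24_EM} to get around the regularity assumption.

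\emph{Step 1: a single Mahler system.} For each $i$, the results of \cite{AF24_EM} give a Mahler equation of order $m_i$,
\[
a_{i,0}(z)y(z)+\cdots+a_{i,m_i}(z)y(z^{q^{m_i}})=0,
\]
satisfied by $f_i$ and for which $\alpha$ is regular. In other words, the companion matrix $A_i(z)$ is defined and invertible at every $\alpha^{q^k}$, $k\ge 0$. Its solution vector is
\[
Y_i=\bigl(f_i(z),f_i(z^q),\ldots,f_i(z^{q^{m_i-1}})\bigr)^{\!\top},
\]
and it satisfies $Y_i(z^q)=A_i(z)Y_i(z)$. Stacking these gives a block-diagonal system $Y(z^q)=A(z)Y(z)$ with $A={\rm diag}(A_1,\ldots,A_r)\in{\rm GL}_m(\Q(z))$, where $m=\sum_i m_i$. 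The point $\alpha$ is regular for this system, because each block is regular at $\alpha$. The coordinates of $Y$ are the functions $f_i(z^{q^\ell})$ with $0\le \ell<m_i$. In particular $f_1,\ldots,f_r$ occur among them.

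\emph{Step 2: transcendence degree.} Set $t={\rm tr.deg}_{\Q(z)}$ of the coordinates of $Y$. We need $t=\tau$. The inequality $t\le\tau$ is clear. Conversely, $Y(z^{q^k})=A(z^{q^{k-1}})\cdots A(z)Y(z)$, so every $f_i(z^{q^\ell})$ lies in $\Q(z)(Y)$. Hence $t=\tau$.

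\emph{Step 3: apply Theorem~\ref{thm:measure_system}.} Given $P\in\Z[X_1,\ldots,X_r]$, view it as a polynomial $\tilde P\in\Z[X_1,\ldots,X_m]$ in the coordinates of $Y$, substituting $X_i$ at the slot of $f_i(z)$. Then $\deg\tilde P=\deg P$, $H(\tilde P)=H(P)$, and
\[
\tilde P(Y(\alpha))=P(f_1(\alpha),\ldots,f_r(\alpha)).
\]
Theorem~\ref{thm:measure_system} then gives the alternative with exponent $t=\tau$, which is exactly the claim.

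\emph{Main obstacle.} The only delicate input is Step~1: we need an equation for $f_i$ whose associated system is regular at $\alpha$. The value $f_i(\alpha)$ must also be the actual coordinate of $Y(\alpha)$, which is automatic since each $f_i$ is well-defined at $\alpha$. Getting such an equation is precisely the content of the good operators theorem of \cite{AF24_EM}. One must check that it provides regularity at all points $\alpha^{q^k}$, not just nonvanishing of $a_{i,0}a_{i,m_i}$ at $\alpha$. If it does not, I would try to build the required equation from a minimal-order equation by multiplying it by suitable Mahler operators. These would be chosen to clear the finitely many singularities among the $\alpha^{q^k}$, which are finite in number because $|\alpha^{q^k}|\to0$ and the coefficient polynomials have finitely many zeros.
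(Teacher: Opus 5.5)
Your Step~1 asks for something that does not exist in general: there need not be any $q$-Mahler equation for $f_i$ for which $\alpha$ is regular. Neither the good-operator theorem nor your fallback can produce one.

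Suppose $\alpha$ is regular for a system $Y(z^q)=A(z)Y(z)$. Write $Y(z)=A(z)^{-1}A(z^q)^{-1}\cdots A(z^{q^{K-1}})^{-1}Y(z^{q^K})$ with $K$ large. This shows that every coordinate of $Y$ is analytic at \emph{every} point $\alpha^{q^j}$, $j\ge 0$. The hypothesis, however, only says that (the meromorphic continuation of) $f_i$ is well-defined at $\alpha$, and this does not propagate to $\alpha^q,\alpha^{q^2},\ldots$. For instance, take $q=2$ and $\alpha=1/2$. The $M_2$-function $f(z)=\frac{1}{1-4z}\sum_{n\ge 0}z^{2^n}$ is well-defined at $1/2$ but has a pole at $\alpha^2=1/4$.

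So the obstruction is a genuine pole of $f_i$, not a removable singularity of the operator. Multiplying by further $q$-Mahler operators, as in your fallback, cannot remove it. For the same reason, your claim that $Y(\alpha)$ is automatically meaningful only covers the coordinate $f_i(z)$. The coordinates $f_i(z^{q^\ell})$ with $\ell\ge 1$ require $f_i$ to be defined at $\alpha^{q^\ell}$.

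The missing idea, which is how the paper proceeds (Proposition~\ref{prop:good_eq}), is to change the base of the Mahler operator. Choose $\ell$ such that $|\alpha^{q^\ell}|$ is smaller than the radii of convergence of all the $f_i$, and view each $f_i$ as an $M_{q^\ell}$-function. Then $f_i$ is well-defined at $\alpha$ by hypothesis, and at every $\alpha^{q^{\ell k}}$ with $k\ge 1$ by convergence. The argument of \cite{AF24_EM} then yields a $q^\ell$-Mahler equation for $f_i$ for which $\alpha$ is regular.

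With this change, your Steps~1 and~3 go through verbatim. You use the block-diagonal $q^\ell$-system, whose coordinates are the $f_i(z^{q^{\ell k}})$ for $0\le k<m_i$, and apply Theorem~\ref{thm:measure_system} with $q^\ell$ in place of $q$. The only adjustment is in Step~2: this system need not generate all the $f_i(z^{q^j})$, so you only get $t\le\tau$ rather than $t=\tau$. That is enough, since for $d\ge 1$ and $H(P)\ge 1$ one has $H(P)^{-C_1d^t}e^{-C_1d^{2t+2}}\ge H(P)^{-C_1d^\tau}e^{-C_1d^{2\tau+2}}$.
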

\begin{rem}\label{rem:tau}
 If for every $i$, $1\leq i\leq r$, we let  $m_i$ denote the order of the minimal linear Mahler equation satisfied by $f_i$, then we have the inequality 
\begin{equation*}
\tau \leq m_1+\cdots +m_r \,.
\end{equation*} 
\end{rem}


\section{Proof of Theorem~\ref{thm:measure_system}}\label{sec:M_func}

To establish Theorem~\ref{thm:measure_system}, we follow the approach of Nesterenko~\cite{Ne85}, Becker~\cite{Bec91}, and Nishioka~\cite{Ni91}, which is based on elimination theory. For the reader's convenience, we adhere closely to the exposition in Nishioka's book \cite[Chap.~4]{Ni_Liv}.

By convention, we set $\log(0) = -\infty$. With this convention, all equalities and inequalities involving logarithms of non-negative real numbers in this section remain valid even when the argument of the logarithm is zero.

\subsection{Basics of Elimination Theory}

Given a prime ideal $\I$, we let $\hgt(\I)$ denote its height, that is, the largest integer $h$ for which there exist prime ideals 
$\mathfrak p_0,\ldots,\mathfrak p_h$ satisfying
\[
\mathfrak p_0 \subsetneq \mathfrak p_1 \subsetneq \cdots \subsetneq \mathfrak p_h = \I \,.
\]
The height of a possibly non-prime ideal $\I$ is defined by
\[
\hgt(\I):=\min\{\hgt(\mathfrak p) : \mathfrak p \text{ prime and } \I\subseteq \mathfrak p\}\,.
\]

Let $\X=(X_0,\ldots,X_m)$ be a tuple of indeterminates. For an ideal
$\I\subset \Z[\X]$, we denote by
\[
\I_{\mathbb Q}:=\I\,\mathbb Q[\X]\simeq \I\otimes_{\Z}\mathbb Q
\]
its extension to $\Q[\X]$. We shall be interested in homogeneous ideals
$\I\subset \Z[\X]$ such that $\I_{\mathbb Q}$ is a proper ideal of $\mathbb Q[\X]$. By extension, we say that $\I\subset \Z[\X]$ is proper if 
$\I_\mathbb Q$ is proper. 
Equivalently, this means that
\[
\I\cap \Z=(0)\,.
\]
For such an ideal $\I$, we let
\[
V(\I):=\{\boldsymbol{\xi}\in \mathbb P^m(\Q)\,:\, P(\boldsymbol{\xi})=0
\text{ for every } P\in \I\}
\]
denote the associated projective variety.

Any proper homogeneous ideal $\I \subset \Z[\X]$ admits a primary decomposition
\[
\I=\I_1\cap \cdots \cap \I_s\cap\I_{s+1}\cap \cdots \cap \I_u\, ,
\]
where $\I_1,\ldots,\I_s$ are proper, whereas $\I_{s+1},\ldots,\I_u$ are not. For $1\le i\le s$, we set $\mathfrak p_i=\sqrt{\I_i}$. The ideals $\mathfrak p_1,\ldots,\mathfrak p_s$ are called the \textit{associated proper prime ideals} of $\I$. 
We say that $\I$ is \emph{unmixed} if
\[
\hgt(\mathfrak p_i)=\hgt(\I)\qquad \text{for every } i\in\{1,\ldots,u\}\,.
\]
In particular, the varieties $V(\mathfrak p_1),\ldots,V(\mathfrak p_s)$ all have the same dimension.

Let $\I \subset \Z[\X]$ be a proper unmixed homogeneous ideal and set $r=m+1-\hgt(\I)$. Consider an $r\times (m+1)$ matrix of indeterminates $U_r=(u_{i,j})_{1\leq i \leq r,\,0\leq j \leq m}$ and define
\[
Y_i = \sum_{j=0}^m u_{i,j}X_j,\qquad 1\leq i \leq r.
\]
Let $\overline{\I}(r)$ denote the set of polynomials $P \in \Z[U_r]$ such that
\[
P X_j^n \in (\I,Y_1,\ldots,Y_r)\Z[U_r,\X],\qquad \forall j \in \{0,\ldots,m\}\, ,
\]
for some integer $n$. With this choice of $r$, the ideal $\overline{\I}(r)$ is principal \cite[Chap.~3, Prop.~4.4]{NP01}.

To each proper unmixed homogeneous ideal $\I$, we associate the following quantities. Let $r=m+1-\hgt(\I)$ and let $F\in \Z[U_r]$ 
be such that $\overline{\I}(r)=(F)$.
We define\footnote{In \cite{Ni_Liv}, the quantity $\deg(\I)$ is denoted by $N(\I)$.}
\[
\deg(\I):=\deg_{U_r} F
\]
and
\[
H(\I):=H(F)\,,
\]
where $H(F)$ denotes the maximum of the absolute values of the coefficients of $F$. 
For a principal homogeneous ideal $\I=(P)$, we have \cite[Lem.~4.1.1]{Ni_Liv}
\begin{equation}\label{eq:deg_H_principal}
\deg(\I) = \deg(P)\qquad \mbox{and} \qquad
\log H(\I) \leq \log H(P)+m^2\deg(P).
\end{equation}

Let $\omeg=(\omega_0,\ldots,\omega_m) \in \C^{m+1}\setminus\{0\}$. The distance\footnote{In \cite{Ni_Liv}, this quantity is denoted by $\Vert \I,\omeg\Vert$.}
 from $\omeg$ to $V(\I)$ is defined by
\[
{\rm dist}(\omeg,\I)
=
\min_{\bbeta=(\beta_0,\ldots,\beta_m) \in V(\I)}
\left(
|\bbeta|^{-1} |\omeg|^{-1}
\max_{i<j} |\omega_i\beta_j - \omega_j\beta_i|
\right)\,,
\]
where $|(x_0,\ldots,x_m)|=\max_i |x_i|$ for $(x_0,\ldots,x_m)\in\mathbb C^m$. 
Closely related to ${\rm dist}(\omeg,\I)$ is the \textit{absolute value of $\I$ at $\omeg$}, denoted by $|\I(\omeg)|$. A precise definition can be found in \cite[p.~119]{Ni_Liv}. We only recall the properties that will be used. 
If $\I=(P)$ with $P$ homogeneous, then \cite[Lem.~4.1.1]{Ni_Liv}
\begin{equation}\label{eq:absolute_value_principale}
|\I(\omeg)|
\leq
|P(\omeg)|\, |\omeg|^{-\deg(P)} (m+1)^{2m\deg(P)}\,.
\end{equation}
If $\I$ is prime and $\omeg \in V(\I)$, then $|\I(\omeg)|=0$ \cite[Chap.~3, Cor.~4.10]{NP01}.

Finally, the quantities $|\I(\omeg)|$ and ${\rm dist}(\omeg,\I)$ are related by the following inequality \cite[Lem.~4.1.4]{Ni_Liv}.

\begin{lem}
\label{lem:Livouille_Elimination}
Let $\I \subset \Z[\X]$ be a proper unmixed homogeneous ideal and set $r=m+1-\hgt(\I)$. For every $\omeg \in \C^{m+1}\setminus\{0\}$, we have
\[
\deg(\I)\,\log {\rm dist}(\omeg,\I)
\leq
\frac{1}{r}\log |\I(\omeg)| + 3m^3\deg(\I)\,.
\]
\end{lem}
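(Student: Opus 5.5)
The plan is to follow Nesterenko's approach as presented in \cite[Chap.~3]{NP01} and \cite[Sect.~4.1]{Ni_Liv}. We work with the Chow form $F$ of $\I$ and with the explicit definition of $|\I(\omeg)|$. Recall that $|\I(\omeg)|$ is obtained by substituting $U_r=S^{(1)}\omeg,\ldots,S^{(r)}\omeg$ into $F$, where the $S^{(k)}$ are skew-symmetric matrices of indeterminates. One then takes a normalized sup-norm (or the height) of the resulting polynomial in the entries of the $S^{(k)}$, and divides by $H(\I)|\omeg|^{r\deg(\I)}$.

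\textbf{Reduction to the prime case.} Write $\I=\I_1\cap\cdots\cap\I_s$ with $\sqrt{\I_i}=\p_i$ and all $\hgt(\p_i)=\hgt(\I)$, which is possible since $\I$ is unmixed. The Chow form is multiplicative: $F=\prod_i F_{\p_i}^{e_i}$, where $e_i$ is the length of $\I_i$. Hence $\deg(\I)=\sum e_i\deg(\p_i)$, and $\log|\I(\omeg)|=\sum e_i\log|\p_i(\omeg)|$ up to an error $O(m^2\deg\I)$, which comes from Gelfond's inequality comparing the norm of a product with the product of the norms. Moreover ${\rm dist}(\omeg,\I)=\min_i{\rm dist}(\omeg,\p_i)$, so $\deg(\I)\log{\rm dist}(\omeg,\I)\le\sum_i e_i\deg(\p_i)\log{\rm dist}(\omeg,\p_i)$. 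It therefore suffices to prove the inequality for a prime $\p$, with a constant of the form $c\,m^3\deg(\p)$, and then add up.

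\textbf{Zero-dimensional case ($r=1$).} Over $\C$, the Chow form factors as $F(u)=c\prod_{k=1}^{D}\langle u,\bbeta_k\rangle$, where $\bbeta_1,\ldots,\bbeta_D$ are the points of $V(\p)$ counted with multiplicity. After the substitution $u=S\omeg$, each factor becomes $\sum_{i<j}s_{ij}(\omega_i\beta_{k,j}-\omega_j\beta_{k,i})$. This is a linear form in the $s_{ij}$ whose norm is $\max_{i<j}|\omega_i\beta_{k,j}-\omega_j\beta_{k,i}|\ge {\rm dist}(\omeg,\p)\,|\bbeta_k|\,|\omeg|$. Gelfond's inequality bounds the norm of the product below by the product of the norms, up to a factor $e^{-O(mD)}$. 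We also need $H(F)\le e^{O(mD)}|c|\prod_k|\bbeta_k|$, which is again Gelfond/Mahler-measure comparison. Combining these gives $|\p(\omeg)|\ge{\rm dist}(\omeg,\p)^D e^{-O(m^2D)}$, as required.

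\textbf{Higher dimension.} We induct on $r$ by cutting with hyperplanes. The Chow form of $\p$ in the variables $U_r$, specialized at $u_r=S^{(r)}\omeg$, is essentially the Chow form of the ideal $\p+(\langle S^{(r)}\omeg,\X\rangle)$. This ideal has dimension one less, and each of its components still has distance to $\omeg$ at least ${\rm dist}(\omeg,\p)$, because $V$ only shrinks. Its degree equals $\deg\p$ by Bézout for a hyperplane section, and the hyperplane is generic in the $S$-variables. Iterating $r$ times reduces to the zero-dimensional case. The absolute value $|\p(\omeg)|$ is then compared with a product of $r$ such evaluations, which is where the exponent $1/r$ appears: each slice contributes a factor ${\rm dist}^{\deg\p}$. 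Each step costs $e^{O(m^2\deg\p)}$, for a total of at most $3m^3\deg\p$.

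\textbf{Main obstacle.} The hardest step is making the hyperplane-section comparison rigorous, namely relating $|\p(\omeg)|$ with the absolute values of the sliced ideals. This requires the precise normalization in the definition of $|\I(\omeg)|$, the behaviour of heights of Chow forms under specialization, and a careful use of Gelfond's inequality so that all the losses stay within $3m^3\deg(\I)$. I would follow \cite[Chap.~3, Prop.~4.4--Cor.~4.10]{NP01} for this part.
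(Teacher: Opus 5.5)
The paper does not prove this lemma; it quotes \cite[Lem.~4.1.4]{Ni_Liv}.

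Your reduction to prime ideals is sound. The inequality it needs, $\sum_i e_i\log|\p_i(\omeg)|\le\log|\I(\omeg)|+m^3\deg(\I)$, is exactly Proposition~\ref{prop:decompo_primaire}. The loss there is $m^3\deg(\I)$, not $m^2\deg(\I)$, so the prime case must be proved with an error of at most $2m^3\deg(\p)$. Your zero-dimensional argument is the standard one.

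The genuine gap is the step you flag yourself, and it is the heart of the lemma: the induction on $r$ by hyperplane sections does not close as formulated. After substituting $S^{(r)}\omeg$ for the last row of $U_r$, the polynomial $F(u_1,\ldots,u_{r-1},S^{(r)}\omeg)$ is the Chow form of a cycle defined over $\C[S^{(r)}]$, cut out by hyperplanes that all pass through $\omeg$. It is not the Chow form of an ideal of $\Z[\X]$, so no inductive hypothesis applies to it. Moreover, the height of a polynomial is not compatible with partial specialization: Gelfond's inequality controls products, not specializations. So nothing in your sketch justifies comparing $|\p(\omeg)|$ with a product of $r$ slice evaluations, each costing $e^{O(m^2\deg\p)}$. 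This comparison is precisely where the factor $1/r$ and the error term have to come from.

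The missing idea is to measure sizes with the Mahler measure $M(\cdot)$, which is multiplicative and compatible with Fubini, and to slice at generic points of the unit torus rather than over $\C(S)$.
\begin{itemize}
\item Let $u_1,\ldots,u_r$ be the rows of $U_r$, let $D=\deg(\I)$, and assume $\omeg\notin V(\I)$, so that $G\neq 0$.
\item Put $G_j:=F(S^{(1)}\omeg,\ldots,S^{(j)}\omeg,u_{j+1},\ldots,u_r)$, so that $G_0=F$ and $G_r=G$.
\item For almost every value on the torus of the variables other than $u_j$, the polynomial $G_{j-1}$ in $u_j$ has the form $c\prod_{k=1}^{D}\langle u_j,\bbeta_k\rangle$. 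Here the $\bbeta_k\in V(\I)$ are the points of $V(\I)$ on the linear space cut out by the other rows.
\item Substituting $u_j=S^{(j)}\omeg$ turns each factor into a linear form in the entries of $S^{(j)}$. Its Mahler measure is at least the largest modulus of its coefficients, hence at least ${\rm dist}(\omeg,\I)\,|\bbeta_k|\,|\omeg|$. On the other hand, $M(\langle u_j,\bbeta_k\rangle)\le\sqrt{m+1}\,|\bbeta_k|$.
\item Integrating over the remaining variables gives $M(G_j)\ge M(G_{j-1})\bigl(|\omeg|\,{\rm dist}(\omeg,\I)/\sqrt{m+1}\bigr)^{D}$. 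Hence $M(G)\ge M(F)\bigl(|\omeg|\,{\rm dist}(\omeg,\I)\bigr)^{rD}(m+1)^{-rD/2}$.
\item Finally, $M(F)\ge 1$ because $F\in\Z[U_r]\setminus\{0\}$. If you normalize by $H(F)$ as you do, use $H(F)\le e^{O(mrD)}M(F)$ instead. Counting monomials gives $H(G)\ge M(G)e^{-O(rm^2D)}$.
\end{itemize}
Together these yield $D\log{\rm dist}(\omeg,\I)\le\tfrac1r\log|\I(\omeg)|+O(m^2D)$. This holds directly for unmixed $\I$, so the reduction to primes is not even needed.
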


The proof of Theorem~\ref{thm:measure_system} relies on an \textit{arithmetic Bézout theorem} \cite[Lem.~4.1.3]{Ni_Liv}. It ensures that the intersection of a projective variety with a hypersurface, both sufficiently close to a point $\omeg$, remains close to $\omeg$. For simplicity, we state it under the assumption $|\omeg|=1$.

\begin{prop}
\label{prop:Bezout}
Let $\omeg \in \C^{m+1}$ with $|\omeg| = 1$. 
Let $\p \subset \Z[\X]$ be a proper prime homogeneous ideal and let $X>0$ satisfy
\[
 \log |\mathfrak p(\omeg)| \leq -X\,.
\]
Let $Q \in \Z[\X]$ be a non-zero homogeneous polynomial. Assume that
\begin{eqnarray}\label{eq:cond_appl_Bezout1}
\nonumber \log H(Q) + (2m+2)\log(\deg(Q)+1) 
\leq 
-\log |Q(\omeg)|\hspace{3cm} \\
\hspace{0.5cm}\leq
\min\Big\{X , -\tfrac{1}{2} \log {\rm dist}(\omeg,\mathfrak p)\Big\}\, .
\end{eqnarray}
Set $r:=m + 1-\hgt(\p)$ and
\[
\sigma
=
\frac{\min \{X , -\tfrac{1}{2} \log {\rm dist}(\omeg,\mathfrak p)\}}{-\log |Q(\omeg)|} \,\cdot
\]
Then the following holds. 
\begin{itemize}
\item[{\rm (i)}] If $r=1$, then
\[
\frac{X}{2\sigma}
\leq
\log H(\mathfrak p)\deg(Q)+ \log H(Q)\deg(\mathfrak p) + 8m^2\deg(\mathfrak p)\deg(Q) \,.
\]

\item[{\rm (ii)}]  If $r\geq 2$, there exists a proper unmixed homogeneous ideal $\I$ such that $\hgt(\I)=\hgt(\mathfrak p)+1$ and
\begin{align*}
\deg(\I)&\leq \deg(\mathfrak p)\deg(Q)\,,\\
\log H(\I)
&\leq
\log H(\mathfrak p)\deg(Q)
+ \log H(Q)\deg(\mathfrak p)
+ (r+1)m\deg(\mathfrak p)\deg(Q)\,,\\
\log |\I(\omeg)|
&\leq 
-\frac{X}{2\sigma}+
\log H(\mathfrak p)\deg(Q)
+ \log H(Q)\deg(\mathfrak p) \\
&\hspace{0.5cm}+
8m^2\deg(\mathfrak p)\deg(Q).
\end{align*}
\end{itemize}
\end{prop}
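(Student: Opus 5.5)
The proposition is a form of \cite[Lem.~4.1.3]{Ni_Liv}, and I would follow the standard Nesterenko--Philippon approach through Chow forms and the Poisson product formula.

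\textbf{Step 1: $Q\notin\p$.} Since $\p$ is prime and $V(\p)$ is nonempty, choose $\bbeta\in V(\p)$ realizing ${\rm dist}(\omeg,\p)$, normalized so that $|\bbeta|=1$. If $Q\in\p$, then $Q(\bbeta)=0$. A Taylor (Lipschitz) estimate for a homogeneous polynomial on the unit polysphere gives
\[
|Q(\omeg)|\le H(Q)(\deg Q+1)^{2m+2}\,{\rm dist}(\omeg,\p).
\]
Feeding this into the two sides of \eqref{eq:cond_appl_Bezout1} yields $-\log|Q(\omeg)|>-\tfrac12\log{\rm dist}(\omeg,\p)$, which contradicts the upper bound in \eqref{eq:cond_appl_Bezout1}. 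Hence $Q\notin\p$. Consequently $(\p,Q)$ has height $\hgt(\p)+1$, and its unmixed part $\I$ (keeping only the primary components of minimal height) satisfies $\hgt(\I)=\hgt(\p)+1$.

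\textbf{Step 2: Chow form of $\I$ and the degree and height bounds.} Let $F$ be the Chow form of $\p$ in the $r$ blocks of variables $U_r$. The Chow form of $\I$ in $r-1$ blocks is, up to a constant, the ``$Q$-norm'' of $V(\p)\cap\{Y_1=\cdots=Y_{r-1}=0\}$. Concretely, by the Poisson formula over a splitting field, writing $\bbeta_1,\dots,\bbeta_{\deg\p}$ for the points of this intersection,
\[
G(U_{r-1})=c\prod_{k}Q(\bbeta_k),
\]
or equivalently a suitable resultant of $F$ and $Q$. This gives $\deg(\I)\le\deg(\p)\deg(Q)$ immediately. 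The height bound follows from Gelfond/Mahler-measure estimates on this product, or from the standard resultant height inequality with the factor $(r+1)m$.

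\textbf{Step 3: the estimate of $|\I(\omeg)|$ (the main obstacle).} Here I would use Nishioka's definition of $|\I(\omeg)|$ via specialization of the $U$-variables to the orthogonal complement of $\omeg$. I split the product over the points $\bbeta_k$ into two groups:
\begin{itemize}
\item points close to $\omeg$, where $|Q(\bbeta_k)|\lesssim |Q(\omeg)|+H(Q)(\deg Q+1)^{2m+2}{\rm dist}$;
\item the remaining points, which are bounded trivially by $H(Q)\,e^{O(m\deg Q)}$.
\end{itemize}
The smallness hypothesis $\log|\p(\omeg)|\le -X$ says that the Chow form of $\p$ is small at $\omeg$. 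This guarantees that at least one point, with the correct multiplicity weighting, lies within about $e^{-X/\deg\p}$ of $\omeg$. The factor $\sigma$ is the quantity that balances which of $X$ and $-\tfrac12\log{\rm dist}$ controls the small factor. The delicate part is the bookkeeping that produces $-X/(2\sigma)$ from this small factor while the other factors contribute the terms $\log H(\p)\deg Q+\log H(Q)\deg\p+8m^2\deg\p\deg Q$. I expect this to require Nishioka's interpolation between $|\p(\omeg)|$ and ${\rm dist}(\omeg,\p)$, together with Lemma~\ref{lem:Livouille_Elimination}.

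\textbf{Step 4: the case $r=1$.} When $r=1$, $V(\p)$ is a finite set of Galois-conjugate points, and the same product $\prod_k Q(\bbeta_k)$, suitably normalized by the leading coefficient of $F$, is a nonzero rational integer. It is nonzero by Step~1. Its absolute value is therefore at least $1$. Combining this lower bound with the upper bound from Step~3, in which the small factor contributes $-X/(2\sigma)$, gives exactly inequality (i).
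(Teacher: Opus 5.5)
The paper does not prove this proposition: it imports it from Nishioka's book \cite[Lem.~4.1.3]{Ni_Liv}, normalized to $|\omeg|=1$. Your opening sentence is therefore already the paper's whole argument. The self-contained sketch you add, however, is not a proof.

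Steps~1, 2 and~4 are the standard Chow-form framework and are essentially fine, with two small points. First, with your Lipschitz constant $(\deg Q+1)^{2m+2}$, Step~1 only yields $-\log|Q(\omeg)|\ge-\tfrac12\log{\rm dist}(\omeg,\p)$, which does not contradict \eqref{eq:cond_appl_Bezout1}. The sharper bound $|Q(\omeg)|\le H(Q)(\deg Q+1)^{m+1}{\rm dist}(\omeg,\p)$ gives the strict inequality you need. Second, the constant $(r+1)m$ in the height bound is asserted, not derived. The real gap is Step~3, which carries the entire content of the proposition and on which Step~4 also rests. You only announce it, and the mechanism you describe cannot produce the term $-X/(2\sigma)$.

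Your plan locates one point within about $e^{-X/\deg(\p)}$ of $\omeg$ (roughly what Lemma~\ref{lem:Livouille_Elimination} gives) and bounds the other factors trivially. Up to error terms, this yields at best $-\log|\I(\omeg)|\gtrsim\min\{-\log|Q(\omeg)|,\,X/\deg(\p)\}$, which is too weak for two reasons.
\begin{itemize}
\item One always has $X/(2\sigma)\ge-\tfrac12\log|Q(\omeg)|$, while only $X\ge-\log|Q(\omeg)|$ is assumed. So $X/\deg(\p)$ may be much smaller than what is needed.
\item When $-\tfrac12\log{\rm dist}(\omeg,\p)<X$, one has $X/(2\sigma)=X\log|Q(\omeg)|/\log{\rm dist}(\omeg,\p)$. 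This in effect counts how many of the $\deg(\p)$ intersection points lie near $\omeg$, each contributing a factor of size about $|Q(\omeg)|$. A single point never contributes more than $-\log|Q(\omeg)|$.
\end{itemize}

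The far points must not be discarded. Every factor should be bounded by $|Q(\bbeta_k)|\ll\max\{|Q(\omeg)|,{\rm dist}(\omeg,\bbeta_k)\}$, using ${\rm dist}(\omeg,\bbeta_k)\ge{\rm dist}(\omeg,\p)$ for all $k$. Set $a=-\log|Q(\omeg)|$, $b_k=-\log{\rm dist}(\omeg,\bbeta_k)$ and $B=-\log{\rm dist}(\omeg,\p)\ge 2a$. Then
\[
\sum_k\min\{a,b_k\}\ \ge\ \max\Bigl\{\min\bigl\{a,\textstyle\sum_k b_k\bigr\},\ aB^{-1}\textstyle\sum_k b_k\Bigr\}.
\]
Together with $\sum_k b_k\gtrsim X$, this gives the term $X/(2\sigma)$ in both regimes.

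For $r=1$ this is elementary. The $\bbeta_k$ are the fixed points of $V(\p)$, and $\sum_k b_k\gtrsim X$ follows from factoring the Chow form of $\p$ into linear forms. For $r\ge2$, the $\bbeta_k$ depend on the specialization of $U_1,\ldots,U_{r-1}$ to linear forms vanishing at $\omeg$, which is what defines $|\I(\omeg)|$. Making the estimate uniform in that specialization, including the norm comparisons responsible for $8m^2\deg(\p)\deg(Q)$, is the substance of the cited lemma, and your sketch does not supply it.
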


The final tool we need is the following proposition, which controls the degrees, heights, and absolute values of the associated proper prime ideals of a proper unmixed homogeneous ideal. It is a slightly weaker version of \cite[Lem.~4.1.2]{Ni_Liv}, obtained by omitting the additional term $\log\vert b\vert$, which plays no role in our argument.

\begin{prop}\label{prop:decompo_primaire}
Let $\I\subset \Z[\X]$ be a proper unmixed homogeneous ideal, and let
$\p_1,\ldots,\p_s$ be its associated proper prime ideals. For
$1\le i\le s$, let $k_i$ denote the exponent of $\p_i$ in the primary
decomposition of $\I$. Then
\begin{align*}
\sum_{i=1}^s k_i\deg(\p_i)
&=
\deg(\I)\,,
\\
\sum_{i=1}^s k_i\log H(\p_i)
&\le
\log H(\I)+m^2\deg(\I)\,.
\end{align*}
Moreover, for any $\omeg\in\C^{m+1}\setminus\{0\}$, we have
\[
\sum_{i=1}^s k_i\log |\p_i(\omeg)|
\le
\log |\I(\omeg)|+m^3\deg(\I)\,.
\]
\end{prop}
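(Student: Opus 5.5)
The plan is to deduce all three assertions from a factorization of the elimination form $F$ of $\I$ into the elimination forms $F_i$ of the associated proper primes $\p_i$. Then degree, height and absolute value are each handled by a standard multiplicativity property.

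\textbf{Factorization.} Set $r=m+1-\hgt(\I)$ and $\overline{\I}(r)=(F)$. Since $\I$ is unmixed, every $\p_i$ has height $\hgt(\I)$, so the same $r$ is used to define $\overline{\p_i}(r)=(F_i)$. I would show, as in \cite[Chap.~3]{NP01}, that
\[
F=b\prod_{i=1}^s F_i^{k_i},\qquad b\in\Z\setminus\{0\}.
\]
The non-proper primary components $\I_{s+1},\ldots,\I_u$ contain a nonzero integer, so they contribute only the integer factor $b$. Each proper component $\I_i$ contributes $F_i^{k_i}$, by additivity of the multiplicity of the generic linear section over $\mathbb Q$. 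Concretely, after extending to $\mathbb Q[\X]$, the form $F$ vanishes on the specialisations $U_r$ for which $V(\I)\cap\{Y_1=\cdots=Y_r=0\}\neq\emptyset$, with the correct multiplicities. This identity is the main step. I would follow Nesterenko's argument: localize at each $\p_i$, use that the length of $\Z[\X]_{\p_i}/\I_{\p_i}$ is $k_i$, and compare with the multiplicity of $F_i$ in $F$.

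\textbf{Degrees and heights.} Taking $\deg_{U_r}$ of the factorization gives $\deg(\I)=\sum_i k_i\deg(\p_i)$ exactly. For heights, I would apply Gelfond's inequality to the product of polynomials in the $r(m+1)$ variables $U_r$: for $G=G_1\cdots G_N$,
\[
\prod_j H(G_j)\le e^{\,n\deg G}H(G),
\]
where $n$ is the number of variables. Using $|b|\ge1$ and $\deg F=r\deg(\I)$, this yields
\[
\sum_i k_i\log H(\p_i)\le \log H(\I)+c(m)\deg(\I).
\]
The remaining work is to check that the constant obtained, roughly $r(m+1)\cdot r$ up to bookkeeping with $\deg_{U_r}$ versus total degree, fits under $m^2$. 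This is the one place where the exact constant needs care. If it does not fit, I would instead use the Mahler measure version, $M(\prod G_j)=\prod M(G_j)$ together with $H\le 2^{\deg}M$ and $M\le (\text{number of monomials})^{1/2}H$, which is sharper.

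\textbf{Absolute values.} $|\I(\omeg)|$ is defined in \cite[p.~119]{Ni_Liv} as a normalized sup of $|F|$ over $U_r$ whose rows are specialized so that the $Y_i$ vanish on $\omeg$-directions. The precise construction uses skew-symmetric matrices $S^{(i)}\omeg$, divided by $H(F)$ and $|\omeg|^{\deg}$. The factorization then gives $|F(\cdot)|=|b|\prod_i|F_i(\cdot)|^{k_i}$ pointwise. The normalizing heights are related by the same Gelfond bound, and $|b|\ge1$ is dropped. The loss when passing from the sup of a product to the product of sups is controlled again by Gelfond-type inequalities. This gives the extra $m^3\deg(\I)$.

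I expect the main obstacle to be the factorization identity with the correct multiplicities $k_i$. After that, the three inequalities are bookkeeping of constants.
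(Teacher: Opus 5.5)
Your route is the standard one and is what lies behind the paper's treatment. The paper gives no argument of its own: it quotes \cite[Lem.~4.1.2]{Ni_Liv}, which rests on Nesterenko's factorization $F=b\prod_i F_i^{k_i}$ and on Gelfond's inequality and carries an extra nonnegative term $\log|b|$. It then discards that term, exactly as you do using $|b|\ge 1$.

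The step you leave open, the constant $m^2$, is a genuine loose end as written. With Gelfond in the form you state ($n=r(m+1)$ variables, total degree $r\deg(\I)$), the loss is $r^2(m+1)\deg(\I)$. Even the sharper version, with the sum of partial degrees, gives $r(m+1)\deg(\I)$, which exceeds $m^2\deg(\I)$ when $r=m$. Passing to Mahler measure does not by itself remove the surplus variables.

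What closes the gap is multihomogeneity. $F$ has degree $\deg(\I)$ in each row $u^{(i)}=(u_{i,0},\ldots,u_{i,m})$ of $U_r$, and each $F_i$ is multihomogeneous as well. Setting $u_{i,0}=1$ for $1\le i\le r$ is injective on monomials, so it preserves heights and the factorization. It leaves $rm$ variables, each of partial degree at most $\deg(\I)$. Gelfond's inequality $\prod_j H(G_j)\le e^{\sum_v \deg_v G}H(G)$ then yields
\[
\log|b|+\sum_{i=1}^s k_i\log H(\p_i)\le \log H(\I)+rm\deg(\I)\,.
\]
Moreover $r\le m$ as soon as $\hgt(\I)\ge 1$; if $\hgt(\I)=0$, then $\I=(0)$ and the statement is trivial.

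For the last inequality no such care is needed. $F(S^{(1)}\omeg,\ldots,S^{(r)}\omeg)$ involves $r\binom{m+1}{2}$ variables $s^{(i)}_{jk}$, each of partial degree at most $\deg(\I)$. So the Gelfond loss is at most $r\binom{m+1}{2}\deg(\I)\le m^3\deg(\I)$, and the factor $|\omeg|^{-r\deg(\I)}$ splits exactly by additivity of degrees.

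A smaller point concerns normalization. If $|\I(\omeg)|$ were divided by $H(F)$, as you write, the comparison of normalizing heights you would need is the easy inequality
\[
\log H(F)\le \log|b|+\sum_i k_i\log H(F_i)+c\deg(\I)\,.
\]
That is not Gelfond's inequality, which goes the other way. In Nishioka's definition there is no such division: $|\I(\omeg)|=|F(S^{(1)}\omeg,\ldots,S^{(r)}\omeg)|\,|\omeg|^{-r\deg(\I)}$. This is consistent with \eqref{eq:absolute_value_principale}, which carries no factor $H(P)^{-1}$. So that step is simply unnecessary.
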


\subsection{A weaker version of Theorem~\ref{thm:measure_system}}\label{sec:first_regime}

To clarify the exposition, we first state and prove a weaker version of Theorem~\ref{thm:measure_system}. 
We explain in Section~\ref{sec:2nd_regime} how to recover the sharper bound stated there.

\begin{thm}
\label{thm:measure_1st_regime}
Theorem~\ref{thm:measure_sans_system} holds with
\[
C_2(d):=e^{-C_1 d^{3t+3}}\,.
\]
\end{thm}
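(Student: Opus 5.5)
The statement asks for Theorem~\ref{thm:measure_sans_system} with the weaker constant $C_2(d)=e^{-C_1d^{3t+3}}$; I read $t$ as the $\tau$ of that theorem. I plan to prove it in two stages. First, reduce to a regular Mahler system whose transcendence degree is exactly $\tau$. Second, run the Nesterenko--Becker--Nishioka elimination argument for that system, using Propositions~\ref{prop:Bezout} and \ref{prop:decompo_primaire} and Lemma~\ref{lem:Livouille_Elimination}, with crude parameter choices.

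\emph{Reduction.} By the existence of good Mahler operators \cite{AF24_EM}, each $f_i$ satisfies a linear Mahler equation of some order $n_i$ for which $\alpha$ is regular. Its companion system has $\alpha$ as a regular point. Let $\boldsymbol f$ be the column vector of all $f_i(z^{q^\ell})$, $1\le i\le r$, $0\le \ell<n_i$. It satisfies a block-diagonal system $Y(z^q)=A(z)Y(z)$ with $A\in{\rm GL}(\Q(z))$ regular at $\alpha$. Every $f_i(z^{q^\ell})$ is a $\Q(z)$-linear combination of the coordinates of $\boldsymbol f$. Hence ${\rm tr.deg}_{\Q(z)}\boldsymbol f=\tau$. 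Since $P(f_1(\alpha),\dots,f_r(\alpha))$ is a polynomial in coordinates of $\boldsymbol f(\alpha)$ with the same degree and height, it suffices to prove the weak form of Theorem~\ref{thm:measure_system} for $\boldsymbol f$ with $t=\tau$.

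\emph{Auxiliary construction.} Homogenize with $X_0$ and set $\omeg=(1,\boldsymbol f(\alpha))/|(1,\boldsymbol f(\alpha))|$. Let $\J\subset\Q(z)[\X]$ be the ideal of homogeneous relations among $1,f_1,\dots,f_m$. Its degree-$N$ part has codimension $\asymp N^t$. By Siegel's lemma I build $R\in\Q[z][\X]$, homogeneous of degree $N$ in $\X$, with $\deg_z R\le N$ and $R\notin\J$, such that $R(z,1,\boldsymbol f(z))$ vanishes at $0$ to order $\gg N^{t+1}$. Iterating the system gives polynomials $Q_k(\X)=R_k(\alpha,\X)$ of degree $N$ and logarithmic height $\ll q^kN$. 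Regularity of $\alpha$ gives
\[
-\log|Q_k(\omeg)|\asymp q^kN^{t+1}
\]
for all large $k$ outside a sparse set; the lower bound comes from the nonvanishing of $R(z,\boldsymbol f)$ and its vanishing order at $0$.

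\emph{Descent by elimination.} Suppose $0<|P(\omeg)|$ is tiny compared with $H(P)^{-C_1d^t}e^{-C_1d^{3t+3}}$. By \eqref{eq:absolute_value_principale} and Proposition~\ref{prop:decompo_primaire}, some associated prime $\p$ of $(P)$ has $|\p(\omeg)|$ small, with degree $\le d$ and height controlled. Also $\omeg\notin V(\p)$ is allowed only in the controlled form given by Lemma~\ref{lem:Livouille_Elimination}. I then repeat the following step. Given a prime $\p$ with $\log|\p(\omeg)|\le -X$, choose $k$ so that $-\log|Q_k(\omeg)|$ lies in the window \eqref{eq:cond_appl_Bezout1}. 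Apply Proposition~\ref{prop:Bezout}(ii) to get an unmixed ideal of height one more, and extract a good prime from it by Proposition~\ref{prop:decompo_primaire}.

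After at most $t$ steps, which is the dimension of the Zariski closure of $\omeg$ over $\Q$ that is relevant here, case~(i) applies. It gives an upper bound on $X/\sigma$ that contradicts the accumulated smallness once $N\asymp d$ and $C_1$ is large. Bookkeeping the degrees ($\le dN^{j}$) and the heights through the $t$ steps yields exponents $\deg(P)^t$ on $H(P)$ and $d^{3t+3}$ in $C_2$. These are coarser than optimal precisely because $k$ is chosen crudely, without the refined two-regime optimisation.

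\emph{Main obstacle.} Each Bézout step needs $Q_k\notin\p$, i.e.\ that $Q_k$ does not vanish identically on the current variety. This is where the hypothesis $P(\omeg)\ne0$ and the ideal $\J$ must enter: relations holding at $\alpha$ but not coming from $\J$ must be excluded. I would handle it by a zero estimate for the iterates $R_k$ modulo $\J$. If $Q_k\in\p$ for many consecutive $k$, then pulling back along $A$ would put $R$ in $\J$ modulo the specialisation. I would try to use the Nishioka-type specialisation lemma at regular points: the homogeneous relations over $\Q$ among the coordinates of $\boldsymbol f(\alpha)$ are specialisations of those in $\J$. This reduces the issue to varieties of dimension $\le t$.
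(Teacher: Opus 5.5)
Your proposal has a genuine gap: the descent starts at the wrong level, and the obstacle you single out is not the real one.

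\textbf{The descent cannot close when $t<m$.} You start from an associated prime of $(\prescript{\rm h}{}{P})$, which has height $1$, i.e.\ $r=m$. From there, reaching case~(i) of Proposition~\ref{prop:Bezout} takes $m$ intersections, not $t$. Your count ``at most $t$ steps'' is only right for subvarieties of the Zariski closure $V(\p_\omeg)$ of $\omeg$, and nothing in your construction puts you there. The auxiliary polynomials cannot pay for more than $t$ steps:
\begin{itemize}
\item each step multiplies the degree by $\asymp N$;
\item the window you can guarantee via the ${\rm dist}$ term (Lemma~\ref{lem:Livouille_Elimination}) is only $\asymp X/\deg\p$;
\item so the error terms $N\log H(\p)+q^kN\deg\p$ overtake the gain $q^kN^{t+1}$ after $t$ steps.
\end{itemize}
This is intrinsic, not a bookkeeping issue. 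A descent that worked from an arbitrary small prime at level $r=t+1$ would apply to the prime ideal $\p_\omeg$ of homogeneous relations satisfied by $\omeg$. That ideal has $r=t+1$ by Nishioka's theorem, fixed degree and height, and $|\p_\omeg(\omeg)|=0$, which gives an absurdity (cf.\ the remark after Lemma~\ref{lem:Ideal_Mahler}). What you describe is Nishioka's argument, which is valid as stated only when $t=m$, where $\p_\omeg=(0)$.

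\textbf{The missing first step.} The paper first applies Proposition~\ref{prop:Bezout} to $\p_\omeg$ and $Q=\prescript{\rm h}{}{P}$, with $X=-\log|P(\omeg)|$ and $\sigma=1$. This is possible because $\hgt(\p_\omeg)=m-t$ and ${\rm dist}(\omeg,\p_\omeg)=0$. The result is an unmixed ideal $\I$ with $m+1-\hgt(\I)=t$, $\deg\I\ll d$, $\log H(\I)\ll d+\log H(P)$, and $\log|\I(\omeg)|\le\frac12\log|P(\omeg)|+O(d+\log H(P))$. Only then does one descend, using exactly $t$ auxiliary steps; the paper organises this as the minimal-counterexample induction of Lemma~\ref{lem:Ideal_Mahler2} over $r\le t$. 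This is precisely where $P(\omeg)\neq 0$ enters, since it gives $\prescript{\rm h}{}{P}\notin\p_\omeg$. It is also where the lifting theorem enters, through $\hgt(\p_\omeg)=m-t$. You name the right ingredient (Nishioka's specialisation) but deploy it in the wrong place.

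\textbf{The obstacle $Q_k\notin\p$ is automatic.} It is forced by \eqref{eq:cond_appl_Bezout1}, which gives ${\rm dist}(\omeg,\p)\le|Q(\omeg)|^2$. If $Q\in\p$, comparing with a nearby zero of $Q$ on $V(\p)$ would yield $|Q(\omeg)|\ll H(Q)(\deg Q+1)^{m+1}|Q(\omeg)|^2$, contradicting the left-hand inequality. This is exactly why the window involves ${\rm dist}(\omeg,\p)$. So no zero estimate for the $R_k$ modulo $\J$ is needed beyond Lemma~\ref{lem:Multiplicity}. That lemma bounds $\val(R)$ from above and gives \eqref{eq:encadr_Qk} for every $k$ with $q^k\gg N^{t+1}$, with no exceptional set.

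\textbf{A smaller point on the reduction.} Proposition~\ref{prop:good_eq} provides $q^\ell$-Mahler equations, not $q$-Mahler ones. For example, for $f=1/(z-\alpha^q)$ no $q$-system with $\alpha$ regular exists. Consequently your vector has transcendence degree $\le\tau$ rather than $=\tau$. This is harmless, since a smaller $t$ only strengthens the bound.
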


Recall that $t$ denotes the transcendence degree of the field extension of $\Q(z)$ generated by $f_1(z),\ldots,f_m(z)$. We also recall that the coefficients of an $M$-function, say
\[
f(z)=\sum_{n=0}^\infty u(n)z^n\,,
\]
generate a number field $\K$, that is, a finite extension of $\Q$. Moreover, there exist real numbers $\kappa,\rho>0$ and an integer $D>0$ such that
\begin{equation}\label{eq:estimation_coeff}
|\sigma(u_n)| \leq \kappa \rho^{n}\,,
\qquad
D^n u_n \in \Z_{\K}\,,
\qquad
\forall n\geq 1,\ \forall \sigma \in {\rm Gal}(\K/\mathbb Q)\, ,
\end{equation}
where $\Z_\K$ denotes the ring of integers of $\K$. This follows, for instance, from \cite[Thm.~1.1]{ABS} and \cite[Chap.~3, Cor.~8]{Du93}; see also \cite[p.~145]{Ni_Liv}. 

The proof relies on the following lemma, which will be established in the next section.

\begin{lem}\label{lem:Ideal_Mahler} 
We keep the notation and assumptions of Theorem~\ref{thm:measure_system}, and set
\[
\omeg_{0}:=(1,f_1(\alpha),\ldots,f_m(\alpha)) \in \C^{m+1}.
\]
There exist positive constants $\lambda$ and $D_0$ such that for every $D>D_0$, every $H$ satisfying $\log H \geq D^{2t+3}$, and every proper unmixed homogeneous ideal $\I \subset \Z[\X]$ with
\[
 \hgt(\I)\geq m+1-t\,,\qquad
\deg(\I) \leq D\,,\qquad \mbox{and} \qquad
\log H(\I) \leq \log H\,,
\]
we have
\[
\log |\I(\omeg_0)| \geq -\lambda D^{t}\log H\,.
\]
\end{lem}

\begin{rem}
Let $\p$ be the ideal generated by the homogeneous polynomials vanishing at $\omeg_0$.  Then $\p$ is proper, and it is also unmixed since it is prime. 
Moreover, 
$\log |\p(\omeg_0)|=-\infty$.
Consequently, no lower bound of the form given in Lemma~\ref{lem:Ideal_Mahler} can hold for $\p$. It follows that
$
\hgt(\p)<m+1-t$, 
or equivalently
$
\hgt(\p)\le m-t$. 
Since, on the other hand, one always has $\hgt(\p)\ge m-t$, we obtain
\[
\hgt(\p)=m-t\,.
\]
This is equivalent to Nishioka's theorem~\cite[Thm.~4.2.1]{Ni_Liv}: 
\[
{\rm tr.deg}_{\Q}(f_1(\alpha),\ldots,f_m(\alpha))=t=:{\rm tr.deg}_{\Q(z)}(f_1(z),\ldots,f_m(z))\,.
\]
 Thus, Lemma~\ref{lem:Ideal_Mahler} contains Nishioka's theorem as a special case. It also explains why the assumption
\[
\hgt(\I)\ge m+1-t
\]
is necessary.
\end{rem}

\begin{proof}[Proof of Theorem~\ref{thm:measure_1st_regime}]
Let $\omeg_0=(1,f_1(\alpha),\ldots,f_m(\alpha))$ as in Lemma~\ref{lem:Ideal_Mahler}. 
There is no restriction in assuming that $|\omeg_0| = 1$. 
We can also assume that $t\geq 1$, since otherwise the result follows from the classical Liouville inequality. 
Throughout the proof, $\lambda_1,\lambda_2,\ldots$ denote positive constants independent of $P$. 

Let $\prescript{\rm h}{}{P}\in \Z[\X]$ be the homogenization of $P$, so that
\[
\prescript{\rm h}{}{P}(\omeg_0)=P(f_1(\alpha),\ldots,f_m(\alpha))\,,
\]
and 
\[
\deg(\prescript{\rm h}{}{P})=\deg(P)=d\,,\quad
H(\prescript{\rm h}{}{P})=H(P)\,.
\]
Assume that $|\prescript{\rm h}{}{P}(\omeg_0)|\neq 0$. We aim to bound this quantity from below. 
We also assume that
\[
-\log |\prescript{\rm h}{}{P}(\omeg_0)| \geq \log H(P) + (2m+2)\log(d+1)\,,
\]
since otherwise there is nothing to prove. 

As we will see, if $t=m$, then the result follows from Lemma~\ref{lem:Ideal_Mahler} applied with $\I=(\prescript{\rm h}{}{P})$. But when $t<m$, the height of 
$(\prescript{\rm h}{}{P})$ is not large enough to apply Lemma~\ref{lem:Ideal_Mahler} and then we need to use the arithmetic Bezout theorem to 
provide an ideal with a larger height and apply the lemma.

Let $\p$ denote the homogeneous ideal associated with $\omeg_0$, that is,
\[
\p=\{R\in \Z[\X]\ \text{homogeneous} : R(\omeg_0)=0\}\,.
\]
Then $\p$ is proper and prime. 
By Nishioka's theorem \cite[Thm.~4.2.1]{Ni_Liv}, we have  
$
\hgt(\p)= m-t$. 
In particular, \[m+1-\hgt(\p)=1+t\ge 2\,.\]
Moreover, $\deg(\p)$ and $H(\p)$ depend only on $\omeg_0$, and since $\omeg_0\in V(\p)$ we have
\[
{\rm dist}(\omeg_0,\p)=0 \qquad \mbox{and}\qquad|\p(\omeg_0)|=0
\]
(see \cite[Cor.~4.10]{NP01}).

Assume first that $\p \neq (0)$. We apply Proposition~\ref{prop:Bezout} with $\p$ and $Q=\prescript{\rm h}{}{P}$.
We set
\[
X = -\log |\prescript{\rm h}{}{P}(\omeg_0)| < +\infty\,.
\]
Then $\sigma=1$, and the assumptions of Proposition~\ref{prop:Bezout} are satisfied.
Since $m+1-\hgt(\p)\ge 2$, we obtain a proper unmixed homogeneous ideal $\I$ such that $\hgt(\I)=\hgt(\p)+1$ and
\begin{align}
\deg(\I)&\leq \lambda_1 \deg(P), \nonumber\\
\log H(\I)&\leq \lambda_1 \deg(P) + \lambda_2 \log H(P), \label{eq:mino_intersec_ideals}\\
\log |\I(\omeg_0)| &\leq \tfrac12 \log |\prescript{\rm h}{}{P}(\omeg_0)| + \lambda_1 \deg(P) + \lambda_2 \log H(P). \nonumber
\end{align}

If $\p=(0)$, we set $\I=(\prescript{\rm h}{}{P})$, which satisfies the same bounds by
\eqref{eq:deg_H_principal} and \eqref{eq:absolute_value_principale}.

Let
\[
D=\lambda_1 \deg(P) \qquad \mbox{and} \qquad
\log H=\lambda_1 \deg(P) + \lambda_2 \log H(P)\,.
\]
Up to enlarging $\lambda_1$, we assume that $\lambda_1\ge D_0$, so that $D\ge D_0$, where $D_0$ is given by Lemma~\ref{lem:Ideal_Mahler}. Note that, in both cases, we have $m+1-\hgt(\I)\le t$.

We first consider the special case where 
\[
\lambda_1 \deg(P) + \lambda_2 \log H(P) \ge (\lambda_1 \deg(P))^{2t+3}\,.
\]
Then $\log H \ge \lambda_3 D^{2t+3}$ for some $\lambda_3>0$ which does not depend on $D$. 
Multiplying $P$ by a $\lceil \lambda_3^{-1} \rceil$, which does not affect the conclusion of the theorem, we can assume that $\log H \geq D^{2t+3}$. 
Applying Lemma~\ref{lem:Ideal_Mahler} gives
\[
\log \vert\I(\omeg_0) \vert \geq  -\lambda D^{t}\log H \geq -\lambda_4 \left( \deg(P)^t\log H(P) +  \deg(P)^{t+1}\right)\,,
\]
Using \eqref{eq:mino_intersec_ideals}, we obtain
\[
\log \vert P(f_1(\alpha),\ldots,f_m(\alpha))\vert \geq   -\lambda_5 \left( \deg(P)^t\log H(P) +  \deg(P)^{t+1}\right)\,.
\]
In that case, the conclusion of the theorem holds with $C_1=\lambda_5$ and the sharper bound $C_2(d)=e^{-C_1d^{t+1}}$. 

To prove the general case, we proceed as follows. 
Let $\lambda_6>0$ be large enough and set
\[
P_0 = \left\lceil e^{\lambda_6 \deg(P)^{2t+3}} \right\rceil P\,.
\]
Then $\log H(P_0)\le \log H(P)+\lambda_6 \deg(P)^{2t+3}$, and applying the previous bound to $P_0$ yields
	\begin{align*}
	\log |P(f_1(\alpha),\ldots,f_m(\alpha))|
&\ge \log |P_0(f_1(\alpha),\ldots,f_m(\alpha))| - \lambda_6 \deg(P)^{2t+3}\\
&\ge -\lambda_5\big(\deg(P)^t \log H(P_0) + \deg(P)^{t+1}\big) \\
&\hspace{5cm}- \lambda_6 \deg(P)^{2t+3}\\
&\ge -\lambda_7\big(\deg(P)^t \log H(P) + \deg(P)^{3t+3}\big)\,.
	\end{align*}
	
\noindent This completes the proof.
\end{proof}

\subsection{Proof of Lemma~\ref{lem:Ideal_Mahler}}
 
We continue with the assumptions of Theorem~\ref{thm:measure_system}. Recall that we set
\[
\omeg_0=(1,f_1(\alpha),\ldots,f_m(\alpha))
\quad\text{and}\quad
t={\rm tr.deg}_{\Q(z)}(f_1(z),\ldots,f_m(z))\,,
\]
while, according to Nishioka's theorem, $t= {\rm tr.deg}_{\Q}(\omeg_0)$.
For $r \in \{1,\ldots,t\}$, we introduce the quantities
\[
\nu(r)=\frac{r}{t-r+1}\,,\qquad
\mu(r)=\frac{3t-r+3}{t-r+1}\,,\qquad
\tau(r)=\frac{2t^2+2t-2rt-2}{r(t-r+1)}\,\cdot
\]
When $r=t$, we have $\nu(t)=t$, $\mu(t)=2t+3$, and $\tau(t)=0$. In particular, Lemma~\ref{lem:Ideal_Mahler} follows from the more precise statement below.

\begin{lem}\label{lem:Ideal_Mahler2}
Let $r \in \{1,\ldots,t\}$. There exist positive constants $\lambda$ and $D_0$ such that for every $D\geq D_0$ and every $H$ satisfying
\[
\log H \geq \lambda^{\tau(r)} D^{\mu(r)},
\]
the following holds. For every proper unmixed homogeneous ideal $\I \subset \Z[\X]$ with
\[
\hgt(\I)=m+1-r\,,\qquad
\deg(\I) \leq D\,,\qquad
H(\I) \leq H\,,
\]
we have
\begin{equation}\label{eq:mino_Iomega}
\log |\I(\omeg_0)| \geq -\lambda \Bigl(D^{\nu(r)}\log H(\I)
+ D^{\nu(r)-1}\log H \cdot \deg(\I)\Bigr)\,.
\end{equation}
\end{lem}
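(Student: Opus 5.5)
The plan is to follow Nishioka's scheme \cite[Chap.~4]{Ni_Liv} and argue by induction on $r\in\{1,\ldots,t\}$, by contradiction at each step. Suppose \eqref{eq:mino_Iomega} fails for some ideal $\I$ with $\hgt(\I)=m+1-r$, $\deg(\I)\le D$, $H(\I)\le H$. By Proposition~\ref{prop:decompo_primaire}, some associated proper prime $\p$ of $\I$ inherits the failure. Precisely, $\p$ satisfies $\deg(\p)\le D$ and $\log H(\p)\le \log H(\I)+m^2D$, and $\log|\p(\omeg_0)|$ is at most $\deg(\p)/\deg(\I)$ times the right-hand side of the negation of \eqref{eq:mino_Iomega}, up to $O(D)$ terms. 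So it suffices to treat prime ideals, and we fix such a $\p$ with $|\p(\omeg_0)|$ very small.

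\textbf{Transport along the Mahler system.} Set $\omeg_k:=(1,f_1(\alpha^{q^k}),\ldots,f_m(\alpha^{q^k}))$. Since $\alpha$ is regular, iterating $Y(z^q)=A(z)Y(z)$ gives $\omeg_k=\mathcal A_k\,\omeg_0$, where $\mathcal A_k$ is an invertible $(m+1)\times(m+1)$ matrix with algebraic entries built from $A(\alpha),\ldots,A(\alpha^{q^{k-1}})$. Its entries have height and denominators bounded by $e^{O(q^k)}$. Pull $\p$ back by $\mathcal A_k^{-1}$ and clear denominators and conjugates by taking a norm over the number field. This gives a proper unmixed ideal $\p_k$ of the same height with
\[
\deg(\p_k)\ll \deg(\p),\qquad \log H(\p_k)\ll \log H(\p)+q^k\deg(\p),\qquad \log|\p_k(\omeg_k)|\lesssim \log|\p(\omeg_0)|+O(q^k\deg(\p)).
\]
We pass again to a prime component.

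\textbf{Auxiliary polynomial and the role of the relation ideal.} Let $\mathfrak P\subset\Q(z)[\X]$ be the homogeneous ideal of relations of $(1,f_1,\ldots,f_m)$ over $\Q(z)$; its height is $m-t$. By Siegel's lemma, we build a homogeneous $R(z,\X)\in\Z_\K[z,\X]$ with $\deg_{\X}R\le N$, $\deg_z R\le N^{t+1}$ (roughly, up to a parameter), and $R\notin\mathfrak P$. We also require that $E(z):=R(z,1,f_1(z),\ldots,f_m(z))$ vanishes at $0$ to order $M\asymp N^{t+1}\cdot(\text{parameter})$. The count works because the space of polynomials of degree $\le N$ modulo $\mathfrak P$ has dimension $\asymp N^t$. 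The coefficient bounds \eqref{eq:estimation_coeff} control $H(R)$. A multiplicity estimate for Mahler functions, in the form valid modulo $\mathfrak P$ (Nishioka's zero lemma), gives $\mathrm{ord}_0 E\le cN^{t+1}$, so $E\neq 0$. Define $Q_k(\X):=$ the homogeneous polynomial obtained from $R(\alpha^{q^k},\X)$, after taking a norm to land in $\Z[\X]$. Then
\[
\log|Q_k(\omeg_k)|\approx -q^kM,\qquad \deg Q_k\ll N,\qquad \log H(Q_k)\ll q^kN^{t+1}.
\]
The upper bound on $|Q_k(\omeg_k)|$ comes from $E(\alpha^{q^k})$. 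The lower bound $|Q_k(\omeg_k)|\ge e^{-c q^kM}$ for $k$ large comes from the exact order of $E$. We then choose $k$ so that $q^kM$ lies in the window required by \eqref{eq:cond_appl_Bezout1}. The upper edge of that window is $\min\{X,-\tfrac12\log{\rm dist}(\omeg_k,\p_k)\}$, controlled via Lemma~\ref{lem:Livouille_Elimination} by the assumed smallness of $|\p(\omeg_0)|$. The lower edge is $\log H(Q_k)+(2m+2)\log(\deg Q_k+1)$.

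\textbf{Bézout step and induction.} For $r=1$, Proposition~\ref{prop:Bezout}(i) gives an inequality contradicting the smallness hypothesis once $N,k$ are tuned with $D$ and $\log H$. For $r\ge2$, Proposition~\ref{prop:Bezout}(ii) produces an unmixed ideal $\J$ with $\hgt(\J)=m+2-r$ and explicit bounds on $\deg(\J)$, $\log H(\J)$ and $\log|\J(\omeg_k)|$. Transporting $\J$ back to $\omeg_0$ by $\mathcal A_k$ and applying the induction hypothesis for $r-1$, with new parameters $D'\asymp DN$ and $\log H'$, gives a contradiction. The exponents $\nu,\mu,\tau$ are exactly what makes this recursion close: the condition $\log H\ge\lambda^{\tau(r)}D^{\mu(r)}$ must imply the corresponding condition for $(r-1,D',H')$.

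We must also ensure $Q_k\notin\p_k$, since otherwise Bézout is vacuous. This follows from $Q_k(\omeg_k)\neq0$ together with ${\rm dist}(\omeg_k,\p_k)$ being small relative to $|Q_k(\omeg_k)|$, which is the second half of \eqref{eq:cond_appl_Bezout1}. The main obstacle is the auxiliary construction modulo $\mathfrak P$ together with the zero estimate. This is where $t<m$ genuinely differs from Nishioka's setting, and where the counting must give dimension $N^t$ rather than $N^m$. The parameter bookkeeping (choice of $N$ and $k$ in terms of $D$, $\log H$ and $r$) is lengthy but routine.
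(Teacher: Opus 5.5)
Your architecture is the paper's: a minimal failing $r$, passage to a prime component, dense auxiliary polynomials obtained from a Pad\'e approximant iterated along the Mahler system, arithmetic B\'ezout, and descent to $r-1$. The one structural difference is that you move the ideal $\p$ to $\omeg_k$ and then move $\J$ back, whereas the paper moves the polynomial. Its recursion $R_{k+1}(z,\X)=a(z)^NR_k(z^q,B(z)\X)$ produces $Q_{N,k}\in\Z[\X]$ whose value at the fixed point $\omeg_0$ is governed by $a_k(\alpha)^NR(\alpha^{q^k})$. So B\'ezout and the induction all take place at $\omeg_0$, using only the elimination tools recalled in Section~\ref{sec:M_func}. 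Your route can probably be made to work, but it needs extra lemmas the paper avoids: how $\deg$, $H$ and $|\I(\omeg)|$ behave under a $\K$-linear change of coordinates followed by a norm from $\K$ to $\Q$. It also makes the window for $k$ depend on $\p_k$, hence on $k$. In both routes, the passage from $R(\alpha^{q^k},\X)$ to a polynomial in $\Z[\X]$ needs a two-sided bound that a plain norm does not give, since conjugate factors may be tiny; the paper uses \cite[Lem.~4.1.9]{Ni_Liv}. Finally, the construction modulo $\mathfrak P$ is unnecessary. Build the approximant on $1,f_1,\ldots,f_t$ for a transcendence basis, as the paper does. Then $R\neq 0$ is automatic, and Lemma~\ref{lem:Multiplicity} with $M=N$ gives $\val(R)=\mathcal O(N^{t+1})$.

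The genuine gaps are quantitative, and they are the substance of this lemma.

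\textbf{(1) The auxiliary parameters.} Your data are inconsistent and, read literally, useless. With $\log H(Q_k)\ll q^kN^{t+1}$ and $\mathrm{ord}_0E\le cN^{t+1}$, the gain $G=-\log|Q_k(\omeg_k)|/\log H(Q_k)$ is $O(1)$. Since $k$ is chosen so that $-\log|Q|$ is at most about $X$, the error term $\log H(Q)\deg(\p)$ in Proposition~\ref{prop:Bezout} is only bounded by about $X\deg(\p)/G$. It must be at most $X/(8\sigma)$, so you need $G\gg D_1$. The correct set-up is $\deg_zR\le N$. This gives $\log H(Q)\ll Nq^k$ against $-\log|Q(\omeg_0)|\asymp N^{t+1}q^k$, i.e.\ $G\asymp N^t$. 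The choice $N=\lfloor\lambda_1^{1/(2t+2)}D_1^{1/(t-r_1+1)}\rfloor$ is made precisely so that $D_1N^{-t}\le\lambda_1^{-t/(2t+2)}$; this is the second inequality of \eqref{eq:claim}.

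\textbf{(2) The reduction to a prime.} Weighting by $\deg(\p)/\deg(\I)$ gives, in the worst case ($\deg\I\approx D$), only $-\log|\p(\omeg_0)|\gtrsim\lambda D^{\nu(r)-1}\log H\,\deg(\p)$. The term $D^{\nu(r)}\log H(\p)$ is lost. Yet B\'ezout must absorb $\deg(Q)\log H(\p)\asymp N\log H(\p)$, where $\log H(\p)$ can be of order $\log H$ while $\deg(\p)=1$, and $N^t\gg D$ by (1). The requirement $\lambda D^{\nu(r)-1}\gg N$ already fails for $r=1$. You need the paper's pigeonhole, with weights $k_i\bigl(D^{\nu}\log H(\p_i)+D^{\nu-1}\log H\deg(\p_i)\bigr)$, which keeps the hypothesis self-similar for $\p$.

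\textbf{(3) The choice of $N$ and $k$.} This is not routine filler. The hypothesis $\log H\ge\lambda^{\tau(r)}D^{\mu(r)}$ is used in two ways:
\begin{itemize}
\item to make the window for $k$ nonempty, i.e.\ to get $q^k\ge\gamma_2N^{t+1}$, which is needed for the two-sided bound on $Q$;
\item to transfer the hypothesis to $(r-1,\gamma_3ND_1,\eta_3\lambda_1^{1/2}N\log H_1)$.
\end{itemize}
You identify only the second role, and you verify neither.
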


This lemma is the analogue of \cite[Prop.~4.4.9]{Ni_Liv}, with the parameters $t,\lambda,D,\log H$ playing the roles of $m$, $\lambda^r$, $\lambda^{m-r}D^{m-r+1}$, and $\lambda^{m-r}D^{m-r}\log H$, respectively.

The strategy of the proof is as follows. We first construct a dense sequence of polynomials $Q_{N,k}\in \mathbb Z[\X]$ taking small values at $\omeg_0$. We then argue by contradiction: assuming that the conclusion fails for some ideal $\I$ with maximal height, we apply the arithmetic Bézout theorem to a suitable prime ideal associated with $\I$ and to one of the polynomials $Q_{N,k}$. This produces an ideal of larger height for which the lemma also fails, contradicting the maximality of the height. 

Throughout the proof, we let $\gamma_1,\gamma_2,\gamma_3,\ldots,\kappa_1,\kappa_2,\ldots,\eta_1,\eta_2,\ldots$ denote positive constants depending only on $f_1,\ldots,f_m$ and $\alpha$. Without loss of generality, we assume that $|\omeg_0|=1$.

\subsubsection{A multiplicity lemma}

In order to control the value of a polynomial at $\omeg_0$, we will use a refined version of Nishioka's multiplicity lemma \cite[Thm.~4.3]{Ni_Liv}. We let $\val$ denote the usual valuation, associated with the ideal $(z)$, for elements of $\C[[z]]$. The following result is proved in \cite[Thm.~V.1.1]{Fe19}.

\begin{lem}[Multiplicity Lemma]\label{lem:Multiplicity}
Let $M,N\geq 1$ be integers, and let 
$R_0 \in \C[z,\X]$ 
be such that $\deg_z (R_0)\leq M$ and $\deg_{\X}(R_0)\leq N$. If
\[
R(z):=R_0(z,f_1,\ldots,f_m)\neq 0\,,
\]
then
\[
\val (R) = \mathcal O(MN^{t})\,,
\]
where the implied constant depends only on $f_1,\ldots,f_m$.
\end{lem}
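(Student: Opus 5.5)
The plan is to adapt Nishioka's proof of the algebraically independent case (\cite[Thm.~4.3]{Ni_Liv}), replacing the ambient projective space over $\C(z)$ by the variety of relations of $f_1,\ldots,f_m$. Concretely, I would work over the field $\mathbf k:=\C(z)$, which is equipped with the $z$-adic valuation $\val$; the coefficients of $R_0$ lie in $\C$, so it is enough to prove the statement over $\C$. Let $\mathfrak P\subset \mathbf k[\X]$ be the homogeneous prime ideal of relations of $\omeg(z):=(1,f_1,\ldots,f_m)$ over $\mathbf k$; its projective variety has dimension $t$. The first key observation is that $\mathfrak P$ is stable under the Mahler twist $T\colon S(z,\X)\mapsto S(z^q,\widetilde A(z)\X)$, where $\widetilde A=\mathrm{diag}(1,A)$. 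Indeed $\omeg(z^q)=\widetilde A(z)\omeg(z)$, and $A$ is invertible. Moreover $T$ multiplies the valuation at $\omeg$ by $q$ up to a bounded shift: $\val\big((TS)(\omeg)\big)=q\,\val(S(\omeg))$. Passing to $T^k$ costs $z$-degree $\mathcal O(q^k(M+N))$, once denominators of $A$ are cleared, and keeps the $\X$-degree $N$.

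Next, I would set up a non-archimedean elimination theory over $\mathbf k[z]$. Here the role of $\log H$ is played by the $z$-degree, and the role of $-\log|\I(\omeg)|$ is played by the $z$-adic order of the Chow form of $\I$ evaluated at $\omeg$. This gives a $z$-adic analogue of Proposition~\ref{prop:Bezout}. If a prime $\p\supseteq\mathfrak P$ with $\p\neq\mathfrak P$ has $z$-adic value at $\omeg$ at least $X$, and $Q$ is homogeneous with large order at $\omeg$, then either $Q\in\p$ or one obtains an unmixed ideal of height $\hgt(\p)+1$. Its degree is at most $\deg\p\deg Q$, its $z$-degree is at most $\deg_z\p\deg Q+\deg_zQ\deg\p$, and its $z$-adic value is at least about $\min\{X,\val Q(\omeg)\}$ minus those same quantities. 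In the non-archimedean setting this is in fact simpler, since no $\log(\deg+1)$ or $m^2$ error terms appear. Starting from $\mathfrak P$ itself, of degree $\deg\mathfrak P=\mathcal O(1)$ and $z$-degree $\mathcal O(1)$, and with infinite value at $\omeg$, I intersect successively with $R^{\rm h}$ and then with twists $T^{k}R$ for suitable $k$. Since $R\notin\mathfrak P$, at least one twist lies outside each intermediate prime component.

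The counting then runs as follows. After $t$ intersections the result is a zero-dimensional cycle, that is, height $m$. Its degree is $\mathcal O(N^t)$, its $z$-degree is $\mathcal O(MN^{t-1}\cdot q^{k})$, and its value at $\omeg$ is at least about $q^{k}\val(R)$ minus the $z$-degree. This is contradicted by the $z$-adic Liouville inequality: a zero-dimensional ideal not vanishing at $\omeg$ has value at most its $z$-degree times $\mathcal O(\deg)$. Nonvanishing at $\omeg$ holds because the cycle contains no component through the generic point of $V(\mathfrak P)$, which lies over $\mathfrak P$. Balancing these bounds, with $k$ bounded independently of $M,N$, gives $\val(R)=\mathcal O(MN^t)$.

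The main obstacle I expect is choosing at each step a twist $T^{k}R$ that avoids every associated prime of the current unmixed ideal. Using only finitely many bounded $k$ will not suffice naively, since a prime containing $R$ could contain all its twists. My first attempt would be to use $T$-stability: if a prime $\p\supsetneq\mathfrak P$ of small degree contained $T^kR$ for all $k\le K$ with $K$ exceeding a bound in $\deg\p$, then, following Nishioka's argument, the ideal generated by these twists would be $T$-stable and hence contain a $T$-invariant ideal strictly containing $\mathfrak P$. Such an ideal would vanish on $\omeg$ by the invariance argument and minimality of $\mathfrak P$, which is impossible. Making this quantitative, so that $K$ stays bounded independently of $N$ and the $q^K$ factor is absorbed in the constant, is the step I would work hardest on.
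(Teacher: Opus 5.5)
The architecture matches the remark following the lemma: start from the ideal $\mathfrak P$ of relations of $\omeg=(1,f_1,\ldots,f_m)$, cut it by ${}^{\rm h}R_0$, then continue with Mahler twists as in Nishioka's proof. The paper itself only cites Fernandes's thesis. However, there is a genuine gap, and it sits exactly in the step that carries the content of the lemma: forcing a twist $T^kR$ out of the current prime component. You leave this step open, and the route you sketch rests on two false claims.

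It is not true that $R\notin\mathfrak P$ forces some twist outside every intermediate component. Nor is it true that a $T$-invariant ideal strictly containing $\mathfrak P$ must vanish at $\omeg$.
\begin{itemize}
\item Since $\widetilde A={\rm diag}(1,A)$, you have $T(X_0)=X_0$. So $\mathfrak P+(X_0)$ is always $T$-invariant and strictly contains $\mathfrak P$, yet it does not vanish at $\omeg$ because $\omega_0=1$.
\item Take $m=1$, $f_1$ transcendental with $f_1(z^q)=a(z)f_1(z)$, and $R_0=X_1S$. Then $(X_1)$ is a prime component of $({}^{\rm h}R_0)$ containing every twist. It is $T$-invariant, and $\omeg\notin V(X_1)$.
\end{itemize}

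What invariance actually gives is distance, not vanishing. Suppose $\Phi(\bbeta):=\widetilde A(z)^{-1}\bbeta(z^q)$ maps $V(\p)$ onto itself. Since $\Phi(\omeg)=\omeg$, a point $\bbeta\in V(\p)$ with $\val(\omeg\wedge\bbeta)=\theta$ (after normalization) yields $\Phi(\bbeta)\in V(\p)$ at order $\ge q\theta-c$. Hence $\theta\le c/(q-1)$ unless $\omeg\in V(\p)$.

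A correct argument must therefore do two things. First, using the $z$-adic analogue of Proposition~\ref{prop:decompo_primaire}, select a prime component whose value at $\omeg$ is large compared with its degree and $z$-height; this excludes stable or periodic components. Second, prove quantitatively that such a close but non-stable component omits some twist $T^kR$ whose $z$-degree stays controlled. The second point is the heart of Nishioka's multiplicity estimate, and your proposal does not supply it.

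The endgame is also misstated. For a general transcendental $\omeg$ there is no bound of the form ``value of a zero-dimensional ideal at $\omeg\le$ its $z$-degree times $\mathcal O(\deg)$''. For $f=\sum_n z^{n!}$, the single point $\bbeta=(1,\sum_{n\le K}z^{n!})$ has $z$-degree $K!$ but $\val(\omeg\wedge\bbeta)=(K+1)!$. For Mahler $\omeg$, such a bound is itself a case of the lemma (take $t=m=1$, $N=1$).

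The genuine Liouville step is case (i) of the $z$-adic B\'ezout theorem. You cut the zero-dimensional prime $\p$ by one more polynomial $Q=T^kR\notin\p$. The resultant is then a nonzero element of $\C[z]$, so its order at $0$ is at most its degree, which is $\le h(\p)\deg Q+\deg_zQ\,\deg\p$, where $h(\p)$ is the $z$-degree of the Chow form of $\p$. So $t+1$ cuts are needed, not $t$.

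Two bookkeeping consequences follow.
\begin{itemize}
\item Clearing the denominators of $A$ gives $\deg_zT^kR\asymp q^k(M+N)$; you noted this and then dropped it. Once the second point above is settled with $q^K$ bounded, the count becomes $\val(R)=\mathcal O\bigl(q^K(M+N)N^t\bigr)$. This is $\mathcal O(MN^t)$ only for $M\ge N$: enough for the Pad\'e construction, where $M=N$, but not for the full range stated.
\item Your own count, a $z$-degree $\mathcal O(MN^{t-1}q^k)$ times a degree $\mathcal O(N^t)$, would give only $MN^{2t-1}$.
\end{itemize}
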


\begin{rem}
Alternatively, this result can be obtained by a slight modification of the proof of \cite[Thm.~4.3]{Ni_Liv}. 
Let $\mathfrak p_z\subset \C[z][\X]$ denote the homogeneous ideal generated by the homogeneous polynomials vanishing at $(1,f_1,\ldots,f_m)$. 
Let $\prescript{\rm h}{}{R_0}\in \C[z][\X]$ be the homogenization of $R_0$ (with respect to $\X$). The key modification is to replace the ideal $\I$ used in \cite{Ni_Liv} by the ideal obtained from the arithmetic Bézout theorem over $\C(z)$ \cite[Thm.~4.1.7]{Ni_Liv}, 
applied to $\mathfrak p_z$ and $\prescript{\rm h}{}{R_0}$.
\end{rem}
 
 \subsubsection{A dense sequence of good polynomial approximations}

The purpose of Proposition~\ref{prop:polynôme_aux} is to construct
many polynomials taking small values at $\omeg_0$, with respect to their degrees and heights.

\begin{prop}
\label{prop:polynôme_aux}
There exist positive integers $\gamma_1,\ldots,\gamma_{6}$ such that, for every $N\geq \gamma_1$ and every integer $k$ with $q^k\geq \gamma_2 N^{t+1}$, there exists a homogeneous polynomial $Q_{N,k}\in \Z[\X]$ satisfying \eqref{eq:cond_appl_Bezout1} and
\begin{equation}\label{eq:deg_height_Qk}
\deg(Q_{N,k}) \leq \gamma_{3}N,\qquad \log H(Q_{N,k})\leq \gamma_{4}Nq^k\,,
\end{equation}
and
\begin{equation}\label{eq:encadr_Qk}
-\gamma_{5}N^{t+1}q^k \leq \log | Q_{N,k}(\omeg_0) | \leq -\gamma_{6}N^{t+1}q^k\,.
\end{equation}
\end{prop}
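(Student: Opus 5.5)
We follow the classical Mahler-method construction of an auxiliary polynomial, as in \cite[Chap.~4]{Ni_Liv}, and iterate the functional system. First we reduce to a transcendence basis. After reordering, assume $f_1,\ldots,f_t$ are algebraically independent over $\Q(z)$. Rather than working with all of $\Q(z)[\X]$, we work modulo the homogeneous relation ideal $\mathfrak p_z$ of $(1,f_1,\ldots,f_m)$. The relevant dimension count is that of the space of polynomials of degree $\le N$ in $f_1,\ldots,f_m$ restricted to that variety, which grows like $N^{t}$ by the Hilbert polynomial of $\mathfrak p_z$. Concretely, we consider polynomials $R_0(z,\X)=\sum_{j\le N}\sum_{\boldsymbol\lambda} a_{j,\boldsymbol\lambda} z^j \X^{\boldsymbol\lambda}$ with $|\boldsymbol\lambda|\le N$ and coefficients in $\Z_\K$.

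\textbf{Step 1 (Siegel's lemma).} Using \eqref{eq:estimation_coeff}, we impose the vanishing of the first $\asymp N^{t+1}$ Taylor coefficients of $R(z)=R_0(z,f_1,\ldots,f_m)$. There are about $N\cdot N^{m}$ unknowns. Projecting to a basis of the quotient by $\mathfrak p_z$, we take about $N\cdot \dim(\text{degree-}N\text{ part})\gg N^{t+1}$ unknowns. This yields $R\ne 0$ with coefficients of height $e^{O(N\log N)}$ or $e^{O(N)}$, and $\val(R)\ge \gamma N^{t+1}$. Lemma~\ref{lem:Multiplicity} gives the matching upper bound $\val(R)\le \gamma' N^{t+1}$.

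\textbf{Step 2 (iteration).} Iterating the system gives $Y(z^{q^k})=A_k(z)Y(z)$ with $A_k=A(z^{q^{k-1}})\cdots A(z)$. Set $\omeg_0=(1,Y(\alpha))$, and write $\alpha^{q^k}$ in terms of $Y(\alpha)=A_k(\alpha)^{-1}Y(\alpha^{q^k})$. By regularity of $\alpha$, $A_k(\alpha)^{-1}$ exists. Define
\[
Q_{N,k}(\X)=\delta_k\,\prescript{\rm h}{}{R_0}\bigl(\alpha^{q^k},A_k(\alpha)^{-1}\X\bigr)\text{-type polynomial},
\]
or more naturally $R_0(\alpha^{q^k},\,\cdot\,)$ composed with the linear map $\X\mapsto A_k(\alpha)\X$ (after clearing denominators $\delta_k$ and homogenizing), so that $Q_{N,k}(\omeg_0)=\delta_k R(\alpha^{q^k})$. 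The entries of $A_k(\alpha)$ are algebraic with house and denominators $e^{O(q^k)}$, so $\deg Q_{N,k}\le \gamma_3 N$ and $\log H\le \gamma_4 Nq^k$. Taking a norm from $\K(\alpha)$ to $\Q$ (a product over conjugates) makes the coefficients rational integers while keeping these bounds; the other conjugates contribute only factors $e^{O(Nq^k)}$.

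\textbf{Step 3 (two-sided estimate).} Since $R(z)=c z^{v}(1+O(z))$ with $v=\val(R)\asymp N^{t+1}$, we get $\log|R(\alpha^{q^k})| = v q^k\log|\alpha| + O(N + \log H(R))$. This holds once $|\alpha|^{q^k}$ is small relative to the tail; the tail is controlled by $\rho^{n}$ growth. This is where $q^k\ge\gamma_2N^{t+1}$ (or at least $q^k\gg N$) enters: it ensures the leading term dominates and the $O(Nq^k)$ conjugate and denominator losses are absorbed. This gives \eqref{eq:encadr_Qk}.

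Condition \eqref{eq:cond_appl_Bezout1} then follows. The left inequality holds because $\log H(Q)+(2m+2)\log(\deg Q+1)\le \gamma_4Nq^k+O(\log N)\le \gamma_6N^{t+1}q^k$ for $N\ge\gamma_1$. For the right-hand side, with $\p$ the prime of $\omeg_0$, we have $X=+\infty$ and dist $=0$, so it is vacuous.

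\textbf{Main obstacle.} The main obstacle is the nonvanishing together with the \emph{upper} bound on $\val(R)$ when $t<m$. Siegel's lemma must be run on a complement of $\mathfrak p_z$ so that $R\neq0$ as a function, and the parameter count must still beat $N^{t+1}$. I would handle this by using monomials in $f_1,\ldots,f_t$ only, times a fixed basis of $\Q(z)(f_1,\ldots,f_m)$ over $\Q(z)(f_1,\ldots,f_t)$, with bounded degrees. The lower bound $-\gamma_5$ is then exactly Lemma~\ref{lem:Multiplicity}.
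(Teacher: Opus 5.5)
\textbf{Verdict: genuine gap.} Your Steps 1 and 2 follow the paper's construction up to the polynomial with coefficients in $\K$, but the norm used to pass to $\Z[\X]$ does not give the lower bound in \eqref{eq:encadr_Qk}.

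\textbf{What matches.} Your construction is the paper's until that point. You take a Pad\'e approximant in $1,f_1,\ldots,f_t$ alone; the extra basis of the algebraic extension is unnecessary. You get $\val(R)=\mathcal O(N^{t+1})$ from Lemma~\ref{lem:Multiplicity}. You iterate to $a_k(\alpha)^N R_0(\alpha^{q^k},B_k(\alpha)\X)$, whose value at $\omeg_0$ is $a_k(\alpha)^N R(\alpha^{q^k})$.

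\textbf{The gap.} Let $P\in\K[\X]$ be that polynomial and $Q=\prod_\sigma \sigma(P)$ its norm. Then
\[
|Q(\omeg_0)|=|P(\omeg_0)|\prod_{\sigma\neq {\rm id}}|\sigma(P)(\omeg_0)|\,.
\]
Your remark that the other conjugates contribute factors $e^{O(Nq^k)}$ is only an \emph{upper} bound. It gives the right-hand inequality in \eqref{eq:encadr_Qk}, but neither the left-hand one nor even $Q(\omeg_0)\neq 0$. For $\sigma\neq{\rm id}$, $\sigma(P)(\omeg_0)$ is a conjugate polynomial evaluated at the \emph{same} point $\omeg_0$. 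The functional equation relates $\sigma(P)$ to the conjugate functions at $\sigma(\alpha)$ (when that even makes sense), not to $\omeg_0$. Lemma~\ref{lem:Multiplicity} only controls the factor $\sigma={\rm id}$.

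When $t=m$, these conjugate values are nonzero, but bounding them from below is itself an algebraic independence measure at $\omeg_0$, which is circular. When $t<m$, they can vanish, because the ideal of $\omeg_0$ in $\Q[\X]$ need not be Galois stable. For example, if $f_2=\sqrt2 f_1$, then $P=X_2+\sqrt2X_1$ has $P(\omeg_0)\neq0$, while its norm $X_2^2-2X_1^2$ vanishes at $\omeg_0$.

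\textbf{Why the lower bound matters.} In the proof of Lemma~\ref{lem:Ideal_Mahler2}, condition \eqref{eq:cond_appl_Bezout1} is used with $\p$ a prime component of the putative counterexample ideal, not the ideal of $\omeg_0$. So its right-hand side is not vacuous. It follows from $-\log|Q_{N,k}(\omeg_0)|\le\gamma_5N^{t+1}q^k$ together with the choice \eqref{eq:choice_k} of $k$, and it is needed to apply Proposition~\ref{prop:Bezout}, where it forces $Q\notin\p$.

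\textbf{What the paper does instead.} It does not use a bare norm. It writes $a^{\lceil\kappa_6q^kN\rceil}R_k(\alpha,\X)=P_k(\xi,\X)$ with $P_k\in\Z[Y,\X]$, for an integral generator $\xi$ of $\K$. It proves the two-sided estimate for $P_k(\xi,\omeg_0)$. It then appeals to \cite[Lem.~4.1.9]{Ni_Liv} to obtain $Q_{N,k}\in\Z[\X]$ with both bounds. You need that lemma, or some other device that controls the conjugate factors.

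\textbf{Two smaller points.} First, in Step~3 the error term is $O(N^{t+1})$, not $O(N+\log H(R))$. The leading coefficient $r_v$ of $R$, with $v=\val(R)$, has a denominator of size $D^{v}$, so Liouville only gives $\log|r_v|\ge -O(N^{t+1})$. Absorbing this, together with the tail, is exactly why $q^k\ge\gamma_2N^{t+1}$ is needed rather than merely $q^k\gg N$. Second, you should record that $\log|a_k(\alpha)|\ge -O(q^k)$; this is where the regularity of $\alpha$ enters.
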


\begin{proof}

This proposition corresponds to \cite[Prop.~4.4.7]{Ni_Liv}. The main difference is that we do not assume that the functions $f_1,\ldots,f_m$ are algebraically independent. The parameter $m$, which  corresponds to the transcendence degree of these functions in \cite{Ni_Liv}, must therefore be replaced by $t$ in \eqref{eq:encadr_Qk}. Accordingly, we may assume without loss of generality that $f_1,\ldots,f_t$ are algebraically independent over $\Q(z)$.

Let $\K$ be a number field containing $\alpha$, the coefficients of $f_1,\ldots,f_m$, and those of the matrix $A(z)$. Let $\OK$ denote its ring of integers. We proceed as in \cite[p.~139--141]{Ni_Liv}, but construct a Padé approximant for $1,f_1,\ldots,f_t$, denoted by
\[
R(z)=R_0(z,1,f_1(z),\ldots,f_t(z))\,,
\]
where $R_0(z,\X)\in \OK[z,\X]$ is homogeneous in $\X$, and satisfies
\begin{eqnarray}\label{eq:R0}
\nonumber \deg_z(R_0)\leq N\,,\qquad \deg_{\X}(R_0)\leq N\,,
\\
\log \overline{H}(R_0)=\mathcal O(N\log N)\,,  \qquad 
\val(R)\geq \kappa_1 N^{t+1},
\end{eqnarray}
for some constant $\kappa_1>0$. Here $\overline{H}(R_0)$ denotes the maximum of the absolute values of the coefficients of $R_0$ and their Galois conjugates. 

The existence of such an $R_0$ follows from Siegel's lemma together with the estimates \eqref{eq:estimation_coeff}. Since Lemma~\ref{lem:Multiplicity} (applied with $M=N$) gives $\val(R)=\mathcal O(N^{t+1})$, we deduce that
\begin{equation}\label{eq:auxiliary_func}
-\kappa_2 q^k N^{t+1} \leq \log |R(\alpha^{q^k})| \leq -\kappa_3 q^k N^{t+1}\,,
\end{equation}
for all $N\geq \kappa_4$ and $q^k\geq \kappa_5 N^{t+1}$, for some $\kappa_2,\ldots,\kappa_5>0$.

Let $a(z)$ be such that $a(z)A(z)$ has entries in $\OK[z]$. Using the Mahler system
\[
\begin{pmatrix}
1\\ f_1(z^q)\\ \vdots \\ f_m(z^q)
\end{pmatrix}
=
B(z)
\begin{pmatrix}
1\\ f_1(z)\\ \vdots \\ f_m(z)
\end{pmatrix}\,,
\qquad B(z):=\left(\begin{array}{c|ccc} 1 & 0 &\cdots &0\\ \hline 0 &&& \\ \vdots &&A(z) &
	\\ 0 & &&\end{array}\right)\,,
\]
we define recursively a sequence $(R_k(z,\X))_{k\geq 0}\subset \OK[z,\X]$ by
\begin{equation}\label{eq:R_krec}
R_{k+1}(z,\X):=a(z)^N R_k(z^q,B(z)\X)\,.
\end{equation}
Then
\begin{equation}\label{eq:R_k}
R_k(z,1,f_1(z),\ldots,f_t(z)) = a_k(z)^N R(z^{q^k})\,,\qquad k\geq 0\,,
\end{equation}
where $a_k(z)=a(z)a(z^q)\cdots a(z^{q^{k-1}})$ and $a_0(z)=1$.

We now construct polynomials with integer coefficients. By construction, $\deg_z R_k \leq \kappa_6 q^k N$, for some $\kappa_6>0$. Let $\xi$ be an algebraic integer generating $\K$ over $\Q$, and let $a\in \Z$ be such that $a\alpha \in \Z[\xi]$ and $a\OK \subset \Z[\xi]$. For each $k\geq 0$, let $P_k \in \Z[Y,\X]$ be the polynomial, homogeneous in $\X$, such that
\[
P_k(\xi,\X)=a^{\lceil \kappa_6 q^k N\rceil} R_k(\alpha,\X)\,,
\qquad
\deg_Y(P_k) \leq [\K:\Q]\,.
\]

Combining \eqref{eq:R0}, \eqref{eq:auxiliary_func}, \eqref{eq:R_krec}, and \eqref{eq:R_k}, we obtain
\[
\deg_{\X} P_k \leq N,\qquad \log H(P_k)=\mathcal O(q^k N)\,,
\]
and
\[
-\kappa_7 q^k N^{t+1} \leq \log |P_k(\xi,\omeg_0)| \leq -\kappa_8 q^k N^{t+1}\,.
\]

We may then apply \cite[Lem.~4.1.9]{Ni_Liv} to the polynomials $P_k$ to obtain the desired polynomials $Q_{N,k}$.
\end{proof}

\subsubsection{Proof of Lemma~\ref{lem:Ideal_Mahler2}}

We argue by contradiction. Assume that the lemma does not hold, and let $r_1$ be the smallest integer $r\in \{1,\ldots,t\}$ for which it fails. Then there exist arbitrarily large real numbers $\lambda_1$ such that, for each $\lambda_1$, there exist arbitrarily large real numbers $D_1$, a real number $H_1$ satisfying
\begin{equation}\label{eq:H0}
\log H_1 \geq \lambda_1^{\tau(r_1)}D_1^{\mu(r_1)}\,,
\end{equation}
and a proper unmixed homogeneous ideal $\I$ such that
\begin{align}\label{eq:majo_value_I}
\hgt(\I)&=m+1-r_1,\quad \deg(\I)\leq D_1,\quad H(\I)\leq H_1\,,\\
\nonumber
\log |\I(\omeg_0)| &< -\lambda_1 \left(D_1^{\nu(r_1)}\log H(\I)+D_1^{\nu(r_1)-1}\log H_1\,\deg(\I)\right)\,.
\end{align}

Fix such $\lambda_1,D_1,H_1$, and $\I$. Using the primary decomposition of $\I$ and Proposition~\ref{prop:decompo_primaire}, we first extract a prime ideal with similar properties.

Let $\p_1,\ldots,\p_s$ be the proper prime ideals associated with $\I$. We claim that there exists $i_1\in\{1,\ldots,s\}$ such that
\[
\log |\p_{i_1}(\omeg_0)| < -\tfrac{1}{2}\lambda_1 \left(D_1^{\nu(r_1)}\log H(\p_{i_1}) + D_1^{\nu(r_1)-1}\log H_1\,\deg(\p_{i_1})\right).
\]

Suppose the contrary. Then, by Proposition~\ref{prop:decompo_primaire},
\begin{align*}
\log |\I(\omeg_0)|
&\geq \sum_{i=1}^s k_i \log |\p_i(\omeg_0)| - m^3\deg(\I)\\
&> -\tfrac{1}{2}\lambda_1 \Bigl(D_1^{\nu(r_1)} \sum_{i=1}^s k_i \log H(\p_i)
+ D_1^{\nu(r_1)-1}\log H_1 \sum_{i=1}^s k_i \deg(\p_i)\Bigr) \\
&\hspace{8cm} - m^3\deg(\I)\\
&\geq -\tfrac{1}{2}\lambda_1 \Bigl(D_1^{\nu(r_1)}\log H(\I)
+ D_1^{\nu(r_1)}m^2\deg(\I)
\\ &\hspace{3.8cm}+ D_1^{\nu(r_1)-1}\log H_1\,\deg(\I)\Bigr) 
  - m^3\deg(\I)\,.
\end{align*}

Combining with \eqref{eq:majo_value_I}, we obtain
\begin{eqnarray}\label{eq:D_1}
\tfrac{1}{2}\lambda_1\left(D_1^{\nu(r_1)}\log H(\I)
+ D_1^{\nu(r_1)-1}\log H_1\,\deg(\I)\right) \hspace{2.5cm}  \\
\nonumber \hspace{6.4cm}< \tfrac{1}{2}\lambda_1 D_1^{\nu(r_1)}m^2\deg(\I) + m^3\deg(\I)\,.
\end{eqnarray}

Since $\mu(r_1)>1$, \eqref{eq:H0} implies $\log H_1 \geq D_1^{1+\varepsilon}$ for some $\varepsilon>0$. Hence \eqref{eq:D_1} is impossible for $D_1$ large enough. This proves the claim.

We now set $\p=\p_{i_1}$ and define
\begin{equation}\label{eq:mino_prime_ideal}
X := \tfrac{1}{2}\lambda_1 \left(D_1^{\nu(r_1)}\log H(\p)
+ D_1^{\nu(r_1)-1}\log H_1\,\deg(\p)\right)
> -\log |\p(\omeg_0)|.
\end{equation}

Since $\p$ is a proper prime ideal, it is unmixed. Hence
Proposition~\ref{prop:decompo_primaire} applies and yields
\begin{equation}\label{eq:parameter_p0}
\deg(\p)\le D_1\,,\qquad
\log H(\p)\le \log H_1 + m^2D_1 \le 2\log H_1\,,
\end{equation}
provided that $D_1$, and hence $H_1$, is sufficiently large. Using the prime ideal $\p$ together with a suitably chosen polynomial $Q$ from the sequence $(Q_{N,k})$ provided by Proposition~\ref{prop:polynôme_aux}, we will apply the arithmetic Bézout theorem. If $r_1 \geq 2$, this yields a proper unmixed homogeneous ideal $\J$ together with upper bounds for $\deg(\J)$, $\log H(\J)$, and $\log |\J(\omeg_0)|$. On the other hand, since $r_1$ is minimal, Lemma~\ref{lem:Ideal_Mahler2} applies to $\J$ and provides a lower bound for $\log |\J(\omeg_0)|$, which contradicts the preceding upper bound. If $r_1=1$, then the inequality corresponding to the case $r=1$ in Proposition~\ref{prop:Bezout} will be incompatible with \eqref{eq:majo_value_I}. 

The main difficulty is to choose the parameters $N$ and $k$ appropriately. On the one hand, they must be large enough for Proposition~\ref{prop:polynôme_aux} to apply. On the other hand, while $N$ must be sufficiently large, the quantity $q^kN^{t+1}$ must remain small enough for the arithmetic Bézout theorem to be applicable. Finally, these parameters must be balanced so as to produce a contradiction. This explains our choice of the functions $\nu(r)$, $\mu(r)$, and $\tau(r)$.

We now choose parameters $N$ and $k$. 
Let $\gamma_1,\ldots,\gamma_6$ be the positive constants given by Proposition~\ref{prop:polynôme_aux}. Assume that $D_1$ is large enough so that
\begin{equation}\label{eq:def_N}
N:= \left\lfloor \lambda_1^{\frac{1}{2t+2}} D_1^{\frac{1}{t-r_1+1}} \right\rfloor \geq \gamma_1\,.
\end{equation}
For $D_1$ sufficiently large, we have
\[
\gamma_2\gamma_5 q N^{2t+2}
\leq
\frac{1}{5r_1}\lambda_1^{\tau(r_1)+1}
D_1^{\frac{2t+r_1+2}{t-r_1+1}}\,.
\]
On the other hand, it follows from Lemma~\ref{lem:Livouille_Elimination},
\eqref{eq:H0}, and \eqref{eq:mino_prime_ideal} that
\[
-\frac{1}{2}\log {\rm dist}(\omeg_0,\p)
>
\frac{1}{4r_1}\lambda_1 D_1^{\nu(r_1)-1}\log H_1 - 3m^2
\geq
\frac{1}{5r_1}\lambda_1^{\tau(r_1)+1}
D_1^{\frac{2t+r_1+2}{t-r_1+1}}\,.
\]
Therefore, by the definition of $X$,
\begin{equation*}
\min \left\{X,-\frac{1}{2}\log {\rm dist}(\omeg_0,\p)\right\}
\geq
\frac{1}{5r_1}\lambda_1^{\tau(r_1)+1}
D_1^{\frac{2t+r_1+2}{t-r_1+1}}
\geq
\gamma_2\gamma_5 q N^{2t+2}\,.
\end{equation*}

Let $k$ be the unique integer such that
\begin{equation}\label{eq:choice_k}
\gamma_5 N^{t+1} q^k
\leq
\min \left\{X,-\frac{1}{2}\log {\rm dist}(\omeg_0,\p)\right\}
\leq
\gamma_5 N^{t+1} q^{k+1}\,.
\end{equation}
Then
\[
\gamma_5 N^{t+1} q^{k+1} \geq \gamma_2\gamma_5 q N^{2t+2}\,,
\]
and therefore
\begin{equation}\label{eq:majo_N}
q^k \geq \gamma_2 N^{t+1}\,.
\end{equation}
By \eqref{eq:def_N} and \eqref{eq:majo_N}, Proposition~\ref{prop:polynôme_aux}
applies and yields a polynomial $Q=Q_{N,k}$.
Moreover, Proposition~\ref{prop:polynôme_aux} together with
\eqref{eq:choice_k} shows that the condition \eqref{eq:cond_appl_Bezout1}
in Proposition~\ref{prop:Bezout} is satisfied.

From Proposition~\ref{prop:polynôme_aux}, we obtain
\begin{multline}\label{eq:majo_pour_J}
\log H(\p)\deg(Q)+\log H(Q)\deg(\p) \\ \hspace{1.7cm}\max\{8m^2;(r_1+1)m\}\deg(Q)\deg(\p) 
 \leq \eta_1N(\log H(\p)+q^k\deg(\p))\,,
\end{multline}
for some $\eta_1>0$.

Let
\[
\sigma := \frac{\min\left\{X,-\tfrac{1}{2}\log {\rm dist}(\omeg_0,\p)\right\}}{-\log |Q(\omeg_0)|}\,\cdot
\]
Proposition~\ref{prop:polynôme_aux} and \eqref{eq:choice_k} imply that
\[
\sigma \le \frac{\gamma_5}{\gamma_6}\,q.
\]
In particular, $\sigma$ is bounded independently of $\lambda_1$ and $D_1$.

We claim that, if $\lambda_1$ is sufficiently large, then
\begin{equation}\label{eq:claim}
\eta_1 N \log H(\p) \le \frac{X}{8\sigma}
\qquad \mbox{and} \qquad
\eta_1 N q^k \deg(\p) \le \frac{X}{8\sigma}\,\cdot
\end{equation}
We now prove this claim.

By the definition of $X$, and since $\nu(r_1)$ is greater than or equal to the exponent of $D_1$ in \eqref{eq:def_N}, we have
\[
\frac{X}{8\sigma}
\ge
\frac{1}{16\sigma}\lambda_1 D_1^{\nu(r_1)}\log H(\p)
\ge
\eta_1 N \log H(\p)\,,
\]
provided that $\lambda_1$ is sufficiently large. This proves the first inequality in \eqref{eq:claim}.

For the second one, observe that the left-hand inequality in \eqref{eq:choice_k}, together with $\deg(\p)\le D_1$, implies that
\[
\eta_1 N q^k \deg(\p)
\le
\eta_1 \gamma_5^{-1} X\, D_1 N^{-t}\,.
\]
By the definition of $N$, if $\lambda_1$ is sufficiently large, then
\[
\eta_1 \gamma_5^{-1} D_1 N^{-t}\le \frac{1}{8\sigma}\,,
\]
which proves the second inequality in \eqref{eq:claim}. 
 Hence,
\begin{equation}\label{eq:comparaison_X}
\eta_1 N\bigl(\log H(\p) + q^k\deg(\p)\bigr)
\leq \tfrac{X}{4\sigma}\,\cdot
\end{equation}

We now distinguish two cases.

Suppose first that $r_1=1$. Then Proposition~\ref{prop:Bezout}, together with \eqref{eq:majo_pour_J}, yields
\[
\frac{X}{2\sigma} \leq \eta_1 N\bigl(\log H(\p)+q^k\deg(\p)\bigr)\, ,
\]
which contradicts \eqref{eq:comparaison_X}.

Suppose now that $r_1\geq 2$. Then Proposition~\ref{prop:Bezout}, together with \eqref{eq:majo_pour_J}, yields a proper unmixed homogeneous ideal $\J$ with $\hgt(\J)=\hgt(\p)+1$ such that
\begin{equation}\label{eq:deg_H_J}
\deg(\J) \leq \deg(\p)\deg(Q),\qquad
\log H(\J)\leq \eta_1N\bigl(\log H(\p)+q^k\deg(\p)\bigr)\,,
\end{equation}
and
\begin{equation}\label{eq:mino_Jomeg}
-\log |\J(\omeg_0)|
\geq
\frac{X}{2\sigma}-\eta_1N\bigl(\log H(\p)+q^k\deg(\p)\bigr)
\geq
\frac{X}{4\sigma}\,,
\end{equation}
where the last inequality follows from \eqref{eq:comparaison_X}.

Using Proposition~\ref{prop:polynôme_aux}, \eqref{eq:parameter_p0}, and \eqref{eq:deg_H_J}, we obtain
\begin{equation}\label{eq:deg_J}
\deg(\J)\leq \gamma_3N\deg(\p)\leq \gamma_3ND_1=:D\,.
\end{equation}
Furthermore, \eqref{eq:mino_prime_ideal}, \eqref{eq:def_N}, and \eqref{eq:choice_k} imply that
\[
q^kD_1\leq \eta_2 \lambda_1^{1/2}\log H_1
\]
for some $\eta_2>0$. Hence, combining \eqref{eq:majo_pour_J} and \eqref{eq:deg_H_J}, we obtain
\begin{equation}\label{eq:H_J}
\log H(\J)\leq \eta_3 \lambda_1^{1/2}N\log H_1 =: \log H
\end{equation}
for some $\eta_3>0$.

Set $r=r_1-1 \in \{1,\ldots,t-1\}$. Since $r<r_1$, Lemma~\ref{lem:Ideal_Mahler2} holds for $r$. Let $\lambda$ and $D_0$ be the corresponding constants. Assume that $D_1$ is large enough so that $D \ge D_0$.

Using \eqref{eq:H0}, \eqref{eq:def_N}, and \eqref{eq:H_J}, one checks that
\[
\log H \ge \lambda^{\tau(r)}D^{\mu(r)}\,,
\]
provided that $\lambda_1$ is sufficiently large.
Since $\hgt(\J)=m+1-r$, the ideal $\J$ satisfies the assumptions of Lemma~\ref{lem:Ideal_Mahler2}, and therefore
\[
-\log |\J(\omeg_0)|
\le
\lambda\left(D^{\nu(r)}\log H(\J)+D^{\nu(r)-1}\log H\,\deg(\J)\right)\,.
\]

Using moreover the bound
\[
q^kD_1\le \eta_2 \lambda_1^{1/2}\log H_1\,,
\]
the upper bounds \eqref{eq:deg_H_J}, the estimate $\deg(Q)\le \gamma_3N$, and the definitions of $D$ and $\log H$, we obtain the existence of positive constants $\eta_4,\eta_5$ such that
\begin{align*}
-\log |\J(\omeg_0)|
&\le
\eta_4 N^{\nu(r)+1}D_1^{\nu(r)}
\bigl(\log H(\p)+q^k\deg(\p)+D_1^{-1}\log H_1\,\deg(\p)\bigr)
\\
&\le
\eta_5 \lambda_1^{1/2}N^{\nu(r)+1}D_1^{\nu(r)}
\bigl(\log H(\p)+D_1^{-1}\log H_1\,\deg(\p)\bigr)\,.
\end{align*}

Since
\[
N^{\nu(r)+1}D_1^{\nu(r)}
\le
\lambda_1^{\frac{\nu(r)+1}{2t+2}}D_1^{\nu(r_1)}
\qquad\text{and}\qquad
\frac{\nu(r)+1}{2t+2}<\frac12 \,,
\]
it follows that
\begin{align*}
-\log |\J(\omeg_0)|
&\le
\eta_5\lambda_1^{1-\varepsilon}
\left(D_1^{\nu(r_1)}\log H(\p)+D_1^{\nu(r_1)-1}\log H_1\,\deg(\p)\right)
\\
&\le
2\eta_5\lambda_1^{-\varepsilon}X
\end{align*}
for some $\varepsilon>0$. This contradicts \eqref{eq:mino_Jomeg} when $\lambda_1$ is sufficiently large.
\qed

\subsection{End of the proof of Theorem~\ref{thm:measure_system}}\label{sec:2nd_regime}

In this section, we continue under the assumptions of Theorem~\ref{thm:measure_system} and with the notation of Section~\ref{sec:first_regime}. In particular, we set
\[
\omeg_0 := (1,f_1(\alpha),\ldots,f_m(\alpha))\,,
\]
and we let $t$ denote the transcendence degree of $\Q(\omeg_0)$.

In the proof of Theorem~\ref{thm:measure_1st_regime}, we established that one may take
\[
C_2(d)=e^{-C_1 d^{t+1}}
\]
in the conclusion of Theorem~\ref{thm:measure_system}, provided that $\log H(P)$ is sufficiently large compared with $\deg(P)$. We now explain how to obtain the bound
\[
C_2(d)=e^{-C_1 d^{2t+2}}
\]
announced in Theorem~\ref{thm:measure_system} for an arbitrary polynomial $P$. Thus, it remains to consider the case where $\log H(P)$ is small compared with $\deg(P)$. For this, we need the following variant of Lemma~\ref{lem:Ideal_Mahler2}.

\begin{lem}\label{prop:majo_ideal_bis}
Let $r \in \{1,\ldots,t\}$. There exist positive constants $\lambda$ and $D_0$ such that, for every $D\geq D_0$ and every $\delta$ satisfying
\[
\frac{2t-r+2}{t-r+1}\leq \delta \leq \frac{3t-r+3}{t-r+1}\,,
\]
the following holds. For every proper unmixed homogeneous ideal $\I \subset \Z[\X]$ with
\[
\hgt(\I)=m+1-r\,,\qquad
\deg(\I) \leq D\,,\qquad
\log H(\I) \leq D^{\delta},
\]
we have
\begin{equation}\label{eq:mino_Iomega_bis}
\log |\I(\omeg_0)| \geq -\lambda \left(D^{\frac{r}{t-r+1}}\log H(\I)+D^{\frac{r}{t-r+1}-1+\delta}\deg(\I)\right)\,.
\end{equation}
\end{lem}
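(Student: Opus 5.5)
We will prove Lemma~\ref{prop:majo_ideal_bis} by repeating the argument used for Lemma~\ref{lem:Ideal_Mahler2}, with $\log H_1$ replaced by $D_1^{\delta}$. We argue by contradiction and induction on $r$. Let $r_1$ be the smallest $r$ for which the statement fails. Then there are arbitrarily large $\lambda_1$, arbitrarily large $D_1$, an exponent $\delta\in[\frac{2t-r_1+2}{t-r_1+1},\frac{3t-r_1+3}{t-r_1+1}]$, and an ideal $\I$ with $\hgt(\I)=m+1-r_1$, $\deg(\I)\le D_1$ and $\log H(\I)\le D_1^\delta$, such that
\[
\log|\I(\omeg_0)|<-\lambda_1\bigl(D_1^{\nu(r_1)}\log H(\I)+D_1^{\nu(r_1)-1+\delta}\deg(\I)\bigr).
\]
We first extract an associated prime $\p$ of $\I$ satisfying the same inequality with $\lambda_1/2$, using Proposition~\ref{prop:decompo_primaire} exactly as before. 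The error term $\lambda_1D_1^{\nu(r_1)}m^2\deg(\I)$ is absorbed because $\delta\ge\frac{2t-r_1+2}{t-r_1+1}>1$. This prime gives $X$ as in \eqref{eq:mino_prime_ideal}, with $\log H_1$ replaced by $D_1^{\delta}$, and $\log H(\p)\le 2D_1^\delta$.

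We take $N=\lfloor c\,D_1^{1/(t-r_1+1)}\rfloor$, where $c$ is a small power of $\lambda_1$ as in \eqref{eq:def_N}. We then choose $k$ by \eqref{eq:choice_k}, using the polynomial $Q=Q_{N,k}$ from Proposition~\ref{prop:polynôme_aux}. The lower endpoint of the $\delta$-range is dictated by this step. By Lemma~\ref{lem:Livouille_Elimination}, $\min\{X,-\tfrac12\log{\rm dist}(\omeg_0,\p)\}$ is at least of order $\lambda_1D_1^{\nu(r_1)-1+\delta}$. For $q^k\ge\gamma_2N^{t+1}$ we need this to dominate $N^{2t+2}\asymp D_1^{(2t+2)/(t-r_1+1)}$. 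Since $\nu(r_1)-1+\delta\ge\frac{2t+2}{t-r_1+1}$ exactly when $\delta\ge\frac{2t-r_1+2}{t-r_1+1}$, the smallest allowed $\delta$ is precisely what makes Proposition~\ref{prop:polynôme_aux} applicable. The estimates \eqref{eq:claim} and \eqref{eq:comparaison_X} then go through verbatim: $\sigma$ stays bounded, and $D_1N^{-t}$ is small. For $r_1=1$, case (i) of Proposition~\ref{prop:Bezout} gives the contradiction directly.

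For $r_1\ge2$, case (ii) of Proposition~\ref{prop:Bezout} produces $\J$ of height $m+1-(r_1-1)$. It satisfies $\deg(\J)\le\gamma_3ND_1=:D$ and $\log H(\J)\le\eta N(D_1^{\delta}+q^kD_1)$. By the choice of $k$, $q^kD_1\ll\lambda_1^{1/2}D_1^{\delta}$, so $\log H(\J)\ll\lambda_1^{1/2}ND_1^\delta$. The main obstacle is to apply the induction hypothesis to $\J$ with $r=r_1-1$: we must find $\delta'$ in the admissible interval $[\frac{2t-r+2}{t-r+1},\frac{3t-r+3}{t-r+1}]$ with $\log H(\J)\le D^{\delta'}$. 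Writing $D_1\approx D^{(t-r_1+1)/(t-r_1+2)}$ up to powers of $\lambda_1$, we get $ND_1^\delta\approx D^{(1+\delta(t-r_1+1))/(t-r_1+2)}$. One checks that $\delta\mapsto\frac{1+\delta(t-r+1-1)}{t-r+1}$ maps $[\frac{2t-r_1+2}{t-r_1+1},\frac{3t-r_1+3}{t-r_1+1}]$ onto $[\frac{2t-r+2}{t-r+1},\frac{3t-r+3}{t-r+1}]$; for instance $\frac{1+2t-r_1+2}{t-r_1+2}=\frac{2t-r+2}{t-r+1}$. So we set $\delta'$ equal to this image, slightly enlarged to absorb the constant $\lambda_1^{1/2}$ factors. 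If the enlarged $\delta'$ exceeds the upper endpoint, we fall back on Lemma~\ref{lem:Ideal_Mahler2}, which covers large heights. This bookkeeping of the $\lambda_1$-powers is where the most care is needed.

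Finally, the induction hypothesis gives
\[
-\log|\J(\omeg_0)|\le\lambda\bigl(D^{\nu(r)}\log H(\J)+D^{\nu(r)-1+\delta'}\deg(\J)\bigr).
\]
Substituting $D=\gamma_3ND_1$, the bounds on $\log H(\J)$ and $\deg(\J)\le\gamma_3N\deg(\p)$, and $D^{\delta'}\approx ND_1^{\delta}$ yields
\[
-\log|\J(\omeg_0)|\ll\lambda_1^{1/2}N^{\nu(r)+1}D_1^{\nu(r)}\bigl(\log H(\p)+D_1^{\delta-1}\deg(\p)\bigr).
\]
As in the previous proof, $N^{\nu(r)+1}D_1^{\nu(r)}\le\lambda_1^{(\nu(r)+1)/(2t+2)}D_1^{\nu(r_1)}$, so this is $\ll\lambda_1^{-\varepsilon}X$. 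This contradicts $-\log|\J(\omeg_0)|\ge X/(4\sigma)$ for $\lambda_1$ large.
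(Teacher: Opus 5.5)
There is a genuine gap where you check that the polynomials $Q_{N,k}$ can be used with $N\asymp D_1^{1/(t-r_1+1)}$. Your key equivalence is off by one. In fact $\nu(r_1)-1+\delta\ge\frac{2t+2}{t-r_1+1}$ holds if and only if $\delta\ge\frac{3t-2r_1+3}{t-r_1+1}$, which is exactly one unit above the lower endpoint $\frac{2t-r_1+2}{t-r_1+1}$. Your equivalence would be correct with $\nu(r_1)+\delta$ in place of $\nu(r_1)-1+\delta$.

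The only lower bound you have for $\min\{X,-\frac12\log{\rm dist}(\omeg_0,\p)\}$ is of order $\lambda_1D_1^{\nu(r_1)-1+\delta}$. Lemma~\ref{lem:Livouille_Elimination} divides $X$ by $\deg(\p)$, and $X$ itself is this small when $\deg(\p)$ and $\log H(\p)$ are small. So for $\delta\in\bigl[\frac{2t-r_1+2}{t-r_1+1},\frac{3t-2r_1+3}{t-r_1+1}\bigr)$, the quantity $N^{2t+2}\asymp c^{2t+2}D_1^{(2t+2)/(t-r_1+1)}$ can exceed this minimum by a factor of order $D_1$. No power of $\lambda_1$ in $c$ can compensate, because $D_1\to\infty$ with $\lambda_1$ fixed. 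Then no $k$ satisfies both $q^k\ge\gamma_2N^{t+1}$ and \eqref{eq:choice_k}, and Proposition~\ref{prop:Bezout} cannot be invoked. For instance, with $t=r_1=2$ you need $\delta\ge5$, while the admissible range starts at $4$.

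This bad range cannot be sidestepped. Your map $\delta\mapsto\delta'$ sends the lower endpoint to the lower endpoint. Moreover, the case $r=t$, $\delta=t+2$ is exactly the one used in the proof of Theorem~\ref{thm:measure_system} to reach $e^{-C_1d^{2t+2}}$, and it lies in the bad interval $[t+2,t+3)$.

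The missing idea is why the paper, following Nishioka's Prop.~4.4.10, uses the second auxiliary proposition (real parameter $s$, exponent $\ge t+2$) instead of $Q_{N,k}$ with $N$ tied to $D_1$: the size of $Q$ must be governed by the minimum, not by $D_1$. Set $\delta_Q:=(t-r_1+1)\delta-(t-r_1)$. This lies in $[t+2,2t+3]$ exactly when $\delta$ is admissible, and it is invariant under your map $\delta\mapsto\delta'$. Choose $s$ with $\gamma_{10}s^{t+\delta_Q}=\min\{X,-\frac12\log{\rm dist}(\omeg_0,\p)\}$.

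Then $q^k\asymp s^{\delta_Q-1}\ge s^{t+1}$ holds automatically, and this, not your computation, is the true origin of the lower endpoint. You also get $\sigma\le\gamma_{10}/\gamma_{11}$. From $\deg(\p)\le D_1$, $\log H(\p)\le 2D_1^{\delta}$ and $X\ll\lambda_1D_1^{\nu(r_1)+\delta}$, one checks that $s\ll\lambda_1^{1/(t+\delta_Q)}D_1^{1/(t-r_1+1)}$; the $D_1$-exponents match exactly. One also checks that $\deg(\p)\cdot\min\{X,-\frac12\log{\rm dist}(\omeg_0,\p)\}\ll X s^{t}$. These two facts make the B\'ezout error $s\log H(\p)+s^{\delta_Q}\deg(\p)$ negligible against $X/\sigma$. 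They also give $\log H(\J)\ll D^{\delta'}$ up to $\lambda_1$-powers, so the induction to $r_1-1$ then runs as you describe.

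Finally, your ``slight enlargement'' of $\delta'$ must be of size $O(\log\lambda_1/\log D)$, not a fixed $\epsilon$. Otherwise the extra factor $D^{\epsilon}$ destroys the final comparison with $\lambda_1^{-\varepsilon}X$.
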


\begin{proof}
The proof is analogous to that of \cite[Prop.~4.4.10]{Ni_Liv}, with the parameters $t,\lambda,D,\log H$ replacing, respectively, 
\[m, \lambda^r, \lambda^{m-r}D^{m-r+1}, \lambda^{m-r+\delta+1}D^{m-r}\log H\,,\] and with Proposition~\ref{prop:polynôme_aux2} below replacing \cite[Prop.~4.4.8]{Ni_Liv}. Since the argument is very close to that of Lemma~\ref{lem:Ideal_Mahler2}, we omit the details.
\end{proof}

\begin{prop}\label{prop:polynôme_aux2}
There exist positive real numbers $\gamma_7,\ldots,\gamma_{11}$, with $\gamma_{10}\ge 1$, such that for every $\delta \ge t+2$ and every $s \ge \gamma_7$, there exists a homogeneous polynomial
$
Q \in \Z[\X]$
satisfying \eqref{eq:cond_appl_Bezout1} and
\[
\deg(Q) \le \gamma_8 s\,,\quad
\log H(Q)\le \gamma_9 s^\delta\,,
\quad
-\gamma_{10}s^{t+\delta} \le \log |Q(\omeg_0)| \le -\gamma_{11}s^{t+\delta}\,.
\]
\end{prop}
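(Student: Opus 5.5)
We will follow the proof of Proposition~\ref{prop:polynôme_aux}, which is the analogue of \cite[Prop.~4.4.8]{Ni_Liv}. The new point is the choice of parameters. We take the Padé approximant of degree $s$ in $\X$, iterate it $k$ times with $q^k\approx s^{\delta-1}$, and obtain $\log H\approx s^\delta$ and $\log|Q(\omeg_0)|\approx -s^{t+1}q^k\approx -s^{t+\delta}$.

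\textbf{Step 1 (auxiliary function).} As before, we assume that $f_1,\ldots,f_t$ are algebraically independent over $\Q(z)$. We let $N:=s$ and use Siegel's lemma together with \eqref{eq:estimation_coeff} to build $R_0\in\OK[z,\X]$, homogeneous in $\X$, with the properties \eqref{eq:R0}:
\[
\deg_z,\ \deg_\X\le s,\qquad \log\overline H(R_0)=\mathcal O(s\log s),\qquad \val(R)\ge\kappa_1 s^{t+1}.
\]
Lemma~\ref{lem:Multiplicity} then gives the matching upper bound $\val(R)=\mathcal O(s^{t+1})$. Consequently, \eqref{eq:auxiliary_func} holds whenever $q^k\ge\kappa_5 s^{t+1}$.

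\textbf{Step 2 (choice of $k$ and the iterated polynomials).} We let $k$ be the least integer with $q^k\ge s^{\delta-1}$, so that
\[
s^{\delta-1}\le q^k< q\,s^{\delta-1}.
\]
Since $\delta\ge t+2$, we have $s^{\delta-1}\ge s^{t+1}$. Hence $q^k\ge\kappa_5 s^{t+1}$ once $s\ge\gamma_7$, after absorbing $\kappa_5$ by enlarging $s$ (or by replacing $s^{\delta-1}$ with $\kappa_5 s^{\delta-1}$, which only changes constants). This is exactly where the hypothesis $\delta\ge t+2$ enters. The recursion \eqref{eq:R_krec} and the passage to $P_k\in\Z[Y,\X]$ then give
\[
\deg_\X P_k\le s,\qquad \log H(P_k)=\mathcal O(q^k s)=\mathcal O(s^\delta),
\]
and
\[
-\kappa_7 q^k s^{t+1}\le\log|P_k(\xi,\omeg_0)|\le-\kappa_8q^ks^{t+1}.
\]
By the choice of $k$, both sides of the last estimate are $\asymp s^{t+\delta}$, with constants independent of $\delta$.

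\textbf{Step 3 (removing $\xi$).} We apply \cite[Lem.~4.1.9]{Ni_Liv}: taking the norm, that is, the product over the conjugates of $\xi$, yields a homogeneous polynomial $Q\in\Z[\X]$. Its degree is at most $[\K:\Q]s$ and its log-height is $\mathcal O(s^\delta)$. Its value at $\omeg_0$ is sandwiched between $-\gamma_{10}s^{t+\delta}$ and $-\gamma_{11}s^{t+\delta}$, because the remaining conjugate factors have absolute value between $e^{-\mathcal O(s^\delta)}$ and $e^{\mathcal O(s^\delta)}$, and $s^\delta=o(s^{t+\delta})$.

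Condition \eqref{eq:cond_appl_Bezout1}, on its left-hand side, follows from
\[
\log H(Q)+(2m+2)\log(\deg Q+1)\le \gamma_9 s^\delta+\mathcal O(\log s)\le\gamma_{11}s^{t+\delta}
\]
for $s$ large, using $t\ge1$. We can also take $\gamma_{10}\ge1$ by enlarging it if necessary.

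\textbf{Main obstacle.} The delicate point is uniformity in $\delta$: all constants must be independent of $\delta\ge t+2$. The worry is whether $\gamma_7$ can be taken independent of $\delta$. We expect it can: every error term is either $\mathcal O(s\log s)$ or $\mathcal O(q^ks)$, and the ratio $q^k/s^{\delta-1}\in[1,q)$ is uniform. We will nonetheless check carefully that the lower bound in \eqref{eq:auxiliary_func}, which relies on the multiplicity lemma and on the tail estimate $|R(\alpha^{q^k})-c\,\alpha^{q^k\val R}|$, needs only $q^k\ge\kappa_5s^{t+1}$ and no further dependence on $\delta$.
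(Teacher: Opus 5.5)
Your parameter choice is the paper's: it takes $N=\lceil s\rceil$ and $k$ minimal with $q^k\ge 2\gamma_2 s^{\delta-1}$, then applies the already-proved construction of the $Q_{N,k}$ as a black box, whose constants $\gamma_1,\dots,\gamma_6$ do not depend on $N,k$, so uniformity in $\delta$ is automatic and Steps~1--3 need not be redone. If you do unroll them, note two points: at $\delta=t+2$ only your second fix (putting the constant into the definition of $k$) works, and the lower bound for $\log|Q(\omeg_0)|$ cannot be justified by claiming the other conjugate factors $P_k(\sigma\xi,\omeg_0)$ are at least $e^{-\mathcal O(s^\delta)}$, since nothing controls them from below a priori, so that bound must come from \cite[Lem.~4.1.9]{Ni_Liv} itself (equivalently, from \eqref{eq:encadr_Qk}).
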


\begin{proof}
Fix $\delta\ge t+2$ and $s\ge 1$. Let $k$ be the least integer such that
\[
q^k\ge 2\gamma_2 s^{\delta-1}\,,
\]
and set
\[
N:=\lceil s\rceil\,.
\]
If $\gamma_7$ is sufficiently large and $s \ge \gamma_7$, then $N$ and $k$ satisfy the assumptions of Proposition~\ref{prop:polynôme_aux}. The conclusion therefore follows from Proposition~\ref{prop:polynôme_aux} by taking
\[
Q=Q_{N,k}\,,
\]
and observing that
\[
Nq^k=\mathcal O(s^\delta)\,.
\]
There is no loss of generality in assuming that $\gamma_{10}\ge 1$.
\end{proof}
We are now ready to conclude the proof of Theorem~\ref{thm:measure_system}.

\begin{proof}[Proof of Theorem~\ref{thm:measure_system}]
We continue with the notation introduced in the proof of Theorem~\ref{thm:measure_1st_regime}. Let $\I$ be the ideal constructed there. Recall that, by \eqref{eq:mino_intersec_ideals}, we have
\[
\deg(\I)\le \lambda_1\deg(P)=:D\,,
\qquad
\log H(\I)\le \lambda_1\deg(P)+\lambda_2\log H(P)\,,
\]
and
\begin{equation}\label{eq:mino_intersec_ideals2}
\log |\I(\omeg_0)|
\le
\tfrac12 \log |\prescript{\rm h}{}{P}(\omeg_0)|
+\lambda_1\deg(P)+\lambda_2\log H(P)\,.
\end{equation}
After enlarging $\lambda_1$ if necessary, we assume that $\lambda_1\ge D_0$, where $D_0$ is the constant given by Lemma~\ref{prop:majo_ideal_bis}. Since $D=\lambda_1\deg(P)$, 
it follows that $D\ge D_0$. Recall that $\hgt(\I)=m+1-t$, and hence
\[
r:=m+1-\hgt(\I)=t\,.
\]

We now assume that
\[
\lambda_1\deg(P)+\lambda_2\log H(P)\leq (\lambda_1\deg(P))^{2t+3}\,,
\]
since the complementary case was already treated in the proof of
Theorem~\ref{thm:measure_1st_regime}. Set $d:=\deg(P)$ and
\[
\delta:=
\max\left\{t+2,\; \dfrac{\log(\lambda_1 d+\lambda_2\log H(P))}{\log(\lambda_1 d)}\right\}
\]
Then
\[
t+2\leq \delta \leq 2t+3\,,
\]
and therefore $\I$ satisfies the assumptions of Lemma~\ref{prop:majo_ideal_bis} with $r=t$.

Applying \eqref{eq:mino_Iomega_bis} together with \eqref{eq:mino_intersec_ideals2}, and using the fact that $\delta\geq 1$, we obtain
\begin{align*}
\log |\I(\omeg_0)|
&\geq -\lambda \left(D^t\log H(\I)+D^{t-1+\delta}\deg(\I)\right)\\
&\geq -\lambda_3\bigl(d^t\log H(P)+d^{t+\delta}\bigr)\,.
\end{align*}

We now distinguish two cases.

If $\delta = t+2$, then
\[
\log |\I(\omeg_0)| \geq -\lambda_4\bigl(d^t\log H(P)+d^{2t+2}\bigr)\,.
\]
This proves the theorem in that case.

Otherwise,
\[
\delta=
\dfrac{\log(\lambda_1 d+\lambda_2\log H(P))}{\log(\lambda_1 d)}\,,
\]
so that
\[
d^\delta=\mathcal O\bigl(d+\log H(P)\bigr)\,.
\]
Therefore,
\[
\log |\I(\omeg_0)| \geq -\lambda_5\bigl(d^t\log H(P)+d^{t+1}\bigr)\,.
\]
This completes the proof of Theorem~\ref{thm:measure_system}.
\end{proof}

\section{Proof of Theorem~\ref{thm:measure_sans_system}}\label{sec:remove_sing}

To deduce Theorem~\ref{thm:measure_sans_system} from Theorem~\ref{thm:measure_system}, we need to identify a suitable Mahler system involving the functions $f_1,\ldots,f_r$, together with possibly some additional functions, such that $\alpha$ is regular for this system. The following result provides the necessary background for applying Theorem~\ref{thm:measure_system}.


\begin{prop}\label{prop:good_eq}
Let $f$ be an $M_q$-function, and let $\alpha \in \Q^\times$ with $|\alpha|<1$.
Assume that $f$ is well-defined at $\alpha$, and let $\ell$ be a positive integer such that
\[
|\alpha^{q^\ell}|<\rho\,,
\]
where $\rho>0$ denotes the radius of convergence of $f$. Then $f$ satisfies a $q^\ell$-Mahler equation for which $\alpha$ is a regular point.
\end{prop}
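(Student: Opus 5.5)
We will build the required equation in two steps. First we pass from $q$ to $q^\ell$. Then we construct a good operator by choosing the coefficient polynomials through linear algebra, using the fact that $f$ is analytic near the orbit of $\alpha$. The paper attributes this existence result to \cite{AF24_EM}; the plan below is a self-contained route.

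\textbf{Step 1: the $q^\ell$-Mahler module of $f$.} Put $p:=q^\ell$. From \eqref{eq:def:M-func}, the $\Q(z)$-vector space spanned by $f(z),f(z^q),f(z^{q^2}),\ldots$ has finite dimension $m$. In particular $f(z),f(z^p),\ldots,f(z^{p^{m}})$ are linearly dependent over $\Q(z)$, so $f$ is an $M_p$-function.

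Let $\mathcal L$ be the left ideal of operators $\sum_i b_i(z)\sigma^i$ in the skew ring $\Q(z)\langle\sigma\rangle$, with $\sigma: z\mapsto z^p$, that annihilate $f$. Every element of $\mathcal L$ is a left multiple of the minimal operator $L_0=\sum_{i=0}^n b_i\sigma^i$. For $k\ge 0$, the operator $\sigma^{k}L_0$ is $\sum_i b_i(z^{p^k})\sigma^{i+k}$.

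\textbf{Step 2: where singularities can sit.} For a Mahler equation $\sum_{i=0}^{n} c_i(z)f(z^{p^i})=0$, the companion matrix is defined and invertible at $\beta$ exactly when $c_0(\beta)c_n(\beta)\ne0$. So $\alpha$ is regular if and only if $c_0c_n$ does not vanish at any $\alpha^{p^k}$, $k\ge0$.

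Since $|\alpha^{p^k}|\to0$, and $0$ can be made a non-root of $c_0c_n$ after dividing by a suitable power of $z$ (for a cleared form, for instance via \cite{ABS} or by a shift $z\mapsto$ factor), only finitely many points $\beta_j=\alpha^{p^j}$, $0\le j\le J$, can be troublesome. All of them satisfy $|\beta_j|\le|\alpha|<1$. The hypothesis $|\alpha^p|<\rho$ ensures that $\beta_1,\beta_2,\ldots$ lie inside the disc of convergence of $f$.

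\textbf{Step 3: building an operator with $c_0c_n$ nonvanishing on the orbit.} We look for $L=\sum_{i=0}^{n'} c_i\sigma^i\in\mathcal L$ of the form $L=\sum_{k} u_k(z)\,\sigma^k L_0$, with polynomial multipliers $u_k$ chosen so that:
\begin{itemize}
\item the leading coefficient $u_K(z)\,b_n(z^{p^K})$ does not vanish at the $\beta_j$;
\item the trailing coefficient $c_0$ does not vanish at the $\beta_j$.
\end{itemize}

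For the leading coefficient, note that $b_n(z^{p^K})$ vanishes at $\beta_j$ only if $b_n(\beta_{j+K})=0$. Since $\beta_{j}\to 0$ and $b_n(0)\ne 0$ after normalization, a sufficiently large $K$ handles all $j$.

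The trailing coefficient is the real difficulty. If $c_0(\beta)=0$, evaluating the equation at $\beta$ gives a linear relation among $f(\beta^{p}),\ldots$ only. Here the convergence hypothesis is crucial: it makes $f(z^{p}),f(z^{p^2}),\ldots$ analytic at every $\beta_j$ for $j\ge 0$, so these values are meaningful.

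We then argue as follows. Divide the equation by the highest power of $(z-\beta)$ dividing all coefficients $c_1,\ldots,c_{n'}$. The relation stays valid as an identity of analytic functions near $\beta$. If $c_0(\beta)$ is still zero while some $c_i(\beta)\neq0$, we use linear independence considerations in the finite-dimensional module to eliminate the dependence and replace $L$ by a combination whose $c_0$ is nonzero at $\beta$. Concretely, we choose the multipliers $u_k$ by a linear-algebra (interpolation) argument in the finitely many points $\beta_j$ and finitely many derivatives.

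\textbf{Main obstacle.} I expect Step 3 to be the hardest part. One must show that the space of annihilating operators is rich enough that the conditions on the $u_k$, namely a finite number of vanishing or nonvanishing conditions at the $\beta_j$, can be met simultaneously. My first attempt will be a dimension count, using that the $\sigma^kL_0$ span $\mathcal L$ over $\Q(z)$ as $k$ varies.
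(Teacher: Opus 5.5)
Your Steps~1 and~2 are sound. Passing to $p=q^\ell$ is exactly what makes $f$ defined at every point $\alpha^{p^k}$: at $\alpha$ by hypothesis, and at the others because they lie in the disc of convergence. The leading coefficient can indeed be repaired with $u_K\sigma^KL_0$ for $K$ large. (You do not need $b_n(0)\neq 0$, which can fail even after normalization. It suffices that the nonzero points $\alpha^{p^j}\to 0$ meet the finitely many roots of $b_n$ only finitely often.)

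The gap is Step~3, which is the whole content of the proposition, and as set up it cannot work. For $k\ge 1$ the operator $\sigma^kL_0$ starts at $\sigma^k$, so the $\sigma^0$-coefficient of $\sum_k u_k\sigma^kL_0$ is exactly $u_0b_0$. With polynomial multipliers it therefore vanishes at every zero of $b_0$, whatever interpolation conditions you impose on the $u_k$. To remove a zero of $b_0$ at $\beta=\alpha^{p^j}$, $u_0$ must have a pole at $\beta$, and the poles this creates in all the other coefficients of $L$ must cancel.

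That cancellation is not a linear-algebra fact. Analyticity of $f$ at $\beta$ only gives $\sum_{i\ge 1}c_i(\beta)f(\beta^{p^i})=0$. This is a $\Q$-linear relation among the values at $\gamma=\beta^p$ of $f(z),\dots,f(z^{p^{N-1}})$. Your appeal to ``linear independence considerations in the module'' conflates $\Q(z)$-linear independence of these functions with $\Q$-linear independence of their values. What you need is that this numerical relation is the specialization at $\gamma$ of a functional relation $\sum_i g_i(z)f(z^{p^{i-1}})=0$ with $g_i\in\Q[z]$ and $g_i(\gamma)=c_i(\beta)$. Subtracting the operator $\sum_{i\ge1}g_i(z^p)\sigma^i\in\mathcal L$ then makes every coefficient vanish at $\beta$, so the order of vanishing of the trailing coefficient can be lowered (with some bookkeeping for the leading one).

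This is the lifting theorem \cite{AF17,Ph15}. It requires $\gamma$ to be regular, so it is naturally applied by descending induction along the orbit, starting from the points near $0$. The paper does not reprove any of this: it takes the good operator from (a minor modification of) the proof of \cite[Prop.~2.5]{AF24_EM}, which rests on that theorem.

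No dimension count can replace this input, because the proposition has genuine arithmetic content. Take $c\in\Q$, $h=\sum_k z^{2^k}$ and $F_c(z)=(h(z^2)-c)/(1-2z)$. Its minimal $4$-Mahler operator has order $2$ and a trailing coefficient with a simple zero at $1/2$. One checks the following: if $h(1/4)=c$ (so that $F_c$ is analytic on the unit disc), then any $4$-Mahler operator annihilating $F_c$ with trailing coefficient nonzero at $1/2$ would yield a nonzero $\Q(z)$-linear relation between $F_c(z)$ and $F_c(z^4)$, contradicting minimality. Hence the proposition with $q=2$, $\alpha=1/2$, $\ell=1$ forces $h(1/4)\neq c$ for every $c\in\Q$. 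In other words, it already contains the transcendence of $\sum_k 4^{-2^k}$.
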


\begin{proof}
This follows from the proof of \cite[Prop.~2.5]{AF24_EM}. That proposition shows that an $M_q$-function which is analytic in a disk centered at the origin of radius $R<1$ satisfies a $q$-Mahler equation with no singularities in this disk. A minor modification of the proof yields the following statement: if $f$ is 
well-defined at $\alpha^{q^k}$ for every integer $k\geq 0$, then $f$ satisfies a $q$-Mahler equation for which $\alpha$ is regular.

Now $f$ is also an $M_{q^\ell}$-function, and our assumption on $\ell$ implies that
\[
|\alpha^{(q^\ell)^k}|<\rho \qquad \text{for all } k\geq 1\,.
\]
Since $f$ is well-defined at $\alpha$ by assumption, it follows that $f$ is well-defined at $\alpha^{(q^\ell)^k}$ for every $k\geq 0$. Applying the previous statement with $q$ replaced by $q^\ell$, we obtain a $q^\ell$-Mahler equation satisfied by $f$ for which $\alpha$ is a regular point.
\end{proof}

\begin{rem}
The proof of Proposition~\ref{prop:good_eq} ultimately relies on the lifting theorem for $M_q$-functions \cite[Thm.~1.4]{AF17}; see also \cite[Thm.~1.3]{Ph15}. Thus, unlike Theorem~\ref{thm:measure_system}, Theorem~\ref{thm:measure_sans_system} depends on relatively recent developments in Mahler's method.
\end{rem}

We are now ready to prove Theorem~\ref{thm:measure_sans_system}.

\begin{proof}[Proof of Theorem~\ref{thm:measure_sans_system}]
Let $\ell$ be a sufficiently large integer so that $|\alpha^{q^\ell}|$ is smaller than the radius of convergence of each of the functions $f_i$, $1\le i\le r$. By Proposition~\ref{prop:good_eq}, each $f_i$ satisfies a $q^\ell$-Mahler equation
\begin{equation}\label{eq:goodM}
a_{i,0}(z)f_i(z)+a_{i,1}(z)f_i(z^{q^\ell})+\cdots+a_{i,m_i}(z)f_i(z^{q^{\ell m_i}})=0\,,
\end{equation}
for which $\alpha$ is a regular point.

It follows that the column vector with coordinates
\[
f_i(z),\ f_i(z^{q^\ell}),\ \ldots,\ f_i(z^{q^{\ell(m_i-1)}})
\]
is a solution of the $q^\ell$-Mahler system
\[
Y(z^{q^\ell})=A_i(z)Y(z)\,,
\]
where
\begin{equation}\label{eq:Ai}
A_i(z)=
\begin{pmatrix}
0 & 1 & 0 & \cdots & 0 \\
\vdots & \ddots & \ddots & & \vdots \\
\vdots & & \ddots & \ddots & 0 \\
0 & \cdots & \cdots & 0 & 1 \\
-\dfrac{a_{i,0}(z)}{a_{i,m_i}(z)} &
\cdots &
\cdots &
\cdots &
-\dfrac{a_{i,m_i-1}(z)}{a_{i,m_i}(z)}
\end{pmatrix}\,.
\end{equation}
Moreover, $\alpha$ is regular for this system.

Therefore, the functions
\[
f_i(z^{q^{\ell k}})\,,
\qquad
1\le i\le r,\quad 0\le k\le m_i-1\,,
\]
form the coordinates of a solution vector of the $q^\ell$-Mahler system
\[
Y(z^{q^\ell})=A(z)Y(z)\,,
\]
where
\[
A(z)=A_1(z)\oplus\cdots\oplus A_r(z)=
\begin{pmatrix}
A_1(z) & & \\
& \ddots & \\
& & A_r(z)
\end{pmatrix}\,.
\]
The point $\alpha$ remains regular for this system. Furthermore, the transcendence degree over $\Q(z)$ of the field generated by all these functions is at most $\tau$. The conclusion now follows directly from Theorem~\ref{thm:measure_system}.
\end{proof}


\section{Mahler's classification}\label{sec: mahler}

Given a complex number $\xi$ and a positive integer $d$, let $w_d(\xi)$ denote the supremum of the real numbers $w$ for which the inequality
\[
0<|P(\xi)|<H(P)^{-w}
\]
holds for infinitely many polynomials $P\in \Z[X]$ of degree at most $d$.
Then $\xi$ is a Liouville number if and only if $w_1(\xi)=+\infty$.

We further define
\[
w(\xi):=\limsup_{d\to\infty}\frac{w_d(\xi)}{d}\,\cdot
\]
The complex numbers are divided into the following four classes:
\begin{itemize}
\item $\xi$ is an $A$-number if $w(\xi)=0$.
\item $\xi$ is an $S$-number if $0<w(\xi)<\infty$.
\item $\xi$ is a $T$-number if $w(\xi)=\infty$ and $w_d(\xi)<\infty$ for all $d\ge 1$.
\item $\xi$ is a $U$-number if $w_d(\xi)=\infty$ for some $d\ge 1$.
\end{itemize}
The $A$-numbers are precisely the algebraic numbers, while the $U$-numbers generalize Liouville numbers. Almost all complex numbers, in the sense of Lebesgue measure, are $S$-numbers. Distinguishing between $S$- and $T$-numbers is notoriously difficult in transcendental number theory.

In view of this classification, a natural heuristic is that once the transcendence of a complex number $\xi$ has been established, one expects $\xi$ to be an $S$-number, and in particular neither a Liouville number nor a $U$-number, unless there is specific evidence to the contrary. This is precisely the situation for elements of the set $\mathbf M$.

\begin{conj}\label{conj}
All transcendental elements of $\mathbf M$ are $S$-numbers.
\end{conj}

Conjecture~\ref{conj} seems to belong to the folklore of the subject, although it is difficult to trace a precise source. The only known case concerns values of $M$-functions satisfying a homogeneous or inhomogeneous Mahler equation of order~$1$, which was proved by Galochkin \cite{Gal80} in 1980.  

Subsequent results point in the same direction, but fall short of establishing Conjecture~\ref{conj} in full generality. They show that the number under consideration must be either an $S$-number or a $T$-number.  
In his correspondence with Shallit, Becker conjectured that transcendental automatic real numbers should be $S$-numbers. Using the Schmidt Subspace Theorem, Adamczewski and Bugeaud \cite{AB11} proved in 2011 a transcendence measure for irrational automatic real numbers, from which it follows that these numbers are either $S$-numbers or $T$-numbers. 
In 2015, Bell, Bugeaud, and Coons \cite{BBC} proved that values at points of the form $1/b$ of generating functions of regular sequences are also either $S$-numbers or $T$-numbers. Since generating functions of regular sequences form a broader class than those associated with automatic sequences, this gives a generalization of the previous result.

In the direction of Conjecture~\ref{conj}, we easily deduce from Theorem~\ref{thm:measure_sans_system} the following result, which generalizes Theorem~\ref{thm:Lnumbers}.

\begin{coro}
	\label{coro:U_numbers}
	No $\xi \in \bf M$ is a $U$-number.
\end{coro}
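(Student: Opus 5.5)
Let $\xi=f(\alpha)\in\mathbf M$, where $f$ is an $M_q$-function well-defined at the nonzero algebraic point $\alpha$ with $|\alpha|<1$. The plan is to apply Theorem~\ref{thm:measure_sans_system} with $r=1$, $f_1=f$, and then read off a uniform bound on $w_d(\xi)$. If $\xi$ is algebraic it is an $A$-number, so it is not a $U$-number. We may therefore assume $\xi$ is transcendental. In that case $P(\xi)\neq 0$ for every nonzero $P\in\Z[X]$, so the first branch of the alternative in Theorem~\ref{thm:measure_sans_system} never occurs. With $\tau$ and $C_1$ given by that theorem, depending only on $f$ and $\alpha$, we get for every nonzero $P\in\Z[X]$ of degree at most $d$:
\[
|P(\xi)|\ge H(P)^{-C_1\deg(P)^\tau}e^{-C_1\deg(P)^{2\tau+2}}\ge H(P)^{-C_1 d^\tau}e^{-C_1 d^{2\tau+2}}.
\]
The second inequality uses $H(P)\ge 1$ and $\deg(P)\le d$.

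Next we bound $w_d(\xi)$. Fix $d\ge 1$ and suppose $w>C_1d^\tau$ is such that $0<|P(\xi)|<H(P)^{-w}$ for infinitely many $P\in\Z[X]$ with $\deg P\le d$. Only finitely many integer polynomials of degree at most $d$ have bounded height, so along this family $H(P)\to\infty$. Combining with the lower bound gives
\[
(w-C_1d^\tau)\log H(P)< C_1d^{2\tau+2},
\]
whose right-hand side is fixed while $\log H(P)\to\infty$. This is a contradiction. Hence
\[
w_d(\xi)\le C_1d^\tau<\infty\quad\text{for every } d\ge 1,
\]
so $\xi$ is not a $U$-number. Since Liouville numbers are exactly the real numbers with $w_1=\infty$, this also recovers Theorem~\ref{thm:Lnumbers}.

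There is no real obstacle here; the substance is entirely in Theorem~\ref{thm:measure_sans_system}. Two points need care. First, the constants $C_1$ and $\tau$ must be independent of $P$, which Theorem~\ref{thm:measure_sans_system} guarantees. Second, the nonvanishing of $P(\xi)$ must be justified by the transcendence of $\xi$ rather than assumed, and the algebraic case must be dispatched separately, as done above.
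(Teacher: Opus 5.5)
Your proof is correct and follows essentially the same route as the paper. Both arguments dispose of the algebraic case, apply Theorem~\ref{thm:measure_sans_system} with $r=1$, and deduce $w_d(\xi)\le C_1 d^\tau<\infty$ for every $d$. The only point the paper makes explicit and you take for granted is that a transcendental $\xi=f(\alpha)$ forces $0<|\alpha|<1$, which is needed to invoke the theorem: at $\alpha=0$ the value is the algebraic constant term, and a function well-defined at some $|\alpha|\ge 1$ must be rational.
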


\begin{proof}
If $\xi$ is algebraic, then $\xi$ is not a $U$-number, and there is nothing to prove. Assume that $\xi$ is transcendental. Let $f(z)$ be an $M_q$-function, and let $\alpha$ be a nonzero algebraic number such that $f(\alpha)$ is well-defined and
\[
f(\alpha)=\xi\,.
\]
Then $f$ is transcendental and therefore $|\alpha|<1$. Set
\[
\tau = {\rm tr.deg}_{\Q(z)}\bigl(f(z^{q^\ell}) : \ell \geq 0\bigr)\,.
\]
Since $f$ is transcendental, we have $\tau \geq 1$.

For every positive integer $d$, Theorem~\ref{thm:measure_sans_system} with $r=1$ implies that there exists a positive real number $C_1$ such that
\[
|P(f(\alpha))| > H(P)^{-C_1d^{\tau}}e^{-C_1d^{2\tau+2}}
\]
for every nonzero polynomial $P \in \Z[X]$ of degree at most $d$. It follows that for every $\varepsilon>0$ there are only finitely many polynomials $P\in\Z[X]$ of degree at most $d$ such that
\[
|P(f(\alpha))| < H(P)^{-C_1d^\tau-\varepsilon}\,.
\]
Hence
\[
w_d(f(\alpha)) \leq C_1 d^\tau < +\infty\,.
\]
Therefore $\xi=f(\alpha)$ is not a $U$-number.
\end{proof}

When $\tau=1$ in Theorem~\ref{thm:measure_sans_system}, we conclude that $f(\alpha)$ is an $S$-number. This slightly generalizes the result of Galochkin mentioned above.

In view of Corollary~\ref{coro:U_numbers}, the remaining step toward Conjecture~\ref{conj} is to prove that no element of $\mathbf M$ is a $T$-number.

\section{Concluding remarks}\label{sec: last}

We end the paper with a few remarks.

\subsection{An application: transcendence over $M$-values}

One of the most celebrated applications of Liouville's inequality is the first proof of transcendence in history: the proof that the number
\[
\xi := \sum_{n=0}^\infty 10^{-n!}
\]
is transcendental. The argument proceeds by contradiction. Suppose that $\xi$ is algebraic. Then Liouville's inequality implies that there exists an integer $N$ such that, for every rational number $p/q$, one has
\begin{equation}\label{eq:lower_bund_Liouville}
\left| \xi - \frac{p}{q} \right| \geq q^{-N}\,.
\end{equation}
To derive a contradiction, consider the rational number
\[
\frac{p}{q}=\sum_{n=0}^{N}10^{-n!}\,,
\qquad\text{so that}\qquad
q=10^{N!}\,.
\]
Then
\[
\left| \xi - \frac{p}{q} \right|
\leq \sum_{n=N+1}^\infty 10^{-n!}
\leq 2\cdot 10^{-(N+1)!}
\leq 2q^{-(N+1)}
< q^{-N}\,,
\]
contradicting \eqref{eq:lower_bund_Liouville}. This proves that $\xi$ is transcendental.

A similar strategy can be adapted to the ring
$\mathbf M_{q,\alpha}$ defined in \eqref{eq:mqalpha}, in place of $\Q$. 
Recall that $\mathbf M_{q,\alpha}$ contains $\Q$, but is not expected to be a field. When $\alpha=b^{-1}$ is the reciprocal of an integer $b\geq 2$, the ring $\mathbf M_{q,\alpha}$ contains all real numbers whose base-$b$ expansion is generated by a $q$-automatic sequence; see Chapter~13 of \cite{AS03}.  
We now deduce from Theorem~\ref{thm:measure_sans_system} the following result \emph{\`a la Liouville}.

\begin{coro}
Let $\beta \in {\rm Frac}\left(\mathbf M_{q,\alpha}\right)$ with $|\beta|< 1$, and let $(u_n)_{n\ge 0}$ be an increasing sequence of positive integers such that
\[
\limsup_{n\to\infty}\frac{u_{n+1}}{u_n^C}=+\infty
\qquad\text{for every } C>0\,.
\]
Then the number
\[
\xi:=
\sum_{n\ge 0}\beta^{u_n}
\]
is transcendental over $\mathbf M_{q,\alpha}$. In particular, $\xi$ is transcendental.
\end{coro}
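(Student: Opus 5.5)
We argue by contradiction, following Liouville's classical argument with Theorem~\ref{thm:measure_sans_system} playing the role of Liouville's inequality. Suppose $\xi$ is algebraic over $\mathbf M_{q,\alpha}$. Clearing denominators, there is a nonzero polynomial $R(Y)=\sum_{j=0}^{e}c_jY^j$ with $c_j\in \mathbf M_{q,\alpha}$ and $R(\xi)=0$. Write $\beta=b_1/b_2$ with $b_1,b_2\in\mathbf M_{q,\alpha}$ and $b_2\neq 0$. For each $N$, set $\xi_N:=\sum_{n\le N}\beta^{u_n}$. Then
\[
b_2^{e u_N}R(\xi_N)=P_N(c_0,\ldots,c_e,b_1,b_2),
\]
where $P_N\in\Z[X_0,\ldots,X_e,Y_1,Y_2]$ is a polynomial with integer coefficients. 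Its total degree is $O(u_N)$ and its height satisfies $\log H(P_N)=O(u_N)$; these multinomial counts are routine to check. We then apply Theorem~\ref{thm:measure_sans_system} to the finitely many $M_q$-functions whose values at $\alpha$ are $c_0,\ldots,c_e,b_1,b_2$. This gives constants $C_1$ and $\tau$, independent of $N$, such that either $P_N(\cdots)=0$ or
\[
|P_N(\cdots)|\ge \exp\bigl(-C_1u_N^{\tau+1}-C_1u_N^{2\tau+2}\bigr)\ge \exp\bigl(-C u_N^{2\tau+2}\bigr).
\]

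For the upper bound, note that $R(\xi)=0$ gives $|R(\xi_N)|=|R(\xi_N)-R(\xi)|\le K|\xi-\xi_N|$, where $K$ is uniform in $N$ because every $\xi_N$ lies in the disk $|y|\le \sum|\beta|^{u_n}<\infty$. Since $|\beta|<1$ and $u_n$ is increasing,
\[
|\xi-\xi_N|\le 2|\beta|^{u_{N+1}}\qquad\text{for $N$ large}.
\]
Hence
\[
|P_N(\cdots)|\le |b_2|^{eu_N}K\cdot 2|\beta|^{u_{N+1}}\le \exp\bigl(O(u_N)-c\,u_{N+1}\bigr),\qquad c=-\log|\beta|>0.
\]
The lacunarity hypothesis with $C=2\tau+3$ gives infinitely many $N$ with $u_{N+1}\ge u_N^{2\tau+3}$. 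For such $N$ (large), the upper bound contradicts the lower bound $\exp(-Cu_N^{2\tau+2})$, provided $P_N(\cdots)\neq 0$.

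The main obstacle is therefore to exclude $R(\xi_N)=0$ for infinitely many of these $N$, since $b_2\neq0$ makes $P_N(\cdots)=0$ equivalent to $R(\xi_N)=0$. The plan is as follows. Suppose $R(\xi_N)=0$ for infinitely many $N$ in the good subsequence. Since $R$ has at most $e$ roots, some root $\rho$ would satisfy $\xi_N=\rho$ for two distinct indices $N<N'$. But
\[
\xi_{N'}-\xi_N=\sum_{N<n\le N'}\beta^{u_n},
\]
and this sum is nonzero for $N$ large: its first term $\beta^{u_{N+1}}$ dominates the remaining terms in absolute value, because the gaps $u_{n+1}-u_n\to\infty$ along the tail used. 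Where the gaps are not large, we first pass to a subsequence $N$ large so that $|\beta|^{u_{N+2}-u_{N+1}}$ is small, using the lacunarity; alternatively, one can simply choose good indices at which consecutive differences are huge. Hence for all but finitely many good $N$ we have $R(\xi_N)\neq 0$, and the contradiction above applies. So $\xi$ is transcendental over $\mathbf M_{q,\alpha}$, and in particular transcendental, since $\Q\subset\mathbf M_{q,\alpha}$.
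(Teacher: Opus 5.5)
Your architecture is the paper's: you build $P_N$ from an annihilating polynomial of $\xi$ and the partial sums $\xi_N$, bound $|P_N(\cdots)|$ below by Theorem~\ref{thm:measure_sans_system} and above by the tail, and compare at indices with $u_{N+1}\ge u_N^{2\tau+3}$. Those parts are fine. You even handle the factor $|b_2|^{eu_N}$ more carefully than the paper does; the constant $2$ in $|\xi-\xi_N|\le 2|\beta|^{u_{N+1}}$ should be $(1-|\beta|)^{-1}$, which is harmless.

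The genuine gap is the non-vanishing of $R(\xi_N)$ at the good indices. Your domination argument for $\sum_{N<n\le N'}\beta^{u_n}\neq 0$ needs $|\beta|^{u_{N+2}-u_{N+1}}<1-|\beta|$, that is, a large gap \emph{after} $u_{N+1}$. The hypothesis only supplies a huge gap \emph{before} $u_{N+1}$ at good indices, and a $\limsup$ condition does not give $u_{n+1}-u_n\to\infty$. Neither of your proposed fixes produces indices that are simultaneously good and followed by a large gap.

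This step cannot be repaired, because the statement is false as written; the degenerate choice $\beta=0$ already gives $\xi=0$. For a nondegenerate counterexample, let $\beta\approx-0.6823$ be the real root of $Y^3+Y+1$, so $0<|\beta|<1$ and $\beta\in\Q\subset\mathbf M_{q,\alpha}$. Let $(u_n)$ list the blocks $a_k,a_k+1,a_k+3$ with $a_1=1$ and $a_{k+1}=2^{a_k+3}$. This sequence is increasing and $u_{n+1}/u_n^C\to\infty$ along the block ends, yet
\[
\xi=\sum_k\beta^{a_k}(1+\beta+\beta^3)=0 .
\]
Here $\xi_N=\xi$ at every block end, and for large $N$ these are the only good indices. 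So $R(Y)=Y$ vanishes at $\xi_N$ exactly where you need it not to.

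The paper's proof has the same hole: it asserts without justification that $P(f_1(\alpha),\ldots,f_{r-2}(\alpha),\xi_N)\neq0$ for all large $N$. What is true is that roots of $R$ other than $\xi$ are hit by $\xi_N$ only finitely often, since $\xi_N\to\xi$. So the argument goes through once $\xi_N\neq\xi$ for infinitely many good $N$. This holds in either of two cases:
\begin{itemize}
\item $\beta$ is transcendental. Then $\sum_{N<n\le N'}Y^{u_n}$ is a nonzero integer polynomial, so the $\xi_N$ are pairwise distinct and at most $e$ of them are roots of $R$.
\item $\beta\neq 0$ and $u_{n+1}-u_n\to\infty$. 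Then your domination argument works verbatim.
\end{itemize}
One of these extra hypotheses has to be added to the statement.
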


\begin{rem}  
The sequence $(u_n)_{n\ge 0}$ is required to grow faster than the sequence $n!$ in Liouville's classical construction. Any increasing sequence satisfying
\[
\limsup_{n\to\infty}\frac{\log\log u_n}{n}=+\infty
\]
meets this requirement. For instance, one may take $u_n=2^{n!}$.
\end{rem}

\begin{proof}
Assume, for contradiction, that $\xi$ is algebraic over ${\bf M}_{q,\alpha}$. 
Then there exist an integer $r\ge 3$, $M_q$-functions $f_1,\ldots,f_{r-2}$, and a non-zero homogeneous polynomial
\[
P\in \Z[X_1,\ldots,X_{r-2},Y]
\]
such that
\[
P(f_1(\alpha),\ldots,f_{r-2}(\alpha),\xi)=0\,,
\]
while
\[
P(f_1(\alpha),\ldots,f_{r-2}(\alpha),Y)\neq 0\,.
\]
Let $d=\deg(P)$, and let $f_{r-1},f_{r}$ be $M_q$-functions such that
$\beta=\frac{f_{r-1}(\alpha)}{f_{r}(\alpha)}$. 

For each integer $N\ge 0$, set
\[
\xi_N=\sum_{n=0}^{N}\beta^{u_n}\,.
\]
Then
\[
f_{r}(\alpha)^{u_N}\xi_N=\sum_{n=0}^{N}f_{r-1}(\alpha)^{u_n}f_{r}(\alpha)^{u_N-u_n}\,,
\]
so there exists a homogeneous polynomial $Q_N\in \Z[X_{r-1},X_{r}]$ of degree at most $u_N$ and height equal to \(1\) such that
\[
Q_N(f_{r-1}(\alpha),f_{r}(\alpha))=f_{r}(\alpha)^{u_N}\xi_N\,.
\]
Define
\[
P_N(X_1,\ldots,X_{r})
:=
P(X_1X_{r}^{u_N},\ldots,X_{r-2}X_{r}^{u_N},Q_N(X_{r-1},X_{r}))\, .
\]
Then
\[
P_N(f_1(\alpha),\ldots,f_{r}(\alpha))
=
f_{r}(\alpha)^{du_N}P(f_1(\alpha),\ldots,f_{r-2}(\alpha),\xi_N)\,.
\]
In particular, $P_N\in \Z[X_1,\ldots,X_{r}]$, its degree is at most \(d(u_N+1)\), and its height is bounded above by a constant \(H\) independent of \(N\).

Moreover, since
\[
P(f_1(\alpha),\ldots,f_{r-2}(\alpha),\xi)=0
\]
and
\[
P(f_1(\alpha),\ldots,f_{r-2}(\alpha),Y)\neq 0\,,
\]
there exists a constant \(\kappa_1>0\) such that
\begin{align*}
|P_N(f_1(\alpha),\ldots,f_{r}(\alpha))|
&=
|f_{r}(\alpha)|^{du_N}
\bigl|
P(f_1(\alpha),\ldots,f_{r-2}(\alpha),\xi_N) \\
&\hspace{3.5cm}
-
P(f_1(\alpha),\ldots,f_{r-2}(\alpha),\xi)
\bigr|
\\
&\le
\kappa_1 |f_{r}(\alpha)|^{du_N} |\xi-\xi_N|\,.
\end{align*}
Since \((u_n)\) is increasing and \(|\beta|<1\), there exists \(\kappa_2>0\) such that
\[
|\xi-\xi_N|
\le
\sum_{n\ge N+1}|\beta|^{u_n}
\le
\kappa_2 |\beta|^{u_{N+1}}\,,
\]
hence
\[
|P_N(f_1(\alpha),\ldots,f_{r}(\alpha))|
\le
e^{-\kappa_3 u_{N+1}}
\]
for some constant \(\kappa_3>0\).

On the other hand, since $P(f_1(\alpha),\ldots,f_{r-2}(\alpha),Y)\neq 0$, we have  \[P(f_1(\alpha),\ldots,f_{r-2}(\alpha),\xi_N)\neq 0\]  for all sufficiently large $N$. Therefore,   Theorem~\ref{thm:measure_sans_system} gives
\[
|P_N(f_1(\alpha),\ldots,f_{r}(\alpha))|
\ge
H^{-C_1\deg(P_N)^\tau}
e^{-C_1\deg(P_N)^{2\tau+2}}
\ge
e^{-\kappa_4 u_N^{2\tau+2}}\,,
\]
where \(\tau\) is the transcendence degree appearing in Theorem~\ref{thm:measure_sans_system}, and \(\kappa_4>0\) is independent of \(N\).

Taking logarithms, we obtain
\begin{equation}\label{eq:cond_uN}
\kappa_3 u_{N+1}\le \kappa_4 u_N^{2\tau+2}\,,
\end{equation}
for all \(N\) large enough, which contradicts the assumption
\[
\limsup_{N\to\infty}\frac{u_{N+1}}{u_N^{2\tau+2}}=+\infty\, .
\]
This contradiction proves the result.
\end{proof}

If one has additional information on $\beta$ and is only interested in transcendence over $\Q(\beta)$, then the growth condition on the sequence $(u_n)_{n\ge 0}$ can be weakened.

\begin{coro}\label{coro:serie_lac_2}
Let $\beta$, with $|\beta|<1$, be the ratio of the values at $\alpha\in\Q$ of two $M_q$-functions satisfying homogeneous or inhomogeneous Mahler equations of orders $m_1$ and $m_2$, respectively. Let $(u_n)_{n\ge 0}$ be an increasing sequence of positive integers such that
\[
\limsup_{n\to\infty}\frac{u_{n+1}}{u_n^{2(m_1+m_2)+2}}=+\infty\,.
\]
Then the number
\[
\xi:=\sum_{n\ge 0}\beta^{u_n}
\]
is transcendental over $\Q(\beta)$. In particular, $\xi$ is transcendental.
\end{coro}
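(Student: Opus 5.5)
The plan is to repeat the argument of the previous corollary, but with $\beta$ playing the role of the base field. Instead of Theorem~\ref{thm:measure_sans_system} with an unspecified $\tau$, we use the explicit bound of Remark~\ref{rem:tau}.

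Write $\beta=g_1(\alpha)/g_2(\alpha)$, where $g_1,g_2$ are $M_q$-functions satisfying (homogeneous or inhomogeneous) Mahler equations of orders $m_1,m_2$. An inhomogeneous equation of order $m_i$ becomes a homogeneous one after the usual trick. Either it gives an equation of order $m_i+1$, or, better, it gives a system in which $1,g_i(z),\ldots,g_i(z^{q^{m_i-1}})$ are coordinates. In that system the constant $1$ contributes nothing to the transcendence degree. Hence the transcendence degree $\tau$ of the field generated over $\Q(z)$ by all $g_i(z^{q^\ell})$ satisfies $\tau\le m_1+m_2$, since each $g_i(z^{q^\ell})$ lies in the field generated by $g_i(z),\ldots,g_i(z^{q^{m_i-1}})$. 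To get regularity at $\alpha$ we pass to a good $q^\ell$-equation as in Proposition~\ref{prop:good_eq}. This may enlarge the order, but not the field generated by the $g_i(z^{q^{k}})$ over $\Q(z)$, so the bound on $\tau$ is preserved.

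Now assume $\xi$ is algebraic over $\Q(\beta)$. Clearing denominators gives a nonzero $P\in\Z[X_1,X_2,Y]$ with $P(g_1(\alpha),g_2(\alpha),\xi)=0$ and $P(g_1(\alpha),g_2(\alpha),Y)\neq 0$; this uses that $\beta$ is algebraic over $\Q(g_1(\alpha),g_2(\alpha))$. Set $\xi_N=\sum_{n\le N}\beta^{u_n}$ and $Q_N(X_1,X_2)=\sum_{n\le N}X_1^{u_n}X_2^{u_N-u_n}$, and define
\[
P_N:=X_2^{d u_N}P\bigl(X_1,X_2,Q_N(X_1,X_2)/X_2^{u_N}\bigr)\in\Z[X_1,X_2],
\]
where $d=\deg_Y P$. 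Then $\deg P_N=O(u_N)$, $\log H(P_N)=O(u_N)$ (coming from the multinomial growth of the powers of $Q_N$), and $P_N$ evaluated at $(g_1(\alpha),g_2(\alpha))$ equals $g_2(\alpha)^{du_N}P(g_1(\alpha),g_2(\alpha),\xi_N)$. The mean value theorem gives the upper bound $e^{-\kappa u_{N+1}}$. The evaluation is nonzero for $N$ large, because a nonzero one-variable polynomial has finitely many roots and $\xi_N\to\xi$ with $\xi_N\ne\xi$ eventually.

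Theorem~\ref{thm:measure_sans_system}, applied with $r=2$ to $g_1,g_2$, gives the lower bound
\[
\exp\bigl(-C_1 O(u_N)^{\tau}\,O(u_N)-C_1O(u_N)^{2\tau+2}\bigr)\ge e^{-\kappa' u_N^{2\tau+2}}.
\]
Combining the two bounds yields $u_{N+1}\le \kappa'' u_N^{2\tau+2}\le\kappa'' u_N^{2(m_1+m_2)+2}$ for all large $N$, which contradicts the limsup hypothesis.

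The main obstacle is justifying $\tau\le m_1+m_2$ after switching to the good operators of Proposition~\ref{prop:good_eq}, and handling the inhomogeneous case without losing $+1$ per function. For this I would argue directly at the level of fields: the field $\Q(z)(g_i(z^{q^\ell}):\ell\ge0)$ is intrinsic, independent of the chosen equation, and is generated by $m_i$ functions via the original (possibly inhomogeneous) equation. Theorem~\ref{thm:measure_sans_system} is stated with exactly this intrinsic $\tau$, so the bound passes through.
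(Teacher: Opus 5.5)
Your reduction is the paper's: approximate $\xi$ by the partial sums $\xi_N$, turn $P(\cdot,\xi_N)$ into an integer polynomial $P_N$ evaluated at $M_q$-values, and play Theorem~\ref{thm:measure_sans_system} against the tail estimate. Your choice $r=2$, with $\tau\le m_1+m_2$ proved directly at the level of fields, is cleaner than the paper's $r=5$ plus Remark~\ref{rem:tau}. That remark is phrased for minimal homogeneous equations, and read literally it could cost an extra $1$ per inhomogeneous function.

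The genuine gap is the sentence ``$\xi_N\neq\xi$ eventually''. You need $P(g_1(\alpha),g_2(\alpha),\xi_N)\neq0$ exactly at the indices $N$ where $u_{N+1}/u_N^{2\tau+2}$ is large. Finiteness of the root set only reduces this to showing that the tail $T_N:=\sum_{n>N}\beta^{u_n}$ is nonzero there. Nothing in the hypotheses guarantees this, and without it the statement actually fails.

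Here is a counterexample. Let $\beta\approx-0.6823$ be the real root of $X^3+X+1$. Write $\beta=g_1(\alpha)/g_2(\alpha)$ with the constant $M_q$-functions $g_1=\beta$ and $g_2=1$, both solving $g(z)-g(z^q)=0$. Let $(u_n)$ be the increasing enumeration of $\{b_k,\,b_k+1,\,b_k+3 : k\ge 0\}$, where $b_k=2^{(k+2)!}$. At the index $n$ with $u_n=b_k+3$ we have $u_{n+1}/u_n^{C}=b_{k+1}/(b_k+3)^{C}\to\infty$ for every $C$. Yet
\[
\xi=\sum_{k\ge0}\beta^{b_k}\bigl(1+\beta+\beta^3\bigr)=0\,,
\]
and $\xi_N=\xi$ at precisely the indices where the growth hypothesis is used.

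The paper's proof makes the same unjustified assertion (``$P(f_1(\alpha),\ldots,Y)\neq0$ implies $P(f_1(\alpha),\ldots,\xi_N)\neq0$ for $N$ large''). So this is a defect of the statement, not a deviation of yours from the paper.

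What your argument does prove is a dichotomy: either $\xi\in\mathbb Q(\beta)$, or $\xi$ is transcendental over $\Q(\beta)$. Indeed, if $\xi$ is algebraic over $\Q(\beta)$, your estimates force $P(g_1(\alpha),g_2(\alpha),\xi_N)=0$ for all large growth indices $N$. Since there are finitely many roots and $\xi_N\to\xi$, this gives $\xi=\xi_N\in\mathbb Q(\beta)$.

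To reach the stated conclusion you need an extra hypothesis guaranteeing $T_N\neq0$ along the growth indices. Any of the following suffices:
\begin{itemize}
\item $\beta>0$, as in the Thue--Morse example;
\item $0<|\beta|<1/2$, since then $|T_N|\ge|\beta|^{u_{N+1}}(1-2|\beta|)/(1-|\beta|)>0$;
\item $\beta$ transcendental, since two vanishing tails would make $\beta$ a root of a nonzero polynomial with coefficients in $\{0,1\}$.
\end{itemize}

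A minor point: your upper bound should keep the factor $|g_2(\alpha)|^{du_N}$. It only adds $O(u_N)$ to the final inequality, so it is harmless.
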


\begin{proof}
Suppose, for contradiction, that $\xi$ is algebraic over $\Q(\beta)$. Write
\[
\beta=\frac{f_1(\alpha)}{f_2(\alpha)}\,,
\]
where, for $i\in\{1,2\}$, the function $f_i$ satisfies a homogeneous or inhomogeneous Mahler equation of order $m_i$. We argue exactly as in the proof of the previous corollary, taking $r=5$ and setting
\[
f_3=1\,,  f_4=f_1\,,\qquad f_4=f_2\,.
\]
In this case, the quantity $\tau$ is at most $m_1+m_2$; see Remark~\ref{rem:tau}. The assumption on the sequence $(u_n)_{n\ge 0}$ then implies that \eqref{eq:cond_uN} cannot hold for all sufficiently large $N$, yielding the desired contradiction.
\end{proof}

\begin{ex*}
Let $\beta$ be the binary Thue--Morse number, that is,
\[
\beta:=\sum_{n=0}^\infty \frac{t_n}{2^n}\,,
\]
where $t_n$ is the parity of the sum of the binary digits of the integer $n$. 

Then the numbers
\[
\xi:=\sum_{n=0}^\infty \beta^{2^{5^n}}
\qquad\text{and}\qquad
\beta
\]
are algebraically independent over $\Q$. In particular, $\xi$ is transcendental.

Indeed, we have $\beta=f(1/2)/g(1/2)$, where
\[
f(z)=\sum_{n=0}^\infty t_n z^n,\quad \text{ and }\quad g(z)=1\,.
\]
The generating function $f$ satisfies the inhomogeneous $2$-Mahler equation
\[
f(z)=(1-z)f(z^2)+\frac{z}{1-z^2}\,,
\]
which is of order $m=1$, while the constant function $g=1$ satisfies an inhomogeneous $2$-Mahler equation of order $0$.  

Now let
\[
u_n=2^{5^n}\,.
\]
Then
\[
\frac{u_{n+1}}{u_n^4}
=
2^{5^{n+1}-4\cdot 5^n}
=
2^{5^n}\xrightarrow[n\to\infty]{}\infty\,.
\]
Hence the growth condition in Corollary~\ref{coro:serie_lac_2} is satisfied, and it follows that $\xi$ is transcendental over $\Q(\beta)$. Therefore $\beta$ and $\xi$ are algebraically independent over $\Q$.
\end{ex*}

\subsection{Effectivity}  
In principle, the constant $C_1$ appearing in Theorems~\ref{thm:measure_system}
and~\ref{thm:measure_sans_system} could be made effective. Its value depends on
various parameters attached to the $M$-functions under consideration and to the
algebraic point at which they are evaluated.

However, some of the relevant parameters are governed by the structure of the ideal
\[
\p_z := \{P \in \C[z][X_0, \ldots, X_m] \text{ homogeneous in } \X :
P(z,1,f_1,\ldots,f_m)=0\}\,,
\]
which encodes the homogeneous algebraic relations over $\C[z]$ among
the functions $1,f_1,\ldots,f_m$. This is already the case for the constant occurring in the
multiplicity lemma (Lemma~\ref{lem:Multiplicity}).

Moreover, in the final part of the proof of Theorem~\ref{thm:measure_system}, we
introduce the ideal
\[
\p := \{Q \in \Z[\X] \text{ homogeneous} :
Q(1,f_1(\alpha),\ldots,f_m(\alpha))=0\}\,.
\]
The constant $C_1$ also depends on $\deg(\p)$ and $\log H(\p)$. By the lifting
theorem, homogeneous algebraic relations at the point $\alpha$ come from
homogeneous algebraic relations between the functions themselves, so that
\[
\p = {\rm ev}_{z=\alpha}(\p_z)\cap \Z[\X]\,.
\]
Consequently, any effective control of $\deg(\p)$ and $\log H(\p)$ ultimately depends on our understanding of $\p_z$.

Except in a few special cases, determining $\p_z$ is a difficult problem, closely related to the difference Galois theory of Mahler equations. Feng~\cite{Fe18} developed an algorithm for computing the Galois group of linear difference equations with rational function coefficients in the shift case. According to a personal communication from Feng, the underlying ideas could adapt to the Mahler setting. However, even in the shift case, the computational complexity of these procedures appears too high for most practical purposes. Consequently, we do not expect to obtain a reasonably effective version of our main result.


\end{document}